\documentclass[12pt, reqno]{amsart}
\usepackage[reqno]{amsmath}
\usepackage{amscd,amsthm,amssymb,graphics}
\usepackage{amsfonts,amssymb,amscd,amsmath,enumitem,verbatim}
\setlist[enumerate,1]{ label=(\arabic*), ref=(\arabic*) }
\usepackage{microtype}
\usepackage[a4paper,top=2cm,left=1.5cm,right=1.5cm]{geometry}
\usepackage{xcolor}
\usepackage{csquotes}
\theoremstyle{plain}
\usepackage{booktabs}
\usepackage{hyperref}
\newcommand{\arXiv}[1]{\href{https://arxiv.org/abs/#1}{arXiv:#1}}

\newtheoremstyle{named}
{3pt}{3pt}
{\itshape}{}
{\bfseries}{.}{.5em}
{#3}
\theoremstyle{named}
\newtheorem*{named*}{}
\theoremstyle{plain}
\newtheorem*{theorem*}{Theorem}
\newtheorem{theorem}{Theorem}[section]
\newtheorem{lemma}[theorem]{Lemma}

\newtheorem{proposition}[theorem]{Proposition}
\newtheorem{conjecture}[theorem]{Conjecture}
\newtheorem{question}[theorem]{Question}
\theoremstyle{definition}
\newtheorem{definition}[theorem]{Definition}
\newtheorem{remark}[theorem]{Remark}
\newtheorem{observation}[theorem]{Observation}
\newtheorem{example}[theorem]{Example}
\newtheorem{algorithm}[theorem]{Algorithm}

\newcommand{\Z}{\mathbb{Z}}
\newcommand{\Q}{\mathbb{Q}}
\newcommand{\R}{\mathbb{R}}
\newcommand{\C}{\mathbb{C}}
\newcommand{\co}{\mathcal{O}}
\newcommand{\OK}[1][K]{\co_{{\:\!\! #1}}}
\newcommand{\OKPlus}[1][K]{\OK[#1]^+}
\newcommand{\U}{\mathcal{U}}

\newcommand{\CC}{\mathcal{C}}
\newcommand{\Ccl}{\CC^{\mathrm{cl}}}
\newcommand{\Cdiag}{\CC^{\mathrm{diag}}}
\newcommand{\Cfr}{\CC^{\mathrm{fr}}}
\newcommand{\cl}{\mathrm{cl}}
\newcommand{\diag}{\mathrm{diag}}

\newcommand{\ve}{\varepsilon}
\newcommand{\vectorstyle}[1]{\mathbf{#1}}
\newcommand{\vv}{\vectorstyle{v}}
\newcommand{\ww}{\vectorstyle{w}}
\newcommand{\ee}{\vectorstyle{e}}
\newcommand{\G}{\mathbf{G}} 
\newcommand{\p}{\mathfrak{p}}
\newcommand{\qf}[1]{\langle #1 \rangle}

\newcommand{\OO}{\OKPlus/\U_K^2}
\DeclareMathOperator{\Tr}{Tr}
\DeclareMathOperator{\NN}{N}
\DeclareMathOperator{\rank}{rank}
\DeclareMathOperator{\truant}{truant}
\newcommand{\Srtruant}{\text{$S_\rho$-\;\!\!}\truant} 
\newcommand{\abs}[1]{\mathopen|#1\mathclose|}
\newcommand{\amax}{\alpha_{\mathrm{max}}}
\newcommand{\qq}[1]{\Q(\!\sqrt{#1})} 
\newcommand{\boundII}{{250\,000}} 
\newcommand{\boundIII}{{250\,000}} 
\newcommand{\boundHuge}{{1\,000\,000}} 

\title{Escalations and criteria over real quadratic fields}

\author{Jakub Kr\'asensk\'y}
\address{Czech Technical University in Prague, Faculty of Information Technology, Department of Applied Mathematics, Thákurova~9, 160~00 Praha~6, Czech Republic}
\email{jakub.krasensky@fit.cvut.cz}

\author{Giuliano Romeo}
\address{Charles University, Faculty of Mathematics and
	Physics, Department of Algebra, Sokolovská 83, 186 00 Praha 8, Czech Republic, \normalfont{and}}
\address{Department of Mathematical Sciences, Politecnico di Torino, Corso Duca degli Abruzzi 24, 10129, Torino, Italy}
\email{romeo.giuliano@matfyz.cuni.cz, giuliano.romeo@polito.it}

\thanks{}

\keywords{universal quadratic form, quadratic lattice, criterion set, escalation, 15-Theorem}
\subjclass[2020]{11E12, 11E20, 11R04, 11R11, 11R80}

\begin{document}
	\begin{abstract}
		The famous 15-Theorem and 290-Theorem fully characterise universal quadratic forms over $\Q$. Similar theorems exist for every totally real number field, but only over $\qq{5}$ the criterion set is explicitly known. We study criterion sets both theoretically and computationally: We develop the method of escalation over number fields, thus providing a simple proof of finiteness of the criteria and, more importantly, a practical tool for computing them. We illustrate this by explicit computations for $\qq{2}$ and $\qq{3}$, obtaining a conjecture about the corresponding universality criteria; we also prove universality of many escalator lattices. Moreover, we develop the somewhat different theory of escalations for diagonal quadratic forms, obtaining the diagonal criterion set for $\qq{5}$ and conjecturally for $\qq{2}$ and $\qq{3}$.
	\end{abstract}
	
	\maketitle
	
	\section{Introduction}
	The study of quadratic forms with integer coefficients goes back to ancient times before Pythagoras. One of the most natural questions is which integers are represented by a given quadratic form. In 1770, Lagrange proved that the quadratic form $x^2+y^2+z^2+t^2$ represents every positive integer. Nowadays, such a form is called \emph{universal}. Despite the long history, involving even the great Ramanujan \cite{Ra}, the full characterisation of universal quadratic forms over $\Z$ is quite recent -- it was provided by the 15-Theorem of Conway--Schneeberger for classical forms and by the 290-Theorem of Bhargava--Hanke in full generality \cite{Bh,BH}. A \emph{classical} form is a quadratic form that has even coefficients in the cross-terms; it is a natural special case as these forms correspond to integral matrices. 
	
	\begin{named*}[{\bfseries 15-Theorem \normalfont(Conway--Schneeberger)}]
		Let $\varphi$ be a classical positive definite quadratic form over $\Q$. Then:
		\begin{enumerate}
			\item $\varphi$ is universal if and only if $\varphi$ represents all elements of the set $\Ccl_{\Q}=\{1, 2, 3, 5, 6, 7, 10, 14, 15\}$.\label{it:15-1}
			\item Moreover, for every $n \in \Ccl_{\Q}$, there exists a classical positive definite quadratic form over $\Q$ that represents all positive integers except of $n$.\label{it:15-2}
		\end{enumerate}
	\end{named*}
	
	Any set $\CC$ satisfying condition \ref*{it:15-1} is called a \emph{criterion set}. Moreover, condition \ref*{it:15-2} means that $\Ccl_{\Q}$ is the unique minimal criterion set in a very strong sense -- namely, a subset of $\Z^+$ is a criterion set if and only if it contains $\Ccl_{\Q}$. In general, an \emph{$S$-criterion set} is a subset $\CC$ of a given set $S$ such that the representation of all elements in $\CC$ is a sufficient condition for the representation of all elements in $S$. The main focus of the present paper is the study and explicit construction of ($S$-)criterion sets over number fields, with particular attention to quadratic fields. In this setting, a form is \emph{universal} if it represents all totally positive algebraic integers inside a totally real number field $K$, i.e.\ all elements of the ring of integers $\OK$ that are positive with respect to all embeddings of $K$ into $\R$.
	
	Universal quadratic forms can be constructed over all totally real number fields thanks to the asymptotic result of Hsia--Kitaoka--Kneser \cite{HKK}. However, it is typically not known what is the minimal rank of a universal quadratic form over a given number field $K$. Siegel \cite{Si} proved that the only fields where a sum of squares is universal are $K=\Q$, where $4$ squares are enough by Lagrange's four-square theorem, and $K=\qq{5}$, where Maa{\ss} \cite{M} proved that the sum of $3$ squares is universal. However, if arbitrary quadratic forms (rather than just sums of squares) are allowed, the situation becomes much more complicated. On the one hand, there are many results indicating that if a number of variables $n$ is fixed, there will be many fields admitting no universal quadratic form in $n$ variables \cite{BK1, BK2, Ka1, Ka3, KS, KT, KYZ, Man, Ya}. On the other hand, although Kitaoka conjectured that there are only finitely many number fields admitting a universal ternary quadratic form, and his conjecture inspired several papers \cite{CKR, EK, KK, KKK, KTZ, KY, KY-EvenBetter}, the full conjecture is far from resolved. A strong result in this direction is \cite{KKP} (and its more explicit version \cite{KK+}) which shows that there are only finitely many quadratic fields admitting a universal form in $7$ variables; this result is tight, as \cite{Ki2} constructs infinitely many real quadratic fields where a form in $8$ variables is universal (moreover, this form is diagonal, i.e.\ without cross-terms). There are many other relevant results on universal quadratic forms over totally real number fields; we recommend the surveys \cite{Ka-survey, Ki-survey}.
	
	\smallskip
	
	Let us return to criterion sets. First of all, let us explicitly state the other two variants of the 15-Theorem for different classes of quadratic forms. 
	
	\begin{named*}[{\bfseries 290-Theorem \normalfont(Bhargava--Hanke)}]
		The 15-Theorem remains valid if both occurrences of \enquote{classical} are removed and $\Ccl_{\Q}$ is replaced by
		\[
		\begin{aligned} \CC_{\Q} ={}& \{1, 2, 3, 5, 6, 7, 10, 13, 14, 15, 17, 19, 21, 22, 23, 26,\\ & 29, 30, 31, 34, 35, 37, 42, 58, 93, 110, 145, 203, 290\}. \end{aligned} \]
	\end{named*}
	
	\begin{named*}[{\bfseries Diagonal 15-Theorem \normalfont(Conway--Schneeberger)}] 
		The 15-Theorem remains valid if both occurrences of \enquote{classical} are replaced by \enquote{diagonal}. The criterion set does not change: $\Cdiag_\Q=\Ccl_\Q$.
	\end{named*}
	
	The $S$-criterion sets for various $S\subset \Z^+$ have been studied by Bhargava and others. According to \cite[Sec.~6]{Ki-survey} or \cite[p.~674]{Ha}, Bhargava proved the following two statements: 1) A classical quadratic form represents all primes if and only if it represents all of $\Ccl_{\mathbb{P}}= \{\text{primes up to $47$}\}\cup\{67,73\}$.
	2) The same holds for the set of all odd positive integers and $\Ccl_{\mathrm{odd}}=\{1, 3, 5, 7, 11, 15, 33\}$. The corresponding results for not-necessarily-classical quadratic forms are known only conjecturally \cite{Ja,Kap} or under GRH \cite{Ro} in the odd-universality case. In fact, it appears \cite[Sec.~4]{DW} that Bhargava never published a proof of these claims; statement 1) is proven in \cite{Ro} (see also \cite[Thm.~1.5]{BC}), while a proof of 2) for diagonal forms has recently been given in \cite{DW,JKKKO}. For other choices of $S$, see for example \cite{BC,De,DR}.
	
	An interesting and fruitful line of research \cite{EKK, KKO, KLO, Kom, Kom2, Oh} arises when we allow $S$ to consist not of numbers but of quadratic forms of bounded rank. The existence of finite $S$-criterion sets in this situation has been proven by Kim--Kim--Oh \cite{KKO2} over $\Z$ and, more recently, by Chan--Oh \cite{CO} for number fields. However, even over $\Z$, such criteria are usually not unique.
	
	Criterion sets have also been studied in other contexts such as forms over local fields \cite{Be1,Be2,HH,HHX,XZ}, Hermitian forms \cite{KKP0} or sums of polygonal numbers \cite{JuK, KaL}; they do not exist for indefinite quadratic forms \cite{XZ} and for higher degree forms \cite{KP}.
	
	Let $S\subset \OKPlus$ where $\OKPlus$ consists of all totally positive integers in a totally real number field $K$. The existence of a finite $S$-criterion set follows from the more general theorem of Chan--Oh \cite{CO}. However, the only explicit construction has been carried out by Lee \cite{Le}:
	
	\begin{named*}[{\bfseries Norm-45-Theorem \normalfont(Lee)}]
		Let $\varphi$ be a classical positive definite quadratic form over $\qq{5}$. Then $\varphi$ is universal if and only if $\varphi$ represents all elements of the set
		\[
		\Ccl_{\qq{5}}=\Bigl\{1,2,\frac{5+\sqrt5}{2}, \frac{7\pm\sqrt5}{2},2\cdot\frac{5+\sqrt5}{2},3\cdot\frac{5+\sqrt5}{2}\Bigr\}.
		\]
	\end{named*}
	
	The ad hoc name we gave to the theorem is due to the weaker reformulation that $\varphi$ is universal if and only if it represents all totally positive elements with norm at most $45$. Note that in the original paper \cite{Le}, there is no mention of minimality or uniqueness of this set, much less a strong condition like \ref*{it:15-2} from the 15-Theorem. We remedy this in Theorem \ref{th:sq5unique}.
	
	\smallskip
	
	In order to formulate the rest of the Introduction correctly, we switch from the algebraic language of quadratic forms to the more geometric point of view, namely to quadratic lattices. The definitions are to be found in Section \ref{se:prelims}; for now, it is enough to imagine that quadratic lattices are exactly the same as quadratic forms. This simplified point of view is in fact correct in fields of class number $1$ such as $\Q$ or $\qq{D}$ for $D=2,3,5$ that are used in most of our examples.
	
	\smallskip
	
	In \cite{KKR}, Kala and both present authors characterised criterion sets over number fields. They proved that minimal criterion sets are unique and consist exactly of the \emph{critical elements}, i.e.\ all those $\alpha$ for which there exists a lattice representing everything except $\alpha$ (up to multiplication by squares of units); compare this with condition \ref*{it:15-2} from the 15-Theorem.
	
	Beside its theoretical implications, this result allows to decide whether a given element belongs to the criterion set (and thus, in principle, to compute the criterion set up to any given norm), since \cite[Prop.~3.1]{KKR} also provides a simpler characterisation of critical elements: $\alpha$ is critical if and only if there exists a lattice representing all elements of smaller norm, but not $\alpha$ itself. It was mentioned that this can be turned into a finite procedure for computing critical elements, but the details were left unexplained in \cite{KKR}; we remedy this in the present paper in Section \ref{se:finite}. Thus we provide a sound basis for studying criteria computationally.
	
	We then apply the theory developed here and in \cite{KKR} for computing both classical and diagonal criterion sets in $\qq{D}$ for $D=2,3,5$. The aim is to show how to practically construct criterion sets for universality over number fields and to obtain the analogue of the 15-Theorem in this setting.
	
	Section \ref{se:finite} develops the theory of escalations over totally real number fields, providing algorithms for deciding whether a given element is critical and for computation of the criterion set. One by-product is a simple independent proof of the existence of a finite $S$-criterion set for any given $S\subset\OKPlus$ (Theorem \ref{th:finite}, Remark \ref{re:finite}). In Section \ref{se:diagonal}, we analyse escalations for diagonal quadratic forms and show how the theory must be modified in that case. In Section \ref{se:sqrt2}, we construct the classical criterion set for $K=\qq{2}$. According to our Conjecture \ref{co:mainsqrt2}, the minimal classical criterion set over $\qq{2}$ is
	\[
	\Ccl_{\qq{2}} = \bigl\{1, 2+\sqrt2, 3, 3(2+\sqrt2), 3(3\pm\sqrt2)\bigr\}.
	\]
	The actual result is Theorem \ref{th:sqrt2main}: we know that none of the six listed elements can be omitted and that any lattice representing all of them represents all totally positive integers of $\qq{2}$ up to norm $\boundII{}$ (hence, hopefully, all). We also provide a conditional proof of Conjecture \ref{co:mainsqrt2} that relies on the conjectural universality of only $12$ quaternary lattices and on assuming that another quaternary lattice represents all elements except one (see Conjecture \ref{co:13lattices}). This requires significant work; in particular, Proposition \ref{pr:6universality} proves universality of all escalations of $x^2 + (2+\sqrt2)y^2+2z^2$.
	
	Section \ref{se:sqrt3} studies the criterion set over $\qq{3}$; the main result is Theorem \ref{th:sqrt3}, saying that
	\[
	\Ccl_{\qq{3}} \supset \Cdiag_{\qq{3}} \supset \bigl\{1, \ve, 4\pm\sqrt3, \ve(4\pm\sqrt3), 7\pm2\sqrt3, \ve(7\pm2\sqrt3)\bigr\},
	\]
	where $\ve=2+\sqrt{3}$, and the Conjecture \ref{co:sqrt3} that both inclusions hold as equalities. Again, this was checked for all elements up to norm $\boundIII{}$, and proved conditionally, assuming the knowledge of elements represented by one ternary and five quaternary lattices. Further, in Section \ref{se:diagExplicit}, we prove that  
	\[
	\Cdiag_{\qq{5}} = \Bigl\{1,2,\frac{5+\sqrt5}{2}, \frac{7\pm\sqrt5}{2},2\cdot\frac{5+\sqrt5}{2}\Bigr\},
	\]
	and conditionally (assuming the universality of three quaternary diagonal forms) also
	\[
	\Cdiag_{\qq{2}} = \bigl\{1,2+\sqrt2,3,3(2+\sqrt2)\bigr\},
	\]
	see Theorem \ref{th:diagonal}. Finally, Section \ref{se:amax} discusses the curious behaviour of the largest element of the criterion set (see Proposition \ref{pr:non-con}) and compares it with the known results over rational integers.

	\section{Preliminaries and notation} \label{se:prelims}
	
	In this section, we collect the necessary definitions, notation and facts that we shall need throughout the paper. Most of the contents of this section is highly standard (see e.g.\ \cite{OM}), only in the last subsection we repeat the most important results on criterion sets over number fields from \cite{KKR}.
	
	\subsection*{Number fields}
	
	Throughout the paper, $K$ is a totally real number field, i.e.\ a field such that all embeddings $K \hookrightarrow \C$ in fact map $K$ into $\R$, and $\OK$ is its ring of integers. We write $\alpha \succeq \beta$ if $\sigma(\alpha) \geq \sigma(\beta)$ in all embeddings $\sigma$, and $\alpha \succ \beta$ if the inequalities are strict. If $\alpha \succ 0$, we call it \emph{totally positive}; we put $\OKPlus = \{\alpha\in\OK \mid \alpha \succ 0\}$. All nonzero squares are totally positive; in particular, for the set of squares of units, we have $\U_K^2 \subset \OKPlus$.
	
	We use the standard definitions of the \emph{norm} $\NN: K \to \Q$ and \emph{trace} $\Tr: K \to \Q$. Clearly, the norm and the trace of $\alpha\in\OKPlus$ is positive, and, more generally, it follows directly from the definitions that if $\alpha\succeq \beta$ for $\alpha,\beta\in\OKPlus$, then $\NN(\alpha)\geq\NN(\beta)$ and $\Tr(\alpha)\geq\Tr(\beta)$. In particular, given any $\alpha$, there are only finitely many such $\beta$. This is due to the fact that in a given degree, there are only finitely many algebraic integers $\beta \succ 0$ with $\Tr(\beta)$ under a given constant. It is also important to note there are only finitely many elements of $\OO$ with norm under a given constant.
	
	We say that $\alpha \in \OKPlus$ is \emph{indecomposable} if it cannot be written as $\beta+\gamma$ with $\beta,\gamma \in \OKPlus$. We call $\alpha\in\OK$ \emph{squarefree} if it is not divisible by any nontrivial square, i.e.\ if $\alpha/\omega^2 \in \OK$ for $\omega\in\OK$ implies that $\omega$ is a unit. 
	We write $\alpha \equiv \beta \pmod{\delta}$ if $\alpha-\beta \in \delta\OK$.
	If $K$ is a quadratic field, we write $\overline{\alpha}$ for the conjugate of $\alpha$.

	\subsection*{Quadratic forms and lattices}
	
	A \emph{quadratic form} of rank $n$ is a homogeneous polynomial of degree $2$ in $n$ variables. Implicitly we assume all quadratic forms to be integral, i.e.\ to map $\OK^{\;\!n} \to \OK$, and \emph{totally positive definite}, i.e.\ to map every nonzero vector to $\OKPlus$. A quadratic form is called \emph{classical} if all cross-terms are in $2\OK$. In particular, all \emph{diagonal} quadratic forms (those without cross-terms) are classical. We write $\qf{\alpha_1,\ldots,\alpha_n}$ for $\alpha_1x_1^2+\cdots+\alpha_nx_n^2$. We consider the \enquote{empty diagonal form} $\qf{}$, which is a trivial mapping that sends the zero vector to zero, to be a diagonal form in our statements.
	
	However, except for sections on diagonal forms, we prefer another way of looking at quadratic forms: A \emph{quadratic lattice} is a tuple $(L,Q)$ where $L$ is a finitely generated torsion-free $\OK$-module and $Q: L \to \OK$ is a \emph{quadratic map}; this means that $Q(\alpha\vv)=\alpha^2Q(\vv)$ for all $\alpha\in\OK$ and $\vv\in L$, and that the symmetric map $B_Q$ defined by $B_Q(\vv,\ww) = \frac12\bigl(Q(\vv+\ww)-Q(\vv)-Q(\ww)\bigr)$ is bilinear. If $B_Q$ takes only values in $\OK$, we call the quadratic lattice $(L,Q)$ \emph{classical}; generally it can take values in $\frac12\OK$. Note that this is the only place where our terminology differs from that of the most standard source: What we call a quadratic lattice, would be an \enquote{integral quadratic lattice} in O'Meara's book \cite{OM}. A quadratic lattice is \emph{totally positive definite} if $Q(\vv) \succ 0$ for all nonzero $\vv$.
	
	From now on, we usually say just \emph{lattice} instead of \enquote{totally positive definite quadratic lattice}; we shall not encounter any other lattices in this article. Moreover, in Sections \ref{se:sqrt2} and \ref{se:sqrt3}, all lattices are assumed to be classical (but this is of course repeated at the beginning of those sections). We also almost always write just $L$ and not $(L,Q)$ for the lattice; if not specified otherwise, the quadratic map is automatically assumed to be called $Q$.
	
	The \emph{orthogonal sum} of two lattices, written as $L_1\perp L_2$, is the direct sum of the modules $L_1\oplus L_2$ equipped with the map $Q_1+Q_2$, where $Q_i$ is the map corresponding to $L_i$. We also sometimes meet lattices scaled by some $\alpha\in\OKPlus$; this means the lattice $(L,\alpha Q)$. Note that we scale the quadratic map and not the vectors (which would also be possible, but we shall not need it).
	
	If $\sigma: K \to K$ is a field automorphism (in particular the conjugation on a quadratic field), then the \emph{conjugate lattice} of $L$ is $\bigl(\sigma(L),\sigma(Q)\bigr)$, where $\sigma(Q)$ acts by $\vv \mapsto \sigma\bigl(Q(\vv)\bigr)$, and the module $\sigma(L)$ can be constructed as follows: The underlying set is $L$, but equipped with a different structure; addition remains the same, but the new multiplication $\cdot_\sigma$ is defined by $\sigma(\alpha) \cdot_\sigma \vv = \alpha \vv$. Fortunately, we do not need this somewhat involved general definition; we shall only meet conjugates of free lattices, and they can be defined much more easily and naturally, see Observation \ref{ob:conjugate}.
	
	\begin{remark}
		There is a minor mistake in the proof of \cite[Cor.~3.5]{KKR}; instead of $\bigl(L,\sigma(L)\bigr)$, one must use the conjugate lattice $\bigl(\sigma(L),\sigma(Q)\bigr)$ defined above.
	\end{remark}
	
	We say that $L$ \emph{represents} $\alpha\in\OKPlus$ if there is some $\vv\in L$ such that $Q(\vv)=\alpha$; we denote this by $\alpha \to L$. A lattice is \emph{universal} if it represents all elements of $\OKPlus$, and \emph{$S$-universal} for some $S\subset \OKPlus$ if it represents all elements of $S$. Note that for any given $\alpha$, a lattice $L$ either represents all elements of $\alpha\U_K^2$, or none of them; thus, we extend the same terminology ($\alpha$ is represented, $L$ is $S$-universal) for $ \alpha\in\OO$ and $S \subset \OO$. Also, elements of $\OO$ have well-defined norms (not traces!), and, in fact, we often implicitly understand elements of $\OK$ as elements of $\OO$; for example, we might say that a lattice \enquote{represents everything except of $3(5+\sqrt5)/2$}, while in fact the full list of non-represented elements is $3(5+\sqrt5)/2 \cdot \varphi^{2k}$, $k\in \Z$, where $\varphi=(1+\sqrt5)/2$ is the fundamental unit.
	
	Lattices $L_1$ and $L_2$ are \emph{isometric} if there exists a linear bijection between them that preserves the quadratic map; this fact is written as $L_1 \simeq L_2$. Lattices are usually studied up to isometry -- if \enquote{there exists a unique lattice} satisfying some condition, it almost always means a unique isometry class.
	
	The \emph{rank} of a lattice, $\rank(L)$, is the rank of the underlying module, i.e.\ the dimension of the vector space $K \otimes L$. Lattices of rank $1$, $2$, $3$ and $4$ are called \emph{unary}, \emph{binary}, \emph{ternary} and \emph{quaternary}; the only lattice of rank $0$ is the \emph{zero lattice} $\{\vectorstyle{0}\}$. We say that $L$ is \emph{free} if the underlying module is free, i.e.\ if it has a basis; in this case the rank is just the number of basis elements. If the class number of the field, $h(K)$, is $1$, then every lattice is free; this is the case for all three quadratic fields $\qq{D}$ with $D=2,3,5$ which we study in depth in this paper. In general, by the structure theorem for finitely generated modules over Dedekind domains, every lattice with $\rank(L)=r$ can be written as $\mathfrak{a}_1^{-1}\vv_1 + \cdots + \mathfrak{a}_r^{-1}\vv_r$ where $\vv_1, \ldots, \vv_r \in L$ (they form a \emph{pseudobasis}) and all $\mathfrak{a}_i \subset \OK$ are ideals.
	
	There is a natural one-to-one correspondence between quadratic forms and free quadratic lattices. More precisely, this correspondence is between classes of quadratic forms up to equivalence (invertible change of variables) and quadratic lattices up to isometry. A quadratic form $Q$ in $n$ variables can of course be viewed as a quadratic lattice $(\OK^{\;\!n},Q)$; on the other hand, if $(L,Q)$ is a free quadratic lattice and $(\vv_1,\ldots, \vv_r)$ its basis, then $Q(x_1\vv_1+ \cdots + x_r\vv_r)$ is a well-defined quadratic form (a polynomial) in variables $x_1, \ldots, x_r$. The quadratic form and the corresponding free quadratic lattice share many properties, so often there is no need to make a distinction between them. For example, we use the notation $\qf{\alpha_1, \ldots, \alpha_n}$ not only for the diagonal form, but also for the corresponding lattice. The empty diagonal form $\qf{}$ of course corresponds to $\{\vectorstyle{0}\}$. 
	
	If $L_1$ and $L_2$ are lattices satisfying $L_1\subset L_2$, then we say that $L_1$ is a \emph{sublattice} of $L_2$ and that $L_2$ is an \emph{overlattice} of $L_1$ or that it \emph{contains} $L_1$. We do \emph{not} require $\rank(L_1)=\rank(L_2)$ here; however, a crucial fact is that every lattice has only finitely many overlattices of the same rank, i.e.\ spanning the same vector space. (This is due to the fact that classical overlattices are contained in $L^{\#}$, the \enquote{dual} of $L$; arbitrary lattices lie in $\frac12 L^{\#}$.)
	
	For $\vv,\ww \in L$, the Cauchy--Schwarz inequality says that $Q(\vv)Q(\ww) \succeq B_Q(\vv,\ww)^2$. If $\vv_1, \ldots, \vv_n \in L$, then we assign to them their \emph{Gram matrix}; this is an $n \times n$ symmetric matrix $\G$ defined by $\G_{ij} = B_Q(\vv_i,\vv_j)$. The Cauchy--Schwarz inequality generalises to the fact that $\G$ is a \emph{totally positive semidefinite} matrix, i.e.\ $\vectorstyle{x}^{\mathrm{T}}\G \vectorstyle{x} \succeq 0$ for every $\vectorstyle{x}\in K^n$. The vectors $(\vv_1,\ldots,\vv_n)$ are linearly independent if and only if $\G$ is totally positive definite, i.e.\ $\vectorstyle{x}^{\mathrm{T}}\G \vectorstyle{x} \succ 0$ for all nonzero $\vectorstyle{x}$, which happens if and only if $\det \G \neq 0$. In this case the lattice $\OK\vv_1 + \cdots + \OK\vv_n$ generated by these vectors is free of rank $n$; we denote it simply by $\qf{\G}$. For example, the lattice corresponding to $2x^2+2xy+7y^2$ is $\bigl\langle\begin{smallmatrix} 2 & 1 \\ 1 & 7  \\ \end{smallmatrix}\bigr\rangle$.
	
	\begin{observation}\label{ob:conjugate}
		Let $L \simeq \qf{\G}$ and let $\sigma$ be an automorphism of $K$. Then the corresponding conjugate lattice of $L$ is isometric to $\qf{\sigma(\G)}$.
		
		Thus, the conjugate of a quadratic form is just the form where all coefficients have been conjugated.
	\end{observation}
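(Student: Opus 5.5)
The plan is to make the isometry explicit. Let $\vv_1,\ldots,\vv_n$ be a basis of $L$ whose Gram matrix is $\G$, so that $L=\OK\vv_1+\cdots+\OK\vv_n$. In the conjugate lattice $\bigl(\sigma(L),\sigma(Q)\bigr)$ the underlying additive group is still $L$. The scalar multiplication is $\sigma(\alpha)\cdot_\sigma\vv=\alpha\vv$, so every element $\sum \alpha_i\vv_i$ can be rewritten as $\sum \sigma(\alpha_i)\cdot_\sigma \vv_i$. Since $\sigma$ is a bijection of $\OK$ onto itself, this shows that $\vv_1,\ldots,\vv_n$ generate $\sigma(L)$ over $\OK$ with respect to $\cdot_\sigma$. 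They are also independent: a relation $\sum\beta_i\cdot_\sigma\vv_i=0$ translates into $\sum\sigma^{-1}(\beta_i)\vv_i=0$ in $L$, which forces every $\beta_i=0$. Hence $\sigma(L)$ is free with the same basis.

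The next step is to compute the new Gram matrix. For this I need the bilinear form attached to $\sigma(Q)$. Since $\sigma$ is additive and fixes $\frac12$, we get
\[
B_{\sigma(Q)}(\vv,\ww)=\tfrac12\bigl(\sigma(Q(\vv+\ww))-\sigma(Q(\vv))-\sigma(Q(\ww))\bigr)=\sigma\bigl(B_Q(\vv,\ww)\bigr).
\]
Evaluating on the basis gives $B_{\sigma(Q)}(\vv_i,\vv_j)=\sigma(\G_{ij})$, so the Gram matrix of $\sigma(L)$ in the basis $(\vv_i)$ is $\sigma(\G)$. Mapping $\vv_i$ to the $i$-th standard generator of $\qf{\sigma(\G)}$ then yields an $\OK$-linear bijection that preserves the quadratic maps. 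It is enough to check this on the basis because both quadratic maps are determined by their Gram matrices, for instance via $Q\bigl(\sum x_i\vv_i\bigr)=\sum x_ix_j\G_{ij}$. We should also confirm that $\sigma(Q)$ is a genuine quadratic map with respect to $\cdot_\sigma$; this holds because $\sigma(Q)\bigl(\sigma(\alpha)\cdot_\sigma\vv\bigr)=\sigma\bigl(\alpha^2Q(\vv)\bigr)=\sigma(\alpha)^2\sigma(Q)(\vv)$. Finally, total positivity is preserved, since $\sigma$ only permutes the real embeddings.

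The statement about quadratic forms then follows directly. Writing the form as $\sum x_ix_j\G_{ij}$, the conjugate form in the coordinates $\sum x_i\cdot_\sigma\vv_i$ is $\sum x_ix_j\sigma(\G_{ij})$, which is the original form with all its coefficients conjugated. The only real obstacle is notational rather than mathematical: one has to keep the twisted scalar multiplication straight, in particular when rewriting elements in terms of the basis.
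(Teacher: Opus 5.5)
Your proposal is correct and is essentially the argument the paper intends. The paper gives no separate proof and treats the observation as a direct consequence of the definition of $\bigl(\sigma(L),\sigma(Q)\bigr)$. Your verification is exactly that check: the same basis $\vv_1,\ldots,\vv_n$ remains a basis under the twisted scalar multiplication, and the computation $\sigma(Q)\bigl(\sum x_i\cdot_\sigma\vv_i\bigr)=\sum x_ix_j\sigma(\G_{ij})$ shows that its Gram matrix is $\sigma(\G)$.
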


	\subsection*{Local representation, local--global principles}
	
	Finally, let us briefly mention that one can also analogously define lattices over local fields. In particular, an important tool for studying a lattice $L$ over $\OK$ are the \enquote{localised lattices} $\OK[{K_{\p}}] \otimes L$ where $\p \subset \OK$ is a prime ideal and $K_{\p}$ the corresponding completion. The theory of lattices over local fields is well developed, and, in particular, it is easy to determine their sets of represented numbers. If $\alpha \in \OKPlus$ is represented by $\OK[{K_{\p}}] \otimes L$ for all primes $\p$, we say that $\alpha$ is \emph{locally represented} by $L$. If this holds for all $\alpha\in\OKPlus$ (all $\alpha\in S$), then we call $L$ \emph{locally ($S$-)universal}. We say that $L$ \emph{satisfies the local--global principle} if $\alpha \in \OKPlus$ is represented by $L$ if and only if it is locally represented by $L$.
	
	As a black box, we shall use the following: The \emph{class number} of $L$ is the number of isometry classes of lattices that are locally isometric to $L$. This class number is $1$ if and only if $\mathrm{mass}(L) = 1/\abs{\mathrm{Aut}(L)}$, where $\mathrm{Aut}(L)$ is the group of automorphisms of $L$ and the definition of mass is not needed here; the crucial point is that both quantities are easily computable (for example by a single command in Magma), so it is easy to check whether a lattice has class number $1$. This is important because of this well-known fact: A lattice with class number $1$ satisfies the local--global principle.
	
	Therefore, while deciding the set of represented elements of a given lattice might be a very hard open problem, it boils down to a computational task if the class number is $1$. Unfortunately, such lattices are very rare, see \cite{Kirschmer, KirschmerLorch}. In general, we have at least the following result by Hsia--Kitaoka--Kneser \cite{HKK}: Let $\rank(L)\geq 5$. Then there exists a $C$ such if $\alpha\in\OKPlus$ is locally represented by $L$ and $\NN(\alpha) > C$, then $\alpha \to L$. Since there are only finitely many elements of $\OO$ with norm under any bound $C$, we see that every quadratic lattice of rank at least $5$ satisfies the local--global principle with finitely many exceptions. Although the constant $C$ is usually too large to be of any practical value, this asymptotic local--global principle is an extremely important theoretical tool.

	\subsection*{Criterion sets, critical elements}
	
	In this subsection we summarise the necessary notions and results from \cite{KKR}. Note that while the present paper is largely independent (it is enough to understand the few definitions and facts given below), the reader may want to consult \cite{KKR} for more context on criterion sets.
	
	As we explained, the set of elements represented by a lattice is naturally a subset of $\OO$. For our definitions to make sense, it is necessary to formulate them in this language (\enquote{modulo squares of units}), although it looks cumbersome at the first sight. 
	
	\begin{definition}
		Let $S \subset \OO$. We say that $\CC \subset S$ is an \emph{$S$-criterion set} if for every lattice $L$ over $K$, we have the following equivalence: $L$ is $S$-universal if and only if it is $\CC$-universal.
		
		If the equivalence holds for every classical lattice, we speak about an \emph{$(S,\cl)$-criterion set}. If it holds for every diagonal lattice (lattice corresponding to a diagonal quadratic form), we speak about an \emph{$(S,\diag)$-criterion set}. 
	\end{definition}
	
	Often it shall be clear from the context that we only care about classical lattices, and in that case we omit the \enquote{$\cl$}. As with many of the upcoming definitions, if $S=\OO$, we omit the prefix \enquote{$S$-}. In this case, we just say \enquote{criterion set} or sometimes \enquote{universality criterion (set)}.

	\begin{definition}
		Let $L$ be a lattice and $S \subset \OO$. If $\alpha\in S$ is such that $\alpha \not\to L$ but $\beta\to L$ for all $\beta\in S$ with $\NN(\beta) < \NN(\alpha)$, we say that $\alpha$ is an \emph{$S$-truant} of $L$.
	\end{definition}
	
	As before, if $S=\OO$, we just say \enquote{truant}. Some lattices have a unique truant (as an element of $\OO$), but many do not, as there can be several elements of $\OO$ of the same norm. For example, the zero lattice $\{\vectorstyle{0}\}$ has all totally positive units as truants. This non-uniqueness is a problem in some further definitions, and we shall therefore need the following auxiliary notion.
	
	\begin{definition} \label{de:admissible}
		Let $\rho$ be a total order on $S$ such that $\NN(\alpha) > \NN(\beta)$ implies $\alpha >_\rho \beta$. For the purposes of this paper, we call any order with this property \emph{admissible}.
	\end{definition}
	
	The specific choice of $\rho$ is usually not too important.
	
	\begin{definition}
		Let $L$ be a lattice, $S \subset \OO$, and $\rho$ an admissible order on $S$. If $\alpha\in S$ is such that $\alpha \not\to L$ but $\beta\to L$ for all $\beta\in S$, $\beta <_\rho \alpha$, we say that $\alpha$ is \emph{the $S_\rho$-truant} of $L$.
	\end{definition}
	
	Note that every lattice either has a unique $S_\rho$-truant, or is $S$-universal. This allows us to write $\Srtruant(L)$ without any ambiguity.
	
	\begin{definition}
		Let $S \subset \OO$ and $\alpha\in S$. If there exists a lattice $L$ such that $\alpha \not\to L$ but $L$ is $(S\setminus\{\alpha\})$-universal, then we say that $\alpha$ is \emph{$S$-critical}.
		
		Furthermore, if the lattice can be chosen classical, then $\alpha$ is ($S,\cl$)-\emph{critical}. If it can be chosen diagonal (i.e.\ corresponding to a diagonal quadratic form), then $\alpha$ is ($S,\diag$)-\emph{critical}.
		
		We write $\CC_S$ for the set of all $S$-critical elements, $\Ccl_S$ for the set of all ($S,\cl$)-critical elements and $\Cdiag_S$ for the set of all ($S,\diag$)-critical elements. If $S=\OO$, we write simply $\CC_K$, $\Ccl_K$ and $\Cdiag_K$.
	\end{definition}
	
	Typically, it is clear from the context whether the lattices under consideration are classical or even diagonal; in that case it is enough to say $S$-critical. Again, if $S=\OO$, we can just say \enquote{critical}. The reason why we are interested in $S$-critical elements is that each of them obviously must belong to every $S$-criterion set.
	
	It seems very hard to prove that an element is $S$-critical, since determining exactly the set represented by a lattice is often impossible. However, the following crucial lemma from \cite{KKR} shows that in fact, determining whether a given $\alpha$ is critical or not is quite easy.
	
	\begin{proposition}[{\cite[Prop.~3.1]{KKR}}] \label{pr:KKRcritical}
		Let $\alpha \in S$. The following are equivalent:
		\begin{enumerate}[label=(\alph*), ref=(\alph*)]
			\item $\alpha$ is $S$-critical;
			\item there exists a lattice $L$ for which $\alpha$ is an $S$-truant. 
		\end{enumerate}
		Analogous statements hold for ($S,\cl$)-critical and ($S,\diag$)-critical elements.
	\end{proposition}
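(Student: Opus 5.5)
The implication (a)$\Rightarrow$(b) is immediate. If $L$ is $(S\setminus\{\alpha\})$-universal and $\alpha\not\to L$, then in particular $L$ represents every $\beta\in S$ with $\NN(\beta)<\NN(\alpha)$, since such $\beta$ differ from $\alpha$ as elements of $\OO$. So $\alpha$ is an $S$-truant of $L$. The same lattice works in the classical and diagonal settings.

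For (b)$\Rightarrow$(a), start from a lattice $L_0$ for which $\alpha$ is an $S$-truant, and enlarge $L_0$ to a lattice that represents all of $S\setminus\{\alpha\}$ but still not $\alpha$. The natural building block is a scaled copy of a universal lattice. Fix a universal lattice $U$ (it exists by Hsia--Kitaoka--Kneser; take it classical, resp.\ diagonal, when needed, for instance a suitable sum of many squares or a diagonal universal form). For $\gamma\in\OKPlus$ the scaled lattice $\gamma U$ represents exactly $\gamma\cdot\OKPlus$. Set
\[
L = L_0 \perp \mathop{\perp}_{\gamma\in T} \gamma U ,
\]
where $T$ is a finite set to be chosen. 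Any element represented by $L$ but not by $L_0$ has the form $\beta=\beta_0+\sum\gamma\mu_\gamma$ with some $\mu_\gamma\succ0$, so $\beta\succeq\gamma\mu_\gamma$ for some $\gamma\in T$. It therefore suffices that no element $\succeq$ some multiple $\gamma\mu$ with $\gamma\in T$ equals $\alpha$ up to unit squares, while every $\beta\in S\setminus\{\alpha\}$ with $\beta\not\to L_0$ is represented by some $\gamma U$ or by a sum.

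The simplest choice is to cover each non-represented $\beta$ directly. Every $\beta\in S$ not represented by $L_0$ has $\NN(\beta)\ge\NN(\alpha)$. The danger is $\alpha\preceq\gamma\mu$ in a suitable sense, so I would instead use the contrapositive condition: $\alpha\to \gamma U$ only if $\gamma\mid\alpha$ with totally positive quotient, and more generally $\alpha\to L$ only if $\alpha=\beta_0+\sum\gamma\mu_\gamma$. Because $\alpha$ has only finitely many decompositions $\alpha=\beta_0+\delta$ with $\delta\succeq0$ (the finiteness of elements $\preceq\alpha$), one can choose the multipliers $\gamma$ to avoid all of these finitely many $\delta$.

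A cleaner way to arrange this is to choose $T$ consisting of elements $\gamma$ with $\gamma\not\preceq\delta$ for every such $\delta$. For example, take each $\gamma$ of large trace in every embedding direction, and represent every $\beta$ with $\NN(\beta)\ge\NN(\alpha)$, $\beta\neq\alpha$, by such pieces. This covering step, handling infinitely many $\beta$ with finitely many blocks, is where I expect the main obstacle.

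To handle it, I would use the asymptotic local--global principle of Hsia--Kitaoka--Kneser. Append a rank $\ge5$ block $M$ whose minimum is $\succ\alpha$ in all embeddings; for instance $M=\gamma U'$ for a locally universal $U'$ and suitable $\gamma$, chosen so that $M$ cannot contribute to representing $\alpha$. Such an $M$ represents all locally represented elements of large norm, and local conditions can be met by the freedom in $L_0\perp M$. The finitely many remaining $\beta\ne\alpha$ of bounded norm are then each added individually as unary blocks $\qf{\beta}$. One must check that $\qf{\beta}$ together with the rest never produces $\alpha$. This holds because any representation of $\alpha$ using $\qf{\beta}$ forces $\beta x^2\preceq\alpha$. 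Since $\NN(\beta)\ge\NN(\alpha)$ and $\beta\neq\alpha$ modulo unit squares, this is impossible: $\beta x^2\preceq\alpha$ with $\NN(\beta x^2)\ge\NN(\alpha)$ forces equality. Unary blocks are classical and diagonal, so the argument transfers to the $\cl$ and $\diag$ variants.
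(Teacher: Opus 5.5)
Your direction (a)$\Rightarrow$(b) is correct. So is your closing observation: an orthogonal block $\qf{\beta}$ with $\beta\in S$, $\beta\not\to L_0$ and $\beta\neq\alpha$ in $\OO$ can never take part in a representation of $\alpha$ (from $\beta x^2\preceq\alpha$ and $\NN(\beta x^2)\geq\NN(\alpha)$ you get $\alpha=\beta x^2$ with $x$ a unit).

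The gap is the step you dispose of with \enquote{local conditions can be met by the freedom in $L_0\perp M$}. This is the substantive part of the proof, and for your $M=\gamma U'$ it is false in general. If $U'$ is universal and $\gamma$ is a unit, then $M$ represents $\alpha$, so $\gamma$ must be a non-unit. At every prime $\p\mid\gamma$ the block $M$ then only represents elements of $\gamma\OK[{K_{\p}}]$. Hence $L_0\perp M$ represents at $\p$ only elements congruent modulo $\gamma\OK[{K_{\p}}]$ to a $\p$-adic value of $L_0$.

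Concretely, take $\Q$ with $S=\Z^+$. The lattice $L_0=\qf{1}$ has truant $\alpha=2$, which forces $\gamma\geq 3$. For every such $\gamma$, some residue class is missed locally at a prime dividing $\gamma$; for example, every $\beta\equiv 2\pmod 3$ is missed when $\gamma=3$. So Hsia--Kitaoka--Kneser \cite{HKK} leaves infinitely many elements of $S\setminus\{\alpha\}$ unrepresented, and your final step with \enquote{the finitely many remaining $\beta$} does not apply. If you instead allow a unit $\gamma$, you need a locally universal $U'$ of rank at least $5$ representing nothing $\preceq\alpha$, and the existence of such a lattice is exactly what is unproved.

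Also, \enquote{minimum $\succ\alpha$} is impossible whenever $\U_K$ is infinite, since $Q(\ve^k\vv)=\ve^{2k}Q(\vv)$ becomes arbitrarily small in some embedding. The right requirement is that no nonzero value of $M$ is $\preceq\alpha$.

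What is missing is a way to reach local $(S\setminus\{\alpha\})$-universality using only additions that cannot create a representation of $\alpha$. The argument of \cite{KKR}, sketched in the proof of Lemma~\ref{le:finitepaths}, does this as follows:
\begin{enumerate}
\item Raise the rank to at least $5$.
\item Observe that only finitely many places need attention, and that each $K_\p$ has finitely many square classes.
\item In each square class, add (if not yet represented) the element of $S\setminus\{\alpha\}$ of minimal $\p$-valuation; this covers the whole class locally. Each such addition is safe by your own norm argument.
\item Apply Hsia--Kitaoka--Kneser, then add finitely many further unary blocks.
\end{enumerate}

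Alternatively, your scaling idea can be repaired directly. Take $M=\gamma_1U'\perp\gamma_2U'$, with $U'$ universal of the required type (classical, resp.\ diagonal) and $\gamma_1,\gamma_2$ two distinct rational primes exceeding $\NN(\alpha)$. At every $\p$ one of the $\gamma_i$ is a $\p$-adic unit, so $M$ is locally universal because $U'$ is. Every nonzero value of $M$ is $\succeq\gamma_i\mu$ for some $i$ and some $\mu\succ 0$, so it has norm $>\NN(\alpha)$ and cannot be $\preceq\alpha$. Hsia--Kitaoka--Kneser applied to $M$ alone then leaves only finitely many elements, to be added as unary blocks.
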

	
	Finally we are ready to state the main result about the criterion sets. A simplified version is as follows: \emph{For every totally real number field $K$, there exists a unique criterion set $\CC_K$ which is minimal with respect to inclusion. This set $\CC_K$ is finite and consists precisely of the critical elements.}
	
	For the full version, remember that we already defined $\CC_S$ as the collection of all $S$-critical elements, and we know that it is contained in every $S$-criterion set.
	
	\begin{theorem}[{\cite[Thm.~3.3]{KKR}}] \label{th:KKRunique} Let $K$ be a totally real number field and $S \subset \OO$. Then:
		\begin{enumerate}[label=(\alph*), ref=(\alph*)]
			\item $\CC_S$ is an $S$-criterion set. 
			\item $\CC \subset S$ is an $S$-criterion set if and only if $\CC_S \subset \CC$.
			\item $\Cdiag_S \subset \Ccl_S \subset \CC_S$.
			\item $\CC_S$ is finite.
		\end{enumerate}
		The analogous statement holds also for $\Ccl_S$ and $\Cdiag_S$.
	\end{theorem}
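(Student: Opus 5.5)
The plan is to prove parts (c), (b) and (a) directly from the definitions and Proposition \ref{pr:KKRcritical}, and to handle (d), the only substantial part, by an escalation argument.

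\textbf{Part (c).} This is immediate: diagonal lattices are classical and classical lattices are lattices. A witness lattice for ($S,\diag$)-criticality is therefore also a witness for ($S,\cl$)-criticality, and likewise for the second inclusion.

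\textbf{Part (a).} Let $L$ be $\CC_S$-universal and suppose it is not $S$-universal. Norms of elements of $\OO$ are positive integers, so among the elements of $S$ not represented by $L$ there is one, $\alpha$, of minimal norm. By definition $\alpha$ is an $S$-truant of $L$. Proposition \ref{pr:KKRcritical} then shows that $\alpha$ is $S$-critical, i.e.\ $\alpha\in\CC_S$, which contradicts the $\CC_S$-universality of $L$. In the classical and diagonal versions the same argument works, because $L$ itself is classical or diagonal and so witnesses the corresponding criticality.

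\textbf{Part (b).} If $\CC_S\subset\CC$, then a $\CC$-universal lattice is $\CC_S$-universal, hence $S$-universal by (a). Conversely, let $\alpha\in\CC_S$ and let $L$ be a lattice that is $(S\setminus\{\alpha\})$-universal but does not represent $\alpha$. If $\alpha\notin\CC$, then $L$ would be $\CC$-universal but not $S$-universal, contradicting the assumption that $\CC$ is a criterion set. Hence $\CC_S\subset\CC$.

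\textbf{Part (d): building the escalation tree.} Fix an admissible order $\rho$. I would build the escalation tree as follows.
\begin{itemize}
\item The root is the zero lattice.
\item The children of a non-$S$-universal node $L$, with $\alpha=\Srtruant(L)$, are all lattices generated by $L$ and one vector $\vv$ with $Q(\vv)=\alpha$.
\end{itemize}
Each node has only finitely many children, for the following reason. If $L$ is spanned (pseudobasis-wise) by $\vv_1,\dots,\vv_r$, then Cauchy--Schwarz gives $B_Q(\vv_i,\vv)^2\preceq Q(\vv_i)\,\alpha$. This leaves finitely many choices of the entries $B_Q(\vv_i,\vv)\in\frac12\OK$ (or $\OK$ in the classical case), hence finitely many Gram data. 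Each choice yields finitely many lattices, since a lattice has only finitely many overlattices of the same rank.

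\textbf{Part (d): every critical element appears in the tree.} Let $\alpha$ be an $S$-truant of some $M$, which exists by Proposition \ref{pr:KKRcritical}. I would escalate inside $M$: start from $\{\vectorstyle0\}$, and as long as the current truant is not $\alpha$, it has norm below $\NN(\alpha)$, so it is represented by $M$ and we may adjoin a representing vector of $M$. This produces an ascending chain of submodules of the noetherian module $M$, each step strictly enlarging the lattice. The chain therefore terminates, and since $\rho$ refines the norm, it can only stop at a node whose $\rho$-truant is an element of $S$ of norm $\NN(\alpha)$ that $M$ does not represent. To land exactly on $\alpha$ I would choose $\rho$ so that $\alpha$ is the first element of its norm, or run the argument once for each of the finitely many admissible orderings within the norm class. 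So every critical element is the truant of some tree node, and finiteness of $\CC_S$ follows once the tree is shown to be finite.

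\textbf{Part (d): the tree is finite (main obstacle).} By König's lemma it suffices to rule out an infinite branch $L_0\subset L_1\subset\cdots$. Along such a branch the truants strictly increase, so their norms tend to infinity. The rank cannot stay constant forever: every $L_j$ of a fixed rank is contained in one fixed lattice $K L_j\cap L_N$, so it has only finitely many overlattices in that space. Hence the rank eventually exceeds any bound.

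I would first argue that some $L_k$ with $\rank(L_k)\geq5$ is locally $S$-universal (or at least locally represents all of $S$ at every prime). The idea is that $L_k$ represents all elements of $S$ up to a large norm, which covers representatives of all local square classes at the finitely many relevant primes, together with the standard fact that lattices of rank $\geq5$ are locally universal at almost all primes. This local step is where I expect the real difficulty, and I would try it first via the finiteness of local criteria.

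Granting it, the Hsia--Kitaoka--Kneser theorem shows that $L_k$ fails to represent only a finite set $E\subset S$ of elements of norm at most $C$. Every later $L_j$ contains $L_k$, so each later truant lies in $E$, and each escalation step removes one element of $E$. Hence the branch ends after at most $|E|$ further steps, contradicting its infiniteness. The classical and diagonal versions of (d) follow the same way, restricting the admissible Gram data to classical or diagonal ones.
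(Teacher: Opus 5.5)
Your arguments for (a)--(c) are correct. Your plan for (d) is the paper's own route (Lemmas \ref{le:finitedegree}, \ref{le:finitepaths} and Theorem \ref{th:finite}): finite branching via Cauchy--Schwarz and finitely many same-rank integral overlattices, no infinite branch via rank growth, local $S$-universality and Hsia--Kitaoka--Kneser, then K\H{o}nig's lemma.

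\textbf{The gap: linking critical elements to the tree.} You escalate inside a lattice $M$ for which $\alpha$ is merely an $S$-truant, so you may stop at a different unrepresented element of norm $\NN(\alpha)$. Your remedy lets $\rho$ depend on $\alpha$. But finiteness is proved for each $\rho$-tree separately, and an admissible order involves an independent choice in every norm class. So there are in general infinitely many admissible orders; only the orderings \emph{within one} norm class are finitely many. Nothing then bounds the union of all the $\rho_\alpha$-trees.

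The fix is to use the defining witness of criticality. A lattice $L$ representing $S\setminus\{\alpha\}$ but not $\alpha$ satisfies $\Srtruant(L)=\alpha$ for \emph{every} $\rho$. Hence escalating inside $L$ along one fixed $\rho$ must stop exactly at $\alpha$; this is the implication 0)$\Rightarrow$d) of Proposition \ref{pr:escalators}. Alternatively, keep $M$ and note that the node where you stop lies in the single fixed $\rho$-tree and has a truant of norm $\NN(\alpha)$. This bounds $\NN(\alpha)$ by the largest truant norm of that finite tree.

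\textbf{Two smaller points in the branch argument.} First, same-rank steps terminate because integral lattices on $KL_j$ containing $L_j$ lie in $\frac12 L_j^{\#}$; the lattice ``$KL_j\cap L_N$'' you invoke is not available on an infinite branch. Second, in the local step it is not enough to represent \emph{some} element of each local square class. Fix the finite set of primes in advance: take the bad primes of one $L_{k_0}$ of rank $\geq 3$, and note that every later $L_k\supset L_{k_0}$ is locally universal at all other primes. Then, for each such $\p$ and each square class $c$ of $K_\p^\times$, take the element of $S\cap c$ of minimal $\p$-valuation. Every other element of $S\cap c$ is an $\OK[{K_{\p}}]$-square multiple of it, so representing these finitely many elements gives local $S$-universality.

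\textbf{The diagonal case.} The diagonal half of your last sentence fails. Sublattices of diagonal lattices are generally not diagonal, so the escalators you build inside a diagonal witness need not lie in a tree with ``diagonal Gram data''. If that phrase means $B_Q(\vv_i,\vv)=0$, the tree is a single path. Over $\Q$, for example, it is $\qf{}\to\qf{1}\to\qf{1,2}\to\qf{1,2,5}\to\qf{1,2,5,10}$, with truants $1,2,5,10$, and it misses $3,6,7,14,15\in\Cdiag_{\Q}$. This is exactly why the paper develops pseudoescalations in Section \ref{se:diagonal}. For finiteness alone you need no diagonal tree: $\Cdiag_S\subset\Ccl_S$ by (c), and $\Ccl_S$ is finite by the classical case.
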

	
	Let us stress once again that we often abuse notation and do not make a distinction between elements of $\OKPlus$ and of $\OO$. In particular, we write criterion sets as if they were subsets of $\OKPlus$ (see Theorems \ref{th:sqrt2main} and \ref{th:sqrt3}), while strictly speaking they are subsets of $\OO$. For example, when we write $7+2\sqrt3 \in \Ccl_{\qq{3}}$, one must read it as $(7+2\sqrt3)\U_{\qq{3}}^2 \in \Ccl_{\qq{3}}$. We believe that it increases legibility and does not cause confusion.

	Finally, let us make a few remarks that are specific for the case $S=\OO$. First, the sets $\CC_K$, $\Ccl_K$ and $\Cdiag_K$ are closed under multiplication by totally positive units and under field automorphisms \cite[Cor.~3.5]{KKR}. Second, they only contain squarefree elements, since an element divisible by a nontrivial square can never be critical. Third, the set $\Cdiag_K$ (and thus also the larger sets $\Ccl_K$ and $\CC_K$) contains all squarefree indecomposable elements \cite[Thm.~4.2]{KKR}.

	\section{Finiteness, escalation} \label{se:finite}
	
	Throughout the section, let $K$ be a totally real number field and let $S \subset \OO$ be given. Thanks to Theorem \ref{th:KKRunique} (the main result of \cite{KKR}), we know that there is a unique minimal $S$-criterion set and that it consists precisely of the $S$-critical elements. Now we will prove its finiteness (thus providing a simpler proof for this important special case of \cite[Thm.~5.7]{CO} and hence also for Theorem \ref{th:KKRunique}(d)) and also show a method for computing it explicitly. It is the well-known method of escalation, adapted for number fields. To be more exact, the method of escalation is only capable of computing all elements of the criterion set with norm under a given bound; proving that there are no other critical elements is usually very hard (recall the notoriously hard, still not fully published proof of the 290-Theorem).
	
	\emph{Throughout this section, let the meaning of \enquote{lattice} be fixed -- either as \enquote{totally positive definite quadratic lattice} or as \enquote{classical totally positive definite quadratic lattice}.} The whole theory including proofs works verbatim the same for both cases. On the other hand, the case of diagonal forms behaves somewhat differently, cf.\ Section \ref{se:diagonal}.

	Our aim is to be able to decide whether a given $\alpha \in S$ is $S$-critical. By the crucial Proposition \ref{pr:KKRcritical}, this means to either find a lattice with $S$-truant $\alpha$, or to show that such a lattice cannot exist. This will be achieved by essentially producing all lattices which represent all elements \enquote{smaller} than $\alpha$ and checking whether some of them fails to represent $\alpha$. To formulate this precisely, we need the notion of \emph{admissible order} from Definition \ref{de:admissible}; if not specified otherwise, we assume throughout that $\rho$ is a fixed admissible order on $S$.
	
	\begin{definition} \label{de:escalation}
		Let $L$ be a lattice. Any lattice of the form $L_1=\tilde{L} + \OK \vv$, where $\tilde{L}\simeq L$ and $Q(\vv)$ is an $S$-truant of $L$, is called an \emph{$S$-escalation} of $L$. Informally, $L_1$ was obtained from $L$ by adding a vector which represents the chosen truant. 
		
		Moreover, we say that $L_1$ is an \emph{$S_\rho$-escalation} of $L$ if $Q(\vv)$ is the $S_\rho$-truant of $L$.
		
		Finally, $L$ is an \emph{$S$-escalator} if it can be obtained by finitely many $S$-escalations from the zero lattice $\{\vectorstyle{0}\}$. Analogously we define an \emph{$S_\rho$-escalator}.
	\end{definition}
	
	As before, if $S=\OO$, we usually omit it and write just \emph{escalation} and \emph{escalator}. Observe that if $\Srtruant(L)=\alpha$, then $L \perp \qf{\alpha}$ is always an $S_\rho$-escalation of $L$; hence, a lattice has no $S_\rho$-escalation if and only if it is $S$-universal. 
	
	\begin{example}\label{ex:notincrease}
		Note that if $L_1$ is an escalation of $L$, they can have the same rank. We illustrate it for $K=\Q$ where there is only one choice of $\rho$, namely the standard ordering on $\R$. For example, $\qf{1}$ is an escalation of $\qf{4}$. Less trivially, $\qf{1,1}$ is an escalation of $\qf{2,2}$: If we denote by $\ee_1, \ee_2$ the two orthogonal vectors with $Q(\ee_i)=2$, then $L = \qf{2,2} = \Z \ee_1 + \Z \ee_2$; the vector $\frac12(\ee_1+\ee_2)$, not contained in $L$, represents $1=\truant(L)$; and $L + \Z \cdot \frac12 (\ee_1+\ee_2) = \Z \cdot \frac12(\ee_1+\ee_2) + \Z \cdot\frac12(\ee_1-\ee_2) \simeq \qf{1,1}$. Of course, another possible escalation is $\qf{2,2,1}$, where the rank \emph{has} increased.
		
		In these simple examples, the original lattice was not an escalator. However, the same situation happens even for escalators -- in Example \ref{ex:rankdidntgrow}, we shall see that over $\qq{2}$, some escalations of the escalator $\qf{1,2+\sqrt2,2}$ are isometric to $\qf{1,1,2+\sqrt2}$.
	\end{example}
	
	The key idea behind escalators is that they are the minimal lattices needed for representing a given set of numbers. We express it more precisely in the following proposition. Let us remark that while part \ref*{it:1} is interesting on its own and immediately provides motivation for studying escalators, it shall not be used anywhere; it is the simpler part \ref*{it:2} that we need in our further proofs.
	
	\begin{proposition} \label{pr:subescalators}
		Let $\rho$ be a fixed admissible order and $L$ a lattice. Then:
		\begin{enumerate}
			\item $L$ is $S$-universal if and only if it contains an $S$-universal $S_{\rho}$-escalator. \label{it:1}
			\item If $\Srtruant(L)=\alpha$, then $L$ contains an $S_{\rho}$-escalator $E$ with $\Srtruant(E)=\alpha$. \label{it:2}
		\end{enumerate}
	\end{proposition}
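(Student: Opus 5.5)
The plan is to build the escalator inside $L$ greedily, one truant at a time, starting from the zero lattice $E_0=\{\vectorstyle{0}\}\subset L$. Suppose we have an $S_\rho$-escalator $E_k\subset L$ (a genuine sublattice, with the restricted quadratic map). If $E_k$ is $S$-universal, stop. Otherwise let $\beta_k=\Srtruant(E_k)$. If $\beta_k\to L$, pick $\vv_k\in L$ with $Q(\vv_k)=\beta_k$ and set $E_{k+1}=E_k+\OK\vv_k\subset L$; by Definition \ref{de:escalation} (with $\tilde L=E_k$) this is an $S_\rho$-escalation of $E_k$, so $E_{k+1}$ is again an $S_\rho$-escalator contained in $L$. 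Note that classicality is inherited by sublattices of a classical $L$, so the argument is the same in both settings fixed in this section.

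The key monotonicity is that the truants strictly increase in $\rho$: every $\gamma<_\rho\beta_k$ is represented by $E_k\subset E_{k+1}$, and $\beta_k$ itself is represented by $E_{k+1}$, so $\Srtruant(E_{k+1})>_\rho\beta_k$ (if it exists). The main obstacle is to show the process terminates, i.e.\ one cannot have an infinite strictly $\rho$-increasing chain without reaching the desired conclusion. For part \ref{it:2}, with $\Srtruant(L)=\alpha$: every $\beta_k$ with $\beta_k<_\rho\alpha$ is represented by $L$, so the step is possible; and since each $E_k\subset L$ does not represent $\alpha$, we always have $\beta_k\le_\rho\alpha$. Because $\rho$ is admissible, the set $\{\beta\in S:\beta\le_\rho\alpha\}$ is contained in the set of elements of $\OO$ with norm at most $\NN(\alpha)$, which is finite. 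A strictly increasing sequence in a finite set must stop, so at some step $\beta_k=\alpha$, i.e.\ $E=E_k$ is an $S_\rho$-escalator in $L$ with $\Srtruant(E)=\alpha$.

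For part \ref{it:1}, one direction is trivial: containing an $S$-universal sublattice makes $L$ $S$-universal. Conversely, if $L$ is $S$-universal, every truant $\beta_k$ is represented by $L$, so the construction never gets stuck; the difficulty is that now the sequence $\beta_k$ may a priori grow forever. Here I would use rank and finiteness of overlattices: the ranks of $E_k$ are bounded by $\rank L$, so from some point on the rank is constant and the $E_k$ form an ascending chain of lattices in a fixed vector space $V$, all contained in $L\cap V$; since $L\cap V$ is a finitely generated $\OK$-module over a Noetherian ring (equivalently, $E_k$ has only finitely many overlattices of the same rank), the chain stabilises. But $E_{k+1}\ne E_k$ always (it represents $\beta_k$, which $E_k$ does not), a contradiction unless the process stops, which happens only when $E_k$ is $S$-universal. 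This gives the required $S$-universal $S_\rho$-escalator inside $L$.
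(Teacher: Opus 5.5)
Your proposal is correct and follows essentially the same route as the paper: you build the escalator greedily inside $L$ by adjoining vectors of $L$ that represent the successive $S_\rho$-truants. Termination in (2) comes from finiteness of square classes of norm at most $\NN(\alpha)$, and termination in (1) from the impossibility of an infinite strictly ascending chain of sublattices of $L$. The only difference is cosmetic: you justify the chain argument by Noetherianity, which applied directly to $L$ would even make your rank-stabilisation step unnecessary, whereas the paper combines finiteness of same-rank overlattices with the rank bound.
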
 
	\begin{proof}
		Let us start with \ref*{it:2}. We have a lattice $L$ which does not represent $\alpha$ but represents all $\beta\in S$, $\beta <_\rho \alpha$. Put $E_0 = \{\vectorstyle{0}\}$. Then iteratively define $\beta_{i+1} = \Srtruant(E_i)$; if $\beta_{i+1}=\alpha$, then stop and put $E=E_i$, otherwise choose any $\vv_{i+1} \in L$ such that $Q(\vv_{i+1}) = \beta_{i+1}$, and put $E_{i+1} = E_i + \OK \vv_{i+1}$. Since there are only finitely many square classes of elements under a given norm and since $L$ does not represent $\alpha$, the construction eventually stops and we have $\Srtruant(E_i) = \alpha$. All the constructed lattices $E_i$ are $S_\rho$-escalators; in particular, $E$ is an $S_\rho$-escalator as required.
		
		One implication in part \ref*{it:1} is trivial; the proof of the other is very similar to the proof of part \ref*{it:2}. Namely, we define the lattices $E_i$ in the same manner as before, only this time, we stop and put $E=E_i$ only when $E_i$ is $S$-universal. It remains to show that this happens after finitely many steps. The reason is that there cannot exist an infinite growing chain of sublattices in $L$: While it is possible that $E_i$ and $E_{i+1}$ have the same rank for some indices $i$, there are altogether only finitely many proper overlattices of $E_i$ of the same rank, so there is some $j$ such that $\rank(E_j) > \rank (E_i)$; inductively, we would find a lattice $E_k$ of arbitrarily large rank, which is a contradiction, as their rank is bounded by $\rank(L)$. 
	\end{proof}

	The following proposition explains the connection between $S$-escalators, $S_\rho$-escalators, and the $S$-critical elements:
	
	\begin{proposition}\label{pr:escalators}
		For $\alpha \in S$, the following are equivalent:
		\begin{enumerate}
			\item[0)] $\alpha$ is $S$-critical.
			\item[a)] $\alpha$ is an $S$-truant of some lattice.
			\item[b)] $\alpha$ is an $S$-truant of an $S$-escalator.
			\item[c)] For some admissible total order $\rho$, $\alpha$ is the $S_\rho$-truant of some $S_\rho$-escalator.
			\item[d)] For every admissible total order $\rho$, $\alpha$ is the $S_\rho$-truant of some $S_\rho$-escalator.
		\end{enumerate}
	\end{proposition}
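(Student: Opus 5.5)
The plan is a cycle of implications 0) $\Leftrightarrow$ a), then d) $\Rightarrow$ c) $\Rightarrow$ b) $\Rightarrow$ a) $\Rightarrow$ d). The equivalence 0) $\Leftrightarrow$ a) is exactly Proposition \ref{pr:KKRcritical}. The implication b) $\Rightarrow$ a) is trivial, since an $S$-escalator is in particular a lattice. The implication d) $\Rightarrow$ c) is also trivial, because admissible orders exist: one can order $S$ by norm and break ties among the finitely many classes of each norm arbitrarily (for instance via a well-ordering of each finite fibre).

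For c) $\Rightarrow$ b), let $E$ be an $S_\rho$-escalator with $\Srtruant(E)=\alpha$. Two checks are needed.

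First, $E$ is an $S$-escalator. At each step $E_{i+1}=E_i+\OK\vv$ with $Q(\vv)=\Srtruant(E_i)=\beta$. Since $\rho$ is admissible, every $\gamma\in S$ with $\NN(\gamma)<\NN(\beta)$ satisfies $\gamma<_\rho\beta$ and is therefore represented by $E_i$. Together with $\beta\not\to E_i$, this makes $\beta$ an $S$-truant of $E_i$, so each $S_\rho$-escalation is an $S$-escalation.

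Second, $\alpha$ is an $S$-truant of $E$. By the same admissibility argument, all elements of $S$ of norm smaller than $\NN(\alpha)$ precede $\alpha$ in $\rho$, hence are represented by $E$, while $\alpha\not\to E$.

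For a) $\Rightarrow$ d), fix an arbitrary admissible $\rho$ and a lattice $L$ having $\alpha$ as an $S$-truant. The difficulty is that $\alpha$ need not be the $S_\rho$-truant of $L$, since $L$ may miss some other $\beta$ of the same norm with $\beta<_\rho\alpha$. To fix this, I would enlarge $L$ to $L'=L\perp\qf{\beta_1,\dots,\beta_k}$, where $\beta_1,\dots,\beta_k$ are all elements of $S$ with $\NN(\beta_j)=\NN(\alpha)$, $\beta_j<_\rho\alpha$ and $\beta_j\not\to L$. There are finitely many such elements, since $\OO$ has finitely many elements of a given norm.

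The step needing the most care is checking that $L'$ still does not represent $\alpha$. Suppose $\alpha=Q(\vv)+\sum_j\beta_jx_j^2$ with some $x_j\neq0$. Then $\alpha\succeq\beta_jx_j^2$, and either $Q(\vv)\ne0$ or another term is nonzero, making the inequality strict in every embedding. This gives $\NN(\alpha)>\NN(\beta_jx_j^2)\ge\NN(\beta_j)=\NN(\alpha)$, a contradiction. The only remaining possibility is that $\alpha=\beta_jx_j^2$ with a single term, which forces $x_j$ to be a unit by norms, hence $\alpha=\beta_j$ in $\OO$, which is excluded. Hence all $x_j=0$ and $\alpha=Q(\vv)$ with $\vv\in L$, contradicting $\alpha\not\to L$. (In the classical setting $L'$ is still classical, so the argument stays within the chosen class of lattices.)

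Thus $\Srtruant(L')=\alpha$. Applying Proposition \ref{pr:subescalators}\ref{it:2} to $L'$ yields an $S_\rho$-escalator $E\subset L'$ with $\Srtruant(E)=\alpha$, which is d).
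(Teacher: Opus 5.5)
Your proof is correct, but your key implication differs from the paper's.

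The paper closes the cycle with 0) $\Rightarrow$ d). A critical $\alpha$ comes with a lattice representing all of $S$ except $\alpha\U_K^2$. The $S_\rho$-truant of that lattice is therefore $\alpha$ for \emph{every} admissible $\rho$, and Proposition~\ref{pr:subescalators}\ref{it:2} finishes at once. The paper calls d) $\Rightarrow$ c) $\Rightarrow$ b) $\Rightarrow$ a) trivial; your checks (admissible orders exist, and each $S_\rho$-escalation is an $S$-escalation) spell that out.

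You instead prove a) $\Rightarrow$ d) directly. You handle the possible gap between an $S$-truant and the $S_\rho$-truant by adjoining $\qf{\beta_1,\dots,\beta_k}$ for the finitely many unrepresented classes of the same norm that precede $\alpha$ in $\rho$. Your norm argument correctly shows that $\alpha$ stays unrepresented: a second nonzero summand forces strict total dominance and hence a strict norm inequality, while a single summand forces $x_j$ to be a unit.

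The paper's route is shorter because it reuses the strong object supplied by \cite[Prop.~3.1]{KKR}. Yours has the advantage that the equivalence of a)--d) becomes elementary. It no longer depends on the nontrivial direction a) $\Rightarrow$ 0) of Proposition~\ref{pr:KKRcritical}, which rests on the asymptotic local--global principle of \cite{HKK}; that deep input is needed only to attach 0) to the chain. Your padding trick also carries over verbatim to diagonal forms (Proposition~\ref{pr:DIescalators}), since adjoining unary summands keeps a form diagonal.
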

	\begin{proof}
		The equivalence between 0) and a) is contained in Proposition \ref{pr:KKRcritical}. All the implications in the direction d) $\Rightarrow$ c) $\Rightarrow$ b) $\Rightarrow$ a) are trivial. It remains to show that 0) $\Rightarrow$ d). Let $L$ be the lattice that represents all elements of $S$ except for $\alpha\U_K^2$. Then, in particular, $\Srtruant(L)=\alpha$ independently of $\rho$, so by Proposition \ref{pr:subescalators}\ref*{it:2}, $L$ contains an $S_\rho$-escalator with $S_\rho$-truant $\alpha$.
	\end{proof}
	
	While condition a) remains the simplest way of proving $S$-criticality, condition d) is very practical for showing that some element is \emph{not} $S$-critical: To understand $S$-critical elements (and thus the set $\CC_S$), it is enough to choose any $\rho$ and to be able to systematically produce all $S_\rho$-escalators. Our aim is to show that there are only finitely many $S_\rho$-escalators altogether (Theorem \ref{th:finite}). We will do this by examining the natural tree structure emerging when escalators are generated: We start with $\{\vectorstyle{0}\}$ and then, iteratively, for every leaf we add one new vertex for each $S_\rho$-escalation. 
	
	First we show the simple part -- in the thus produced tree, every vertex has only finitely many children. Although the principle is well known, we provide a proof for completeness sake.
	
	\begin{lemma}\label{le:finitedegree} 
		Given a lattice $L$, there are only finitely many $S_\rho$-escalations of $L$ (up to isometry).
	\end{lemma}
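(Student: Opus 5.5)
The $S_\rho$-truant $\alpha=\Srtruant(L)$ is uniquely determined by $L$ and $\rho$, so every $S_\rho$-escalation has the form $L_1=\tilde L+\OK\vv$ with $\tilde L\simeq L$ and $Q(\vv)=\alpha$; up to isometry we may assume $\tilde L=L$. (We may also assume $L$ is not $S$-universal, otherwise there are no escalations.) Strictly speaking $\alpha$ is a class in $\OO$, but $Q(\vv)$ can be any representative $\alpha u^2$. Since $Q(u^{-1}\vv)=\alpha$ and $\OK u^{-1}\vv=\OK\vv$ for a unit $u$, we may fix one representative $\alpha\in\OKPlus$.

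The plan is to pin down $L_1$ by finitely many data. Work in the ambient space $V=(K\otimes L)\perp K\ww$ of dimension $\rank(L)+1$. We distinguish two cases: $\vv\notin K\otimes L$ and $\vv\in K\otimes L$.

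\emph{Case $\vv\notin K\otimes L$.} Then $L_1$ is determined up to isometry by the values $B_Q(\vv,\vv_i)$ for a pseudobasis $\vv_1,\dots,\vv_r$ of $L$, together with $Q(\vv)=\alpha$. The lattice $L_1$ is integral, so these values lie in $\frac12\mathfrak a_i\OK$ (or in $\mathfrak a_i$ in the classical case); more simply, after scaling by a fixed nonzero element we may assume they lie in $\frac12\OK$. By Cauchy--Schwarz,
\[
B_Q(\vv,\vv_i)^2\preceq Q(\vv)Q(\vv_i)=\alpha Q(\vv_i),
\]
so every conjugate of each $B_Q(\vv,\vv_i)$ is bounded. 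There are only finitely many algebraic integers of fixed degree with all conjugates bounded, hence only finitely many choices of the Gram data. The isometry class of $L+\OK\vv$ is determined by these data together with the module structure, which is given by the fixed pseudobasis ideals of $L$ and the coefficient ideal $\OK$ of $\vv$. Concretely, $L_1\simeq L\perp\OK\vv$ modulo the kernel of the Gram form, so finitely many Gram data give finitely many isometry classes.

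\emph{Case $\vv\in K\otimes L$.} Then $L_1$ is an overlattice of $L$ of the same rank, and there are only finitely many such overlattices (they lie in $\frac12 L^{\#}$, as recalled in the preliminaries).

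Actually the two cases can be unified: in either case $L_1$ is generated by $L$ and a vector whose Gram vector against $\vv_1,\dots,\vv_r$ ranges over a finite set. The finiteness of the Gram vectors alone does not determine the isometry class in the degenerate case, which is why the overlattice argument is needed there.

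The main obstacle is making the isometry bookkeeping precise when $L$ is not free and when $\vv$ lies in $K\otimes L$. I would handle the former by bounding $B_Q(\vv,\ww)$ for the finitely many generators $\ww$ of $L$ as an $\OK$-module (not a pseudobasis). The integrality of $B_Q(\vv,\ww)\in\frac12\OK$ and the Cauchy--Schwarz bound then give finiteness directly. The extended Gram matrix of these generators together with $\vv$ then determines $L_1$ up to isometry: it is the quotient of the free module on these generators by the radical of the form.
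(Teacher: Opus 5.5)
Your proof is correct and follows the paper's argument. The same-rank case uses finiteness of integral overlattices, and the rank-increasing case bounds $B_Q(\vv_i,\vv)$ for a fixed generating set $\vv_1,\ldots,\vv_k$ of $L$ via Cauchy--Schwarz; your remarks on integrality of these values and on fixing one representative of $\alpha$ modulo $\U_K^2$ spell out points the paper leaves implicit. Your worry that the Gram data fail to determine $L_1$ when $\vv\in K\otimes L$ is unfounded. Your own quotient-by-the-radical description already covers that case, as does nondegeneracy of $B_Q$ on $K\otimes L$. This costs nothing, though, since you also give the overlattice argument.
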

	\begin{proof}
		If $L$ is $S$-universal, then it has no $S$-escalations at all. Hence, let $\alpha = \Srtruant(L)$. First consider escalations of the same rank as $L$: Since there are altogether only finitely many integral overlattices of $L$ spanning the same quadratic space, there are finitely many such escalations.
		
		So we can assume that the escalation is a lattice $L + \OK\vv$ where $Q(\vv)=\alpha$ and $\vv \notin KL$ -- hence, we know the linear structure, namely $L \oplus \OK\vv$, and it is enough to show that there are finitely many choices for the quadratic map on $L \oplus \OK\vv$. Fix a set of generators $\vv_1, \ldots, \vv_k$ of $L$. Then the map $Q$ is fully given once we know the values $B_Q(\vv_i,\vv)$. Since by Cauchy--Schwarz inequality we have $B_Q(\vv_i,\vv)^2 \preceq Q(\vv_i)\alpha$, there are only finitely many choices for each $B_Q(\vv_i,\vv)$, which proves the claim. 
	\end{proof}
	
	The following lemma is of course most important in the case when $L_0 = \{\vectorstyle{0}\}$ and thus all the lattices are escalators, but it holds in general with the same proof. 
	
	\begin{lemma} \label{le:finitepaths}
		There cannot exist an infinite sequence $(L_i)_{i=0}^\infty$ where each $L_{i+1}$ is an $S_\rho$-escalation of $L_i$.
	\end{lemma}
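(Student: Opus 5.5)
Suppose for contradiction that such an infinite sequence $(L_i)$ exists, with $L_{i+1} = \tilde L_i + \OK\vv_{i+1}$, where $\tilde L_i\simeq L_i$ and $Q(\vv_{i+1})=\alpha_{i+1}:=\Srtruant(L_i)$. The plan is to replace the abstract chain by a genuine increasing chain of lattices inside one ambient space. From that chain we will obtain two facts: the truants $\alpha_i$ are strictly increasing in $\rho$, and the rank cannot grow indefinitely. Together they give the contradiction.

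\emph{Step 1: monotonicity of truants.} Up to isometry, $L_i$ embeds in $L_{i+1}$, so everything represented by $L_i$ is represented by $L_{i+1}$. In addition, $L_{i+1}$ represents $\alpha_{i+1}$. Hence every $\beta\le_\rho\alpha_{i+1}$ in $S$ is represented by $L_{i+1}$, and so $\alpha_{i+2}>_\rho\alpha_{i+1}$. The truants therefore form a strictly $\rho$-increasing sequence of distinct elements of $S$. Admissibility of $\rho$, together with the fact that only finitely many classes in $\OO$ have norm below any bound, gives $\NN(\alpha_i)\to\infty$. This is not yet a contradiction by itself, but it shows the sequence is genuinely infinite and never stabilises.

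\emph{Step 2: the ranks.} The rank is nondecreasing. I claim it cannot stay constant forever. If $\rank L_i=\rank L_{i+1}=\dots$ from some $i$ on, then after identifying along the isometries we get a strictly increasing chain $L_i\subsetneq L_{i+1}\subsetneq\cdots$ in the single quadratic space $KL_i$. The inclusions are strict because $L_{j+1}$ represents $\alpha_{j+1}$ and $L_j$ does not. All these lattices are integral (in the classical case, or half-integral in general), so each is contained in $\frac12 L_i^{\#}$. By the fact quoted in the preliminaries, this admits only finitely many overlattices of $L_i$, which is a contradiction. Hence the rank increases infinitely often, and so $\rank L_i\to\infty$.

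\emph{Step 3: bounding the rank.} This is the main obstacle. An unbounded rank is not absurd in itself, since there is no ambient $L$ as in Proposition~\ref{pr:subescalators}. So I need an a priori bound. My first attempt would use the Hsia--Kitaoka--Kneser asymptotic local--global principle. Once $\rank L_i\ge 5$ plus enough extra rank, I would want to show that $L_i$ is locally universal at every prime and so represents everything of norm beyond some $C$.

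This needs care, because $C$ depends on $L_i$. Local universality at a prime, however, should be forced once the rank is large, say at least $3+$(some bound). The idea is that a lattice of huge rank contains, locally at each $\p$, a big unimodular-like or hyperbolic component. The difficulty is that the scales may be large and non-dyadic primes could obstruct this, so I would rather argue that escalation forces small local invariants.

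Concretely, I would try to show that $L_i$ for $i\ge i_0$ contains a fixed sublattice $M=L_{i_0}$ of rank at least $5$ that is locally $S$-universal. Then HKK applies to $M$ with one constant $C$, so every later truant has norm at most $C$, contradicting $\NN(\alpha_i)\to\infty$ from Step~1. The remaining gap is the local universality of $M$. Here I would use that each truant $\alpha_{j}$ ranges over all small elements, including units and their multiples by primes, so that the local representations accumulate. If this fails, the fallback is a more direct counting argument comparing discriminants and successive minima.
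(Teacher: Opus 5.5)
\paragraph{A genuine gap in Step 3: local $S$-universality is never established.} Your Steps 1 and 2 are correct. Your intended endgame is exactly the structure of the paper's proof: a fixed $M=L_{i_0}$ of rank at least $5$ that is locally $S$-universal, so that HKK bounds the norms of all later truants and contradicts $\NN(\alpha_i)\to\infty$. The gap is the one you name yourself. You never show that some $L_{i_0}$ is locally $S$-universal, and this is the only step with real content.

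The heuristics you offer do not supply it. Large rank alone forces nothing at a given prime: for $\pi\in\p$, the scaled lattice $(L,\pi Q)$ has all its values in $\p$. It therefore represents no $\p$-adic unit, whatever $\rank L$ is. Knowing that \enquote{units and their multiples by primes} are represented does not control all of $S$ locally. Nor does it explain why only finitely many primes matter. The \enquote{discriminants and successive minima} fallback is not an argument.

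\paragraph{The missing idea.} The paper cites it from \cite[Prop.~3.1]{KKR}, and it has two parts.

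First, reduce to finitely many primes. Fix $N$ with $\rank L_N\ge 3$. At every non-dyadic $\p$ where $L_N$ is unimodular (all but finitely many), $\mathcal{O}_{K_\p}\otimes L_N$ is unimodular of rank $\ge 3$. Hence it represents every element of $\mathcal{O}_{K_\p}$. Every later $L_i$ contains an isometric copy of $L_N$, so this persists along the chain. The real places are automatic, since the lattices are totally positive definite and $S$ consists of totally positive elements.

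Second, handle each of the finitely many remaining primes $\p$ using that $K_\p^\times/(K_\p^\times)^2$ is finite. For every square class meeting $S$, pick $\beta\in S$ in it of minimal $\p$-adic valuation. Any other $\alpha\in S$ in that class equals $u^2\beta$ with $u\in\mathcal{O}_{K_\p}$. So $\alpha$ is represented by $\mathcal{O}_{K_\p}\otimes L$ as soon as $\beta$ is represented by $L$: take $u\vv$ where $Q(\vv)=\beta$.

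Together, these give a \emph{finite} set of elements of $S$ whose global representation implies local $S$-universality. By your Step 1, all of them are represented by $L_i$ once the truant of $L_i$ has larger norm than each of them. That gives an index $N'$ from which every $L_i$ is locally $S$-universal. Then $M=L_{\max\{N,N'\}}$, with $N$ chosen so that $\rank L_N\ge 5$, completes your argument.
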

	\begin{proof}
		Note that with each $S$-escalation, the number of represented elements of $S$ increases (at least by $1$). We aim to show that it is impossible to perform infinitely many consecutive $S$-escalations, since eventually we get an $S$-universal lattice which will make further $S$-escalations impossible.
		
		Realising this, we are in almost the same situation as in the proof of the nontrivial implication in \cite[Prop.~3.1]{KKR} (which is stated here as Proposition \ref{pr:KKRcritical}): There, the aim was to show that a lattice will eventually be $(S \setminus \{\alpha\})$-universal, here it will be $S$-universal. The proof will also be almost the same.
		
		First we need to show that, since no $L_i$ is $S$-universal, we can find an $N$ such that $L_N$ has rank at least $5$. Indeed, since $L_0$ has only finitely many integral overlattices of the same rank, and escalation produces proper overlattices, the rank will eventually be larger than that of $L_0$, and then we again have only finitely many overlattices of the same rank, etc.
		
		We also show that there exists an $N'$ such $L_{N'}$ is locally $S$-universal (and thus, this holds for all later lattices as well). This part of the proof is virtually the same as in the proof of \cite[Prop.~3.1]{KKR}. Thus we omit the details and just provide two pointers for the convenience of the reader: 1) One observes that there are in fact only finitely many places to be handled, then proves the statement for each of these places separately; 2) given a place $\p$, there are only finitely many square classes in $K_\p$, and in each of them, one has to consider the element of $S$ with the minimal $\p$-valuation.
		
		The lattice $L_{\max\{N,N'\}}$ has rank at least $5$ and is locally $S$-universal. By \cite{HKK}, it represents all but finitely many elements of $S$. Thus, after finitely many further $S$-escalations we get a lattice which is $S$-universal.
	\end{proof}
	
	Finally we are ready to prove that the escalation procedure terminates. This is important for the practical computation of $\CC_K$ (see Section \ref{se:sqrt2}), but even more importantly, it proves that the minimal criterion set is finite:
	
	\begin{theorem}\label{th:finite}
		Given any $S \subset \OO$ and an admissible total order $\rho$, there exists an $N$ such that it is impossible to perform more than $N$ consecutive $S_\rho$-escalations starting from $\{\vectorstyle{0}\}$.
		
		In particular, there exist only finitely many $S_\rho$-escalators (up to isometry) and finitely many $S$-critical elements.
	\end{theorem}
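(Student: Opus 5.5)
The proof will be a König's lemma argument applied to the escalation tree described before Lemma \ref{le:finitedegree}. Consider the rooted tree $T$ whose root is $\{\vectorstyle{0}\}$. The children of a vertex $L$ are the isometry classes of $S_\rho$-escalations of $L$. Vertices are thus isometry classes of $S_\rho$-escalators together with the escalation path leading to them; we may treat them as paths to avoid worrying about the same class occurring in several places. Whether a lattice has escalations, and what they are up to isometry, depends only on its isometry class, since representation and the $S_\rho$-truant are isometry invariants. So $T$ is well defined.

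By Lemma \ref{le:finitedegree}, every vertex of $T$ has only finitely many children, so $T$ is locally finite. By Lemma \ref{le:finitepaths} (applied with $L_0=\{\vectorstyle{0}\}$), $T$ contains no infinite path starting at the root. We need one check here: an infinite branch of $T$ is a sequence of isometry classes, each containing an escalation of a representative of the previous one. Since escalation is defined up to replacing $L$ by an isometric $\tilde L$, we can choose actual lattices $L_i$ with $L_{i+1}$ an $S_\rho$-escalation of $L_i$, which contradicts the lemma. König's lemma then gives that $T$ is finite. Explicitly: if $T$ were infinite, some child of the root would have an infinite subtree, because there are finitely many children; iterating this produces an infinite branch. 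Taking $N$ to be the height of $T$ proves the first claim, since any chain of consecutive $S_\rho$-escalations from $\{\vectorstyle{0}\}$ traces a path in $T$.

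For the consequences, every $S_\rho$-escalator is by definition reached by a finite chain of $S_\rho$-escalations from $\{\vectorstyle{0}\}$, so it appears as a vertex of the finite tree $T$. Hence there are only finitely many escalators up to isometry. Each escalator has at most one $S_\rho$-truant. By Proposition \ref{pr:escalators} (0) $\Leftrightarrow$ c), every $S$-critical element is the $S_\rho$-truant of some $S_\rho$-escalator, so the set of $S$-critical elements is finite.

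The main obstacle is already handled by Lemma \ref{le:finitepaths}; the remaining work is the bookkeeping above. The one subtle point is making sure the tree is finitely branching up to isometry, so that König applies. Without that, a bounded depth would not by itself give finiteness of the escalators.
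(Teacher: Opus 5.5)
Your proposal is correct and is essentially the paper's own proof. It applies K\H{o}nig's lemma to the escalation tree, which is locally finite by Lemma~\ref{le:finitedegree} and has no infinite branch by Lemma~\ref{le:finitepaths}, takes $N$ to be the depth of the tree, and reads off finiteness of the critical elements from Proposition~\ref{pr:escalators}. One small slip: for the fixed order $\rho$ you need the implication 0)~$\Rightarrow$~d) of Proposition~\ref{pr:escalators}, as the paper uses, because c) only gives an escalator for \emph{some} admissible order, possibly different from $\rho$.
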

	\begin{proof}
		Consider the rooted tree with root $\{\vectorstyle{0}\}$ where the children of any vertex are the isometry classes of its $S_\rho$-escalations. (Here, a technical choice has to be made. We decided to force the graph to be a tree, since this corresponds to how it is practically obtained when one performs the escalation. However, this means that we must allow several vertices to correspond to the same isometry class of lattices, only obtained via a different sequence of escalations -- i.e.\ the vertex \enquote{remembers its parent}. We could also have given up on the tree property and just produce an oriented graph. This would decrease the number of vertices but make the arguments slightly trickier.)
		
		By Lemma \ref{le:finitedegree}, this tree is locally finite (every vertex has finite degree); by Lemma \ref{le:finitepaths}, it contains no infinite paths. Thus, by Kőnig's lemma, it must be a finite tree. This implies both the existence of $N$ (the depth of the tree), and the finite number of $S_\rho$-escalators.
		
		Since there are finitely many $S_\rho$-escalators and each has at most one $S_\rho$-truant (it can be $S$-universal), part d) of Proposition \ref{pr:escalators} shows that there are only finitely many $S$-critical elements.
	\end{proof}
	
	Now, since $S$-critical elements form an $S$-criterion set, we have concluded a separate proof for the existence of a finite $S$-criterion set:
	
	\begin{remark} \label{re:finite}
		The results of this section together with the paper \cite{KKR}, particularly its third section, are entirely self-contained (they only use classical results on lattices, such as the asymptotic local--global principle of Hsia–Kitaoka–Kneser \cite{HKK}) -- in particular, by proving Theorem \ref{th:finite} we removed the necessity of relying on \cite{CO} in the proof of \cite[Thm.~3.3(d)]{KKR}.
		
		Of course, the results of the present paper and of \cite{KKR} do not apply to the more general situation when $S$ contains quadratic lattices; there, one has to rely on \cite{CO}, and criterion sets are typically not unique.
	\end{remark}
	
	If we had an oracle able to decide whether a lattice is $S$-universal, then the following algorithm would always compute the $S$-criterion set in finite time:
	
	\begin{algorithm} \label{al:escalation}
		Input: A set $S$. Output: The minimal criterion set $\CC_S$.
		\begin{enumerate}
			\item Choose an admissible order $\rho$ on $S$.
			\item Start with $\{\text{lattices to be solved}\} = \bigl\{ \{\vectorstyle{0}\} \bigr\}$ and $\CC_S=\{\}$.
			\item Pick a lattice $L$ that has not yet been solved. If there is no such lattice, terminate. \label{it:pick}
			\item If $L$ is $S$-universal, mark it as solved and return to \ref*{it:pick}. Otherwise put $\alpha = \Srtruant(L)$.
			\item Add $\alpha$ into $\CC_S$.
			\item Compute all $S_{\rho}$-escalations of $L$ and add them to $\{\text{lattices to be solved}\}$. Mark $L$ as solved.
			\item Return back to \ref*{it:pick}.
		\end{enumerate}
	\end{algorithm}
	
	Of course, the problem is that the set $\CC_S$ is precisely the oracle which we would need in Algorithm \ref{al:escalation} in order to compute $\CC_S$. In practice, the algorithm can be implemented as follows: Pick a bound on norm $N$. Instead of checking universality, check only that all elements up to norm $N$ are represented. Then the obtained set is a subset of $\CC_S$, and if we manage (by any means) to prove universality of all the escalators that represent all elements with norm under $N$, then we know that we have computed the whole $\CC_S$.
	
	Another practical problem is that there might simply be too many escalators and the computation will be too slow or require too much memory. Even in the case of $\qq{3}$ in Section \ref{se:sqrt3} we shall use quite a lot of tricks so that our computation is not too tedious and does not require pages and pages of computer output.
	
	\begin{remark}
		Formulating escalations just for quadratic forms = free lattices instead of for \emph{all} quadratic lattices would fail, since a sublattice of a free lattice is not necessarily free. Thus, even if the goal is to compute the criterion set $\Cfr_S$ for quadratic forms, it is necessary to implement the escalation procedure for all lattices, as described in this section, and then use the fact that $\Cfr_S=\CC_S$, see \cite[Thm.~5.1]{KKR}.
		
		Non-free escalators can occur because of situations similar to that of Example \ref{ex:notincrease} -- if $E$ is an escalator which is a free lattice, and $E'$ is its escalation such that $\rank E = \rank E'$, there is no reason why $E'$ should be free.
		
		Of course, in our explicit computations in Sections \ref{se:sqrt2}, \ref{se:sqrt3} and \ref{se:diagExplicit}, we use fields with $h(K)=1$, where this remark is irrelevant, as all lattices are free in that case.
	\end{remark}

	\section{Escalation theory for diagonal forms} \label{se:diagonal}
	
	Escalations for diagonal forms are slightly trickier and there is a notable difference to Section \ref{se:finite}. The analogue of the crucial Proposition \ref{pr:escalators} fails to hold if one naively works with \enquote{diagonal escalators} in the sense of \enquote{escalators which happen to be diagonal}. The problem stems from the fact that while every sublattice of a classical lattice is again classical, most sublattices of diagonal lattices are not diagonal.
	
	It would not be necessary to develop the escalation theory for diagonal forms just to prove the finiteness of $\Cdiag_S$; it already follows from the known inclusion $\Cdiag_S\subset \CC_S$ in \cite[Thm.~3.3]{KKR} together with finiteness of $\CC_S$ (previous section or \cite{CO}). But besides being used in the finiteness proof, escalators are useful for computing the criterion set.
	
	Therefore, we present the appropriate analogue for the diagonal setting. We call it \emph{pseudoescalation}, because the definition differs quite significantly from the escalations introduced in Definition \ref{de:escalation}. One of the differences is that while escalations are geometrical in nature, pseudoescalations are purely algebraic.
	
	\begin{definition} \label{de:pseudoescalation}
		Let $D$ be a diagonal form with $S$-truant $\alpha$. Then its \emph{$S$-pseudoescalation} is any quadratic form
		\[
		D' = D \perp \qf{\gamma_1, \ldots, \gamma_n}
		\]
		such that $\alpha\to D'$ but $\alpha \not\to D \perp \qf{\gamma_1,\ldots, \gamma_{i-1},\gamma_{i+1}, \ldots,\gamma_n}$ for all $1 \leq i \leq n$.
		
		Moreover, if $\alpha$ is the $S_\rho$-truant of $D$, then $D'$ is an \emph{$S_\rho$-pseudoescalation} of $D$.
		
		Finally, a diagonal form is an \emph{$S$-pseudoescalator} if it can be obtained by finitely many $S$-pseudoescalations from the zero lattice $\{\vectorstyle{0}\}$ (which we view as the empty diagonal form $\qf{}$). Analogously we define an \emph{$S_\rho$-pseudoescalator}.
	\end{definition}

	Note that while escalation always increases the rank by at most one (and the rank may also stay the same, see Example \ref{ex:notincrease}), pseudoescalation increases the rank by at least one, but possibly much more. For example, over $\Z$, if $S = \{1,7,\ldots \text{(anything $>7$)}\}$, then the form $\qf{1}$ with $S$-truant $7$ has $\qf{1,1,1,1}$ as one of the $S$-pseudoescalations. Fortunately, there are always only finitely many $S$-pseudoescalations (see later in Lemma \ref{le:DIfinitedegree}), since any given element admits only finitely many decompositions as a sum of totally positive integers.
	
	However, in the most important case when $S=\OO$, i.e.\ when we compute $\Cdiag_K$, every pseudoescalation increases the rank by exactly one:
	
	\begin{proposition} \label{pr:diagonalpractical}
		Let $S=\OO$ and let $D$ be a diagonal form with $\Srtruant(D) = \alpha\U_K^2$. Denote $M_{\alpha} = \{\beta \in \OKPlus \mid \beta \preceq \alpha\}$ and $N_{\alpha} = \{\gamma \in \OO \mid \exists \beta \in M_{\alpha} \text{ such that } \beta \to \qf{\gamma} \}$. (In other words, $N_\alpha$ contains all integral elements of the form $\delta^{-2}\beta$ where $\delta \in \OK$ and $\beta \in M_\alpha$.) Then the $S_\rho$-pseudoescalations of $D$ are precisely the lattices $D \perp \qf{\gamma}$ for $\gamma \in N_{\alpha}$.
	\end{proposition}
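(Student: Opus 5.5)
Throughout, $D=\qf{\alpha_1,\ldots,\alpha_m}$ with $\Srtruant(D)=\alpha$. We prove two inclusions, and both rest on one observation: for $S=\OO$, the form $D$ represents every $\beta\in\OKPlus$ with $\beta\prec\alpha$, and in fact every $\beta\in\OKPlus$ with $\beta\preceq\alpha$ and $\beta\notin\alpha\U_K^2$. To see this, note that $\beta\preceq\alpha$ with $\beta\ne\alpha$ gives $\NN(\beta)<\NN(\alpha)$: the component-wise inequalities are all $\le$ and at least one is strict. Hence $\beta<_\rho\alpha$ by admissibility of $\rho$, so $\beta\to D$. (If $\beta=\alpha$ we are of course not claiming representation.)

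\emph{Every $D\perp\qf{\gamma}$ with $\gamma\in N_\alpha$ is a pseudoescalation.} Take $\beta\in M_\alpha$ and $\delta\in\OK$ with $\beta=\gamma\delta^2$. Write $\alpha=\beta+(\alpha-\beta)$. If $\beta=\alpha$, then $\alpha\to\qf{\gamma}$ directly. Otherwise $\alpha-\beta\succ 0$ fails in general; it only satisfies $\alpha-\beta\succeq 0$. If $\alpha-\beta=0$ we are done. If $\alpha-\beta\neq 0$ but has a zero conjugate, this is impossible, since a nonzero algebraic integer has all conjugates nonzero. So $\alpha-\beta\in\OKPlus$, and $\alpha-\beta\preceq\alpha$ with $\alpha-\beta\ne\alpha$; by the observation, $\alpha-\beta\to D$. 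Hence $\alpha=(\alpha-\beta)+\gamma\delta^2\to D\perp\qf{\gamma}$. The minimality condition in Definition~\ref{de:pseudoescalation} for $n=1$ just says $\alpha\not\to D$, which holds.

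\emph{Every pseudoescalation has this form.} Let $D'=D\perp\qf{\gamma_1,\ldots,\gamma_n}$ be a pseudoescalation. Then $\alpha=Q_D(\vv)+\sum_i\gamma_ix_i^2$. By minimality every $x_i\neq 0$, since otherwise dropping $\gamma_i$ would still represent $\alpha$. So each $\gamma_ix_i^2\in\OKPlus$, and the remaining terms are all $\succeq 0$, whence $\beta_1:=\gamma_1x_1^2\preceq\alpha$. This gives $\gamma_1\in N_\alpha$. It remains to show $n=1$. Suppose $n\ge2$ and put $\beta'=\alpha-\gamma_1x_1^2=Q_D(\vv)+\sum_{i\ge2}\gamma_ix_i^2$. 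This is totally positive, because $\gamma_2x_2^2\succ0$ and the rest is $\succeq0$. It also satisfies $\beta'\preceq\alpha$ and $\beta'\ne\alpha$, so by the observation $\beta'\to D$. Then $\alpha=\beta'+\gamma_1x_1^2\to D\perp\qf{\gamma_1}$, contradicting the minimality condition, which forbids dropping $\gamma_2$. Hence $n=1$ and $D'=D\perp\qf{\gamma_1}$ with $\gamma_1\in N_\alpha$.

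The main point to get right is that the strict inequality of norms follows from $\preceq$ together with inequality of elements. This is where $S=\OO$ is used essentially: for a general $S$, the element $\beta'$ need not lie in $S$, and then the argument breaks down, as the example with $\qf{1,1,1,1}$ shows. Finally, $\gamma$ only matters up to $\U_K^2$, which is consistent with $N_\alpha\subset\OO$.
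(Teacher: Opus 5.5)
Your proof is correct and follows essentially the same route as the paper. Both rest on the fact that $D$ represents every element of $M_\alpha\setminus\{\alpha\}$, since these have strictly smaller norm; both inclusions follow from this, and minimality forces $n=1$. You just spell out steps the paper leaves implicit, namely the strict norm inequality from $\beta\preceq\alpha$, $\beta\neq\alpha$, and the explicit contradiction obtained by dropping $\gamma_2$.
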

	Note that $M_\alpha / \U_K^2 \subset N_\alpha$ and sometimes these two sets are the same (e.g.\ if $K=\Q$). Generally, however, one must use $N_\alpha$, as it is not automatically true that $\gamma \preceq \gamma \omega^2$ for $\gamma \in \OKPlus$ and $0 \neq\omega \in \OK$.
	\begin{proof}
		Thanks to the choice of $S$, the form $D$ represents all elements of strictly smaller norm than $\alpha$; in particular, all elements of $M_\alpha \setminus \{\alpha\}$. Thus any form $D \perp L$ represents $\alpha$ if and only if $L$ represents some element of $M_\alpha$.
		
		Hence $D \perp \qf{\gamma}$ for $\gamma \in N_\alpha$ represents $\alpha$ and thus is a valid pseudoescalation. On the other hand, if $D \perp \qf{\gamma_1,\ldots,\gamma_n}$ is a pseudoescalation, then at least one of the $\gamma_i$ must lie in $N_\alpha$; but then the condition of minimality means that $i=n=1$ and the pseudoescalation is of the desired form.
	\end{proof}
	
	So the procedure of computing all pseudoescalators, which is the basis for computing the set $\Cdiag_K$, is as follows: Use any admissible order $\rho$ on $S = \OO$. Start with the empty diagonal form $\qf{}$ and then iteratively find all pseudoescalations of all the already known pseudoescalators -- that is, take any pseudoescalator $D$ and then:
	\begin{enumerate}
		\item Compute $\alpha\U_K^2 = \Srtruant(D)$. (If it does not exist, then $D$ is universal and we can continue with another unsolved pseudoescalator.)
		\item Find all $\beta \preceq \alpha$.
		\item For those $\beta$ which are not squarefree, divide them by all possible nontrivial squares. The collection of all such elements will be $N_\alpha$.
		\item New pseudoescalators will be $D \perp \qf{\gamma}$ for $\gamma \in N_{\alpha}$.
	\end{enumerate}
	
	This is easier to implement and less tedious to compute by hand than the general $S$-pseudoescalation from Definition \ref{de:pseudoescalation} that has to be used if $S \neq \OO$.
	
	\medskip
	
	The principal relation between lattices in Section 3 was simple inclusion. However, for diagonal forms, inclusion does not behave particularly well; note that for example, $\qf{2,2}$ is a sublattice of $\qf{1,1}$. For our theory to work and for computations to run reasonably fast, we shall work with a more restrictive relation:
	
	Just for the sake of this section, let us say that $D_1$ is a \emph{principal diagonal subform} of the diagonal form $D = \qf{\alpha_1,\ldots,\alpha_n}$ if $D_1 = \qf{\alpha_{i_1},\ldots,\alpha_{i_k}}$ where $\{i_1,\ldots,i_k\}$ is a subset of $\{1,\ldots,n\}$.
	
	While the definitions of pseudoescalators and of a principal diagonal subform differ significantly from the definitions of escalators and of a sublattice, the rest of the theory of Section \ref{se:finite} works almost verbatim. We present the statements so that they can be referred to, but we only give the parts of the proofs that differ from their counterparts from previous section.
	
	\begin{proposition} \label{pr:DIsubescalators}
		Let $\rho$ be a fixed admissible order and $D$ a diagonal form. Then:
		\begin{enumerate}
			\item $D$ is $S$-universal if and only if it has an $S$-universal $S_{\rho}$-pseudoescalator as a principal diagonal subform. \label{it:DI1}
			\item If $\Srtruant(D)=\alpha$, then $D$ has an $S_{\rho}$-pseudoescalator $E$ with $\Srtruant(E)=\alpha$ as a principal diagonal subform. \label{it:DI2}
		\end{enumerate}
	\end{proposition}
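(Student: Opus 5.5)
We imitate the proof of Proposition \ref{pr:subescalators}, replacing \enquote{sublattice generated by adding a vector} with \enquote{principal diagonal subform obtained by adding a minimal set of diagonal coefficients}. Write $D=\qf{\alpha_1,\ldots,\alpha_n}$. We build a chain $E_0\subset E_1\subset\cdots$ of principal diagonal subforms of $D$, starting with $E_0=\qf{}$. Each $E_{i+1}$ will be an $S_\rho$-pseudoescalation of $E_i$, so every $E_i$ is an $S_\rho$-pseudoescalator.

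For part \ref*{it:DI2}, suppose $E_i$ is already constructed and put $\beta=\Srtruant(E_i)$. If $\beta=\alpha$, we stop and take $E=E_i$. Otherwise $\beta<_\rho\alpha$, so $\beta\to D$. Let $I$ be the set of indices used in $E_i$. Since $\beta\to D=E_i\perp\qf{\alpha_j: j\notin I}$, there is a subset $J\subset\{1,\ldots,n\}\setminus I$ such that $\beta\to E_i\perp\qf{\alpha_j : j\in J}$. Choose such a $J$ minimal with respect to inclusion. This $J$ is nonempty because $\beta\not\to E_i$. By minimality, removing any single $\alpha_j$ with $j\in J$ destroys the representation of $\beta$. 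This is exactly the condition of Definition \ref{de:pseudoescalation}, so $E_{i+1}=E_i\perp\qf{\alpha_j: j\in J}$ is an $S_\rho$-pseudoescalation of $E_i$, and it is clearly a principal diagonal subform of $D$.

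Each step strictly enlarges the index set, because $J\neq\emptyset$. So the process terminates after at most $n$ steps. It cannot terminate by exhausting the indices with $\beta\neq\alpha$: once all indices are used, $E_i=D$ up to reordering, and then $\Srtruant(E_i)=\alpha$. Hence we end with a principal diagonal subform $E$ satisfying $\Srtruant(E)=\alpha$.

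For part \ref*{it:DI1}, one direction is trivial: if a principal diagonal subform of $D$ is $S$-universal, so is $D$, since $D$ is that subform plus an orthogonal summand. For the converse, run the same construction but stop only when $E_i$ becomes $S$-universal. While $E_i$ is not $S$-universal, its truant $\beta$ exists and is represented by $D$, so the step can be performed. The rank bound $n$ again forces termination, and the only possible stopping point is an $S$-universal $E_i$.

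This argument is simpler than the one for Proposition \ref{pr:subescalators}: the rank now increases at every step, so we avoid the argument about finitely many overlattices of the same rank. The only point needing care is the choice of a minimal $J$. It must be made among the \emph{unused} coefficients of $D$, so that the new form remains a principal diagonal subform. Minimality must also be checked against the exact wording of Definition \ref{de:pseudoescalation}, which removes one $\gamma_i$ at a time. Inclusion-minimality gives precisely this condition.
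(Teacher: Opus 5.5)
Your proof is correct and follows essentially the same route as the paper. The paper also builds $E_0=\qf{}\subset E_1\subset\cdots$ inside $D$, at each step adjoining a minimal set of further diagonal coefficients of $D$ so that the current $S_\rho$-truant becomes represented (which is exactly a pseudoescalation), and it concludes from the strict growth of rank, bounded by $\rank(D)$. Your bookkeeping with unused index sets and your remark that $\beta<_\rho\alpha$ forces $\beta\to D$ only make explicit what the paper phrases as choosing a minimal principal diagonal subform that contains $E_i$ and represents the truant.
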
 
	\begin{proof}
		The proof has the same structure and the same main idea as that of Proposition \ref{pr:subescalators}. Put $E_0=\qf{}$ and iteratively define $\beta_{i+1} = \Srtruant(E_i)$. If $E_i$ is $S$-universal (for the proof of \ref*{it:DI1}) or if $\beta_{i+1}=\alpha$ (for the proof of \ref*{it:DI2}), then stop and put $E=E_i$. Otherwise choose $E_{i+1}$ as any principal diagonal subform of $D$ that represents $\beta_{i+1}$ and contains $E_i$ as a principal diagonal subform (we know that such a principal subform exists, as $D$ itself satisfies both), and is minimal with these properties. Then $E_{i+1}$ is an $S_\rho$-escalation of $E_i$, and by induction an $S_{\rho}$-escalator. Since $\rank(E_{i+1}) > \rank(E_i)$ and all the ranks are bounded by $\rank (D)$, the construction must eventually stop and $E$ will be the desired $S_\rho$-pseudoescalator which is a principal diagonal subform of $D$.
	\end{proof}
	
	\begin{proposition}\label{pr:DIescalators}
		For $\alpha \in S$, the following are equivalent:
		\begin{enumerate}
			\item[0)] $\alpha$ is $(S,\diag)$-critical.
			\item[a)] $\alpha$ is an $S$-truant of some diagonal form.
			\item[b)] $\alpha$ is an $S$-truant of an $S$-pseudoescalator.
			\item[c)] For some admissible total order $\rho$, $\alpha$ is the $S_\rho$-truant of some $S_\rho$-pseudoescalator.
			\item[d)] For every admissible total order $\rho$, $\alpha$ is the $S_\rho$-truant of some $S_\rho$-pseudoescalator.
		\end{enumerate}
	\end{proposition}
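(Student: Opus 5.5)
The plan mirrors the proof of Proposition \ref{pr:escalators}. The equivalence between 0) and a) is the diagonal version of Proposition \ref{pr:KKRcritical}, which the paper states holds analogously for ($S,\diag$)-critical elements; we take this as given.

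The implications d) $\Rightarrow$ c) $\Rightarrow$ b) $\Rightarrow$ a) are immediate. For d) $\Rightarrow$ c) we only need one admissible order to exist: order $S$ by norm and break ties within each norm level arbitrarily. This is possible because each norm level of $\OO$ is finite. The step c) $\Rightarrow$ b) holds because the $S_\rho$-truant is in particular an $S$-truant, and every $S_\rho$-pseudoescalation is an $S$-pseudoescalation, so every $S_\rho$-pseudoescalator is an $S$-pseudoescalator. Finally b) $\Rightarrow$ a) holds because a pseudoescalator is by definition a diagonal form, namely the empty form extended by diagonal summands.

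The substantive implication is 0) $\Rightarrow$ d).
\begin{enumerate}
\item Since $\alpha$ is $(S,\diag)$-critical, take a diagonal form $D$ which represents every element of $S$ except $\alpha\U_K^2$.
\item Fix an arbitrary admissible order $\rho$. Every $\beta \in S$ with $\beta <_\rho \alpha$ is represented by $D$, while $\alpha$ is not. Hence $\Srtruant(D)=\alpha$, and this holds regardless of which $\rho$ was chosen.
\item Apply Proposition \ref{pr:DIsubescalators}\ref{it:DI2} to $D$. It yields an $S_\rho$-pseudoescalator $E$, a principal diagonal subform of $D$, with $\Srtruant(E)=\alpha$. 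This is exactly d).
\end{enumerate}

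The only point needing care is that Proposition \ref{pr:DIsubescalators} really produces pseudoescalators. In its proof, each minimal principal subform $E_{i+1}\supset E_i$ that represents $\Srtruant(E_i)$ must be an $S_\rho$-pseudoescalation of $E_i$. This is the minimality condition in Definition \ref{de:pseudoescalation}, and it is taken as established there. So no real obstacle remains beyond citing the diagonal analogue of Proposition \ref{pr:KKRcritical}.
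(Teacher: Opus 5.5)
Your proof is correct and follows the paper's route. The paper likewise takes 0) $\Leftrightarrow$ a) from the diagonal version of Proposition \ref{pr:KKRcritical}, treats d) $\Rightarrow$ c) $\Rightarrow$ b) $\Rightarrow$ a) as trivial, and proves 0) $\Rightarrow$ d) by applying Proposition \ref{pr:DIsubescalators}\ref{it:DI2} to a diagonal form that misses only $\alpha\U_K^2$. Your extra checks (that an admissible order exists, and that the minimal principal subforms are genuine pseudoescalations) only make explicit what the paper leaves implicit.
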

	\begin{proof}
		The same as for Proposition \ref{pr:escalators}. Only, instead of applying Proposition \ref{pr:subescalators}\ref*{it:2}, one applies Proposition \ref{pr:DIsubescalators}\ref*{it:DI2}.
	\end{proof}
	
	Again, a computation of pseudoescalations generates a tree structure, and again, we can prove its finiteness. 
	
	\begin{lemma}\label{le:DIfinitedegree}
		Every diagonal form $D$ has only finitely many $S_\rho$-pseudoescalations (up to isometry).
	\end{lemma}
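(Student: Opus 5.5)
If $D$ is $S$-universal, it has no $S_\rho$-pseudoescalations and there is nothing to prove. Otherwise put $\alpha=\Srtruant(D)$ and fix any $S_\rho$-pseudoescalation $D'=D\perp\qf{\gamma_1,\ldots,\gamma_n}$. The plan is to bound $n$, and then each $\gamma_i$ up to $\U_K^2$, in terms of $\alpha$ alone. Since each $\qf{\gamma_i}$ is determined up to isometry by the class $\gamma_i\U_K^2$, this gives only finitely many isometry classes of $D'$.

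First, write down a representation $\alpha=Q_D(\vv)+\sum_{i=1}^n\gamma_i x_i^2$ with $x_i\in\OK$. By minimality, every $x_i$ must be nonzero: if $x_j=0$, then $\alpha$ would already be represented by $D\perp\qf{\gamma_1,\ldots,\gamma_{j-1},\gamma_{j+1},\ldots,\gamma_n}$, which the definition forbids. Each summand $\gamma_ix_i^2$ is therefore totally positive, and $Q_D(\vv)\succeq0$. So $\alpha=\beta_0+\beta_1+\cdots+\beta_n$ with $\beta_0\succeq0$, $\beta_i=\gamma_ix_i^2\in\OKPlus$ and $\beta_i\preceq\alpha$.

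Next, use the trace to bound $n$. Every element of $\OKPlus$ has trace at least $[K:\Q]$ by AM--GM on the conjugates, since its norm is a positive integer. Taking traces in the decomposition gives $n[K:\Q]\le\Tr(\alpha)$, so $n\le\Tr(\alpha)/[K:\Q]$.

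Then bound the coefficients. As recalled in Section \ref{se:prelims}, there are only finitely many $\beta\in\OKPlus$ with $\beta\preceq\alpha$, and each $\beta_i$ is one of them. For a fixed such $\beta$, the $\gamma\in\OKPlus$ with $\beta=\gamma x^2$ for some nonzero $x\in\OK$ satisfy $\NN(\gamma)\le\NN(\beta)$. There are only finitely many elements of $\OO$ of bounded norm, so each $\gamma_i$ lies in a finite set modulo $\U_K^2$.

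Combining the bound on $n$ with the finiteness of the possible $\gamma_i\U_K^2$ yields finitely many forms $D'$ up to isometry, and reordering the $\gamma_i$ only produces isometric forms. The one step that needs care is the observation that minimality forces every $x_i\neq0$. Without it the $\gamma_i$ would be unconstrained, and this is exactly where the minimality clause in Definition \ref{de:pseudoescalation} enters. Everything else is routine finiteness.
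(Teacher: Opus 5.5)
Your proof is correct and takes essentially the same route as the paper. Both arguments decompose $\alpha$ into finitely many totally positive summands $\beta\preceq\alpha$ and observe that each such $\beta$ is represented by only finitely many unary forms $\qf{\gamma}$ up to $\U_K^2$. You also spell out the step the paper leaves as \enquote{easily seen}: minimality forces every $x_i\neq 0$, and the trace bound limits the number $n$ of added coefficients.
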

	\begin{proof}
		If $D$ is $S$-universal, then it has no $S$-escalations at all. Hence, let $\alpha = \Srtruant(D)$. Now, since there are only finitely many $\beta\in\OKPlus$ such that $\beta \preceq \alpha$ (for example because it implies $\Tr \beta \leq \Tr \alpha$, and there are only finitely many elements of $\OKPlus$ with a given trace), there are only finitely many ways how to express $\alpha = \beta_1+\cdots + \beta_k$ where $k\in\Z^+$ and all $\beta_i\in\OKPlus$. Also, for every $\beta_i$ there are only finitely many unary diagonal forms that represent it, namely all those $\qf{\gamma}$ where $\beta_i/\gamma \in \OK$ is a square. From this it is easily seen that $D$ has only finitely many $S$-pseudoescalations.
	\end{proof}
	
	\begin{lemma} \label{le:DIfinitepaths}
		There cannot exist an infinite sequence $(D_i)_{i=0}^{\infty}$ where each $D_{i+1}$ is an $S_\rho$-pseudoescalation of $D_i$.
	\end{lemma}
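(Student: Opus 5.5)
The plan is to follow the proof of Lemma \ref{le:finitepaths}, with one simplification: a pseudoescalation always raises the rank by at least one, so there is no need to deal with overlattices of the same rank. Suppose, for contradiction, that $(D_i)_{i=0}^\infty$ is an infinite sequence in which each $D_{i+1}$ is an $S_\rho$-pseudoescalation of $D_i$. None of the $D_i$ can be $S$-universal, since every $D_i$ has a pseudoescalation and an $S$-universal form has none. By Definition \ref{de:pseudoescalation}, $D_{i+1}=D_i\perp\qf{\gamma_1,\ldots,\gamma_n}$ with $n\ge1$, so $\rank(D_{i+1})>\rank(D_i)$. Also, $D_{i+1}$ represents $\Srtruant(D_i)$ as well as everything $D_i$ represents. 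The $S_\rho$-truants therefore strictly increase in the order $\rho$, and at each step at least one new element of $S$ becomes represented. In particular, $\rank(D_N)\ge 5$ as soon as $N\ge 5$.

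Next I will show that for some $N'$, the form $D_{N'}$ is locally $S$-universal. Since the $D_i$ form a chain of principal diagonal subforms, local universality then persists for all later $D_i$. I will reuse the argument from \cite[Prop.~3.1]{KKR}, exactly as cited in Lemma \ref{le:finitepaths}.

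\begin{itemize}
\item Only finitely many primes $\p$ matter. These are the dyadic primes and the primes dividing the discriminant of some fixed $D_{i_0}$ of rank at least $3$ or $5$. At all other primes, a unimodular-enough form of that rank already represents all of $\OK[K_\p]$ locally.
\item For each of these $\p$, there are finitely many square classes of $K_\p^\times$. For each square class that meets $S$, I pick the element of $S$ in that class with minimal $\p$-valuation that is smallest under $\rho$.
\item Every element of $S$ smaller under $\rho$ than the current truant is globally represented, hence locally represented. Since the truants increase through $S$, each of these finitely many chosen elements is eventually globally represented.
\end{itemize}

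Here I expect the main obstacle. Representing one minimal-valuation element in each square class must imply local representation of every element of $S$ in that class. I will deal with this as in \cite{KKR}. A form of large enough rank which represents an element $\beta$ of a given square class also represents $\beta\pi^{2k}$ for all $k\ge0$. Once the rank is at least $5$, a form that represents an element of each square class with valuation near the minimum represents everything with larger valuation in that class. Because the construction of \cite{KKR} is phrased for arbitrary lattices, diagonality causes no difference here.

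Finally, I take $M=\max\{N,N'\}$. Then $D_M$ has rank at least $5$ and is locally $S$-universal. By Hsia--Kitaoka--Kneser \cite{HKK}, $D_M$ represents every element of $S$ except finitely many elements of $\OO$, namely those of norm at most some constant $C$. Each further pseudoescalation represents at least one new element of $S$, so after finitely many more steps some $D_j$ is $S$-universal. Such a $D_j$ has no $S$-truant and hence no pseudoescalation, which contradicts the infinitude of the sequence.
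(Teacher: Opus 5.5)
Your proposal is correct and follows the paper's own proof essentially step for step. Each pseudoescalation raises the rank, so $D_5$ has rank at least $5$; local $S$-universality of some $D_{N'}$ is obtained as in \cite[Prop.~3.1]{KKR}; and Hsia--Kitaoka--Kneser, together with the strict growth of the represented part of $S$, finishes the argument. The step you flag as the main obstacle is actually trivial and needs no rank hypothesis: if $\beta=Q(\vv)$ locally has minimal $\p$-valuation in its square class, then any other element of $S$ in that class is $\beta u^2$ with $u$ a $\p$-adic integer, and hence equals $Q(u\vv)$.
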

	\begin{proof}
		The proof is the same as for Lemma \ref{le:finitepaths}; in fact, some technical details are even somewhat simpler, since we can directly say that $D_5$ has rank at least $5$. Thus, one has to prove that some $D_N$ is locally $S$-universal, and then apply the asymptotic local--global principle of Hsia--Kitaoka--Kneser \cite{HKK} to $D_{\max\{5,N\}}$.
	\end{proof}
	
	Now we prove that the pseudoescalation procedure terminates. As mentioned, we already have two other proofs of finiteness of $\Cdiag_S$ (one using \cite{CO}, the other using Theorem \ref{th:finite}) based on $\Cdiag_S \subset \CC_S$. However, it is important that the pseudoescalation is a finite procedure so that one can implement it for a practical computation of $\Cdiag_S$ (see also our computations in Section \ref{se:diagExplicit} which were performed by hand).
	
	\begin{theorem}\label{th:DIfinite}
		Given any $S \subset \OO$ and an admissible total order $\rho$, there exists an $N$ such that it is impossible to perform more than $N$ consecutive $S_\rho$-pseudoescalations starting from $\qf{}=\{\vectorstyle{0}\}$.
		
		In particular, there exist only finitely many $S_\rho$-pseudoescalators (up to isometry) and finitely many $(S,\mathrm{diag})$-critical elements.
	\end{theorem}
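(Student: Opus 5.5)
The proof will mirror that of Theorem \ref{th:finite}, with Lemmas \ref{le:DIfinitedegree} and \ref{le:DIfinitepaths} in place of Lemmas \ref{le:finitedegree} and \ref{le:finitepaths}.

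First I build the rooted tree of pseudoescalations. Its root is the empty diagonal form $\qf{}$. The children of a vertex $D$ are the isometry classes of the $S_\rho$-pseudoescalations of $D$. As in the proof of Theorem \ref{th:finite}, vertices remember their parent, so several vertices may correspond to the same isometry class but the graph is genuinely a tree.

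Next I check the two hypotheses of Kőnig's lemma. By Lemma \ref{le:DIfinitedegree}, every vertex has only finitely many children, so the tree is locally finite. By Lemma \ref{le:DIfinitepaths}, there is no infinite sequence of consecutive $S_\rho$-pseudoescalations, so the tree has no infinite path starting at the root. Kőnig's lemma then gives that the tree is finite. Its depth is the required $N$, and its vertices include (representatives of) all $S_\rho$-pseudoescalators, so there are only finitely many of these up to isometry.

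Finally, each $S_\rho$-pseudoescalator has at most one $S_\rho$-truant; it has none when it is $S$-universal. By Proposition \ref{pr:DIescalators}, implication 0) $\Rightarrow$ d), every $(S,\diag)$-critical element is the $S_\rho$-truant of some $S_\rho$-pseudoescalator. Hence there are only finitely many $(S,\diag)$-critical elements.

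The substantive difficulty is not in this assembly but in the two lemmas, which may be assumed. The one point needing care is the isometry bookkeeping. Lemma \ref{le:DIfinitedegree} counts pseudoescalations up to isometry, whereas pseudoescalations are defined algebraically, as forms $D\perp\qf{\gamma_1,\ldots,\gamma_n}$. I would handle this by labelling each child of $D$ by the actual diagonal form $D\perp\qf{\gamma_1,\ldots,\gamma_n}$. The proof of Lemma \ref{le:DIfinitedegree} already shows that only finitely many tuples $(\gamma_1,\ldots,\gamma_n)$ occur: they come from decompositions of $\alpha$ together with the finitely many unary forms representing each summand. So local finiteness holds even at the level of forms, and the tree argument goes through unchanged.
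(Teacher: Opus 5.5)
Your proposal is correct and is essentially the paper's own argument: the paper proves Theorem \ref{th:DIfinite} simply by repeating the proof of Theorem \ref{th:finite}, i.e.\ K\H{o}nig's lemma on the locally finite tree of pseudoescalations via Lemmas \ref{le:DIfinitedegree} and \ref{le:DIfinitepaths}, followed by Proposition \ref{pr:DIescalators}, part d), for the critical elements. Your extra remark about labelling children by the actual forms $D\perp\qf{\gamma_1,\ldots,\gamma_n}$ is a harmless refinement of the same bookkeeping.
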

	\begin{proof}
		The same as for Theorem \ref{th:finite}.
	\end{proof}
	
	The computation of $\Cdiag_S$ or decision whether a given $\alpha$ is in $\Cdiag_S$ can be done analogously to the previous section, in particular to Algorithm \ref{al:escalation}. Namely, if we had an oracle able to decide whether a diagonal form is $S$-universal, then the following algorithm would always compute the $S$-criterion set in finite time:
	
	\begin{algorithm} \label{al:DIescalation}
		Input: A set $S$. Output: The diagonal criterion set $\Cdiag_S$.
		\begin{enumerate}
			\item Choose an admissible order $\rho$ on $S$.
			\item Start with $\{\text{forms to be solved}\} = \bigl\{ \qf{} \bigr\}$ and $\Cdiag_S=\{\}$.
			\item Pick a form $D$ that has not yet been solved. If there is no such form, terminate. \label{it:DIpick}
			\item If $D$ is $S$-universal, mark it as solved and return to \ref*{it:DIpick}. Otherwise put $\alpha = \Srtruant(D)$.
			\item Add $\alpha$ into $\Cdiag_S$.
			\item Compute all $S_{\rho}$-pseudoescalations of $D$ and add them to $\{\text{forms to be solved}\}$. (If $S=\OO$, i.e.\ if the aim is to compute $\Cdiag_K$, then use the more explicit description of pseudoescalations from Proposition \ref{pr:diagonalpractical}.) Mark $D$ as solved.
			\item Return back to \ref*{it:DIpick}.
		\end{enumerate}
	\end{algorithm}
	
	In fact, while the assumption \enquote{if we had an oracle able to decide whether a given lattice is $S$-universal} was a rather unrealistic desire in Algorithm \ref{al:escalation}, here we indeed are in such a fortunate situation at least in one specific case: Since Lee found an explicit criterion set for classical forms over $\qq{5}$, we are able to compute $\Cdiag_{\qq{5}}$ in Theorem \ref{th:diagonal}.
	
	We are in a similar situation for certain subsets of $\Z$. Let us illustrate the theory by computing the diagonal criterion for odd-universality.
	
	\begin{example}
		Using the knowledge $\Ccl_{\mathrm{odd}}=\{1,3,5,7,11,15,33\}$, see \cite[Thm.~1.5]{BC}, we shall prove that $\Cdiag_{\mathrm{odd}}=\Ccl_{\mathrm{odd}}$.
		
		In fact, for a clean proof of this equality, it is enough to exhibit a diagonal form with truant $\alpha$ for every $\alpha\in\Ccl_{\mathrm{odd}}$, so let us start with this. One easily checks that $\mathrm{odd}\text{-}\!\truant(\qf{})=1$, $\mathrm{odd}\text{-}\!\truant(\qf{1})=3$, $\mathrm{odd}\text{-}\!\truant(\qf{1,2})=5$, $\mathrm{odd}\text{-}\!\truant(\qf{1,1,1})=7$, $\mathrm{odd}\text{-}\!\truant(\qf{1,3,5})=11$, $\mathrm{odd}\text{-}\!\truant(\qf{1,3,4})=15$, $\mathrm{odd}\text{-}\!\truant(\qf{1,3,4,15})=33$. This concludes the proof -- we just proved $\Cdiag_{\mathrm{odd}}\subset\Cdiag_{\mathrm{odd}}$, and the other inclusion holds always.
		
		However, we shall use this opportunity to illustrate the pseudoescalation procedure. The odd-truant of $\qf{}$ is $1$, and clearly, its only pseudoescalation is $\qf{1}$. The odd-truant of $\qf{1}$ is $3$, and we claim that the pseudoescalations are $\qf{1,1,1}$, $\qf{1,2}$ and $\qf{1,3}$. Indeed, for $\qf{1, \gamma_1, \ldots, \gamma_n}$ to represent $3$, at least one $\gamma_i$ must be $1$, $2$ or $3$; and we easily see that these three are the minimal forms with this property.
		
		The pseudoescalator $\qf{1,1,1}$ already represents all of $\Ccl_{\mathrm{odd}}$ except for $7$ and $15$. Its odd-truant is $7$, and this is in fact smaller than any non-represented even integer, hence pseudoescalations are $\qf{1,1,1,n}$ for $1\leq n \leq 7$. (In fact, Proposition \ref{pr:diagonalpractical} can be applied whenever $S$-truant is the same as the \enquote{full} truant.) All of them represent $15$, so they are classical lattices representing all of $\Ccl_{\mathrm{odd}}$, hence they are odd-universal. (One can also observe that they are in fact universal by the 15-Theorem.)
		
		We skip most of the branch with the pseudoescalator $\qf{1,2}$, since nothing particularly interesting happens there. We shall look only at one of its pseudoescalations, namely $\qf{1,2,5}$. Its odd-truant is $15$, and it also represents 1–9 and 11–14. Thus the fourteen diagonal forms $\qf{1,2,5,n}$ for $n \neq 5$, $n \leq 15$, are clearly its pseudoescalations, as $(15-n) \to \qf{1,2,5}$. One easily sees that the only other pseudoescalation is $\qf{1,2,5,5,5}$; again, the rank increased by $2$.
		
		Let us switch to the last branch. The odd-truant of $\qf{1,3}$ is $5$. This time, the only pseudoescalations are $\qf{1,3,n}$ for $n=1,2,4,5$; there is no pseudoescalation containing $\qf{1,3,3}$, since none of the forms $\qf{1,3,3,3,\ldots}$ represents $5$.
		
		Again, we skip most of the computations, and only look at pseudoescalations of $\qf{1,3,4}$. These are slightly trickier to compute, since $\qf{1,3,4}$ fails to represent many numbers smaller than its odd-truant $15$, namely $2$, $6$, $10$ and $14$. Assume now that $\qf{1,3,4,\gamma_1, \ldots, \gamma_k}$ is a pseudoescalator. From the knowledge of numbers represented by $\qf{1,3,4}$, we see that for the whole form to represent $15$, it is necessary and sufficient that $\qf{\gamma_1,\ldots, \gamma_k}$ represents at least one $m \leq 15$ other than $1, 5, 9, 13$. Thus $\qf{1,3,4,n}$ is a pseudoescalation if $n\leq 15$, $n \neq 1,5,9,13$. In fact, $\qf{1,3,4,1}$ is also a pseudoescalation since $4 \to \qf{1}$. Every other pseudoescalation is of the form $\qf{1,3,4,\gamma_1, \ldots, \gamma_k}$ where all $\gamma_i$ are in $\{5,9,13\}$.  Thus one can observe that the only such pseudoescalations are $\qf{1,3,4,5,5}$ and $\qf{1,3,4,5,9}$.
	\end{example}
	
	Beside the difference in the definitions, there is only one other statement from Section \ref{se:finite} that does not generalise straightforwardly for diagonal forms. It is the following theorem of \cite{KKR}, which claims that there is no difference between the criterion sets for forms and for lattices.
	
	\begin{theorem*}[{\cite[Thm.~5.1]{KKR}}]
		Let $\CC \subset S$. Then $\CC$ is an $S$-criterion set if and only if the following equivalence holds for every \textbf{free} quadratic lattice $L$: \enquote{$L$ is $S$-universal if and only if $L$ is $\CC$-universal}. The analogous statement holds for free classical lattices and the $(S,\cl)$-criterion set. 
	\end{theorem*}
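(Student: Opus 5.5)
The implication \enquote{$\Rightarrow$} is immediate, since free lattices are in particular lattices. For \enquote{$\Leftarrow$} I would use Theorem \ref{th:KKRunique}(b): it suffices to show $\CC_S\subset\CC$. So let $\alpha\in\CC_S$ and suppose $\alpha\notin\CC$. By the definition of $S$-criticality there is a lattice $L$ with $\alpha\not\to L$ that is $(S\setminus\{\alpha\})$-universal; in particular $L$ is $\CC$-universal but not $S$-universal. The task is to replace $L$ by a \emph{free} lattice with the same two properties, which contradicts the hypothesis on free lattices.

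The construction is an orthogonal sum $L' = L\perp M$, where $M$ is a unary lattice chosen to kill the Steinitz class of $L$ without creating a representation of $\alpha$. Write $L\cong\mathfrak a_1^{-1}\vv_1+\cdots+\mathfrak a_r^{-1}\vv_r$. By the structure theorem, the module $L$ is isomorphic to $\OK^{\,r-1}\oplus\mathfrak c$, where $\mathfrak c$ is the Steinitz class. Choose an integral ideal $\mathfrak b$ in the ideal class of $\mathfrak c^{-1}$. Then $\mathfrak c\oplus\mathfrak b\cong\OK\oplus\mathfrak c\mathfrak b\cong\OK^{\,2}$, so $L\oplus\mathfrak b$ is free. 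Define $M=(\mathfrak b, Q_M)$ with $Q_M(x)=m\,x^2$, where $m\in\Z^+$ is a large integer. This $M$ is integral and classical, since $\mathfrak b\subset\OK$, and it is totally positive definite. Because $M$ is even diagonal, the argument works verbatim in the classical setting.

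The resulting lattice $L'$ is free and contains $L$, so it still represents every element of $S\setminus\{\alpha\}$, and hence all of $\CC$. The remaining step is to show that $\alpha\not\to L'$ for suitable $m$. Suppose $\alpha=Q(\ww)+m x^2$ with $\ww\in L$ and $x\in\mathfrak b$. If $x=0$, this contradicts $\alpha\not\to L$. If $x\neq0$, then $\delta=mx^2\succ0$ and $Q(\ww)\succeq0$, so $\alpha\succeq\delta$. By the monotonicity of the norm recalled in Section \ref{se:prelims}, this gives
\[
\NN(\alpha)\ge\NN(\delta)=m^{[K:\Q]}\,\NN(x)^2\ge m^{[K:\Q]}.
\]
Choosing $m$ with $m^{[K:\Q]}>\NN(\alpha)$ therefore rules out this case. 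So $L'$ is a free lattice that is $\CC$-universal but not $S$-universal, which gives the desired contradiction.

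I expect the only delicate points to be, first, the module-theoretic fact that adjoining a rank-one summand from the inverse Steinitz class makes the module free, and second, ensuring that the added piece is integral and does not represent $\alpha$. The scaling by a large rational integer $m$ handles the second point cleanly. As noted above, the argument does not extend to diagonal forms, since $L$ itself need not be diagonal.
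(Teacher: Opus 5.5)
Your proof is correct. The paper gives no proof of this statement; it only quotes \cite[Thm.~5.1]{KKR}. Your construction is the natural one. Choosing $\mathfrak b$ in the inverse Steinitz class makes $L\perp(\mathfrak b,mx^2)$ free; one can simply take $\mathfrak b=\mathfrak a_1\cdots\mathfrak a_r$, which is already integral. The estimate $\alpha u^2\succeq mx^2$ with $m^{[K:\Q]}>\NN(\alpha)$ then shows that the new summand cannot contribute to representing any element of $\alpha\U_K^2$. Integrality, classicality and total positive definiteness of $M$ are all fine.

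You do not need the detour through $\CC_S$ and Theorem~\ref{th:KKRunique}(b). Suppose $\CC$ were not an $S$-criterion set. Then some lattice $L$ is $\CC$-universal but misses some $\beta\in S$. Your lattice $L\perp M$ with $m^{[K:\Q]}>\NN(\beta)$ is then directly a free lattice violating the hypothesis. This version is self-contained. Yours instead depends on Theorem~\ref{th:KKRunique}(a),(b), hence on the nontrivial implication of Proposition~\ref{pr:KKRcritical}, and through it on the asymptotic local--global principle of \cite{HKK}. The direct argument also avoids using the existence of a unique minimal criterion set.
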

	
	This fails to carry over to the diagonal setting simply because it is not clear what should be considered a non-free diagonal lattice. One might consider lattices with an orthogonal pseudobasis, i.e.\ lattices that can be written as an orthogonal sum of unary lattices; however, then we cannot prove an analogue of the theorem above. The problem is as follows: A lattice with an orthogonal pseudobasis which happens to be free can of course be interpreted as a quadratic form, but not automatically as a diagonal one.
	
	On the other hand, if lattices that decompose as an orthogonal sum of unary lattices are considered important, one can probably reprove all results of the present section in this setting and obtain uniqueness of the minimal criterion set for $S$-universality of such lattices (using a notion like \enquote{principal diagonal sublattice that has an orthogonal pseudobasis}); but we don't see any reason why this criterion set should be equal to $\Cdiag_S$.

	\section{Universality criterion set for \texorpdfstring{$\qq{2}$}{Q(√2)}} \label{se:sqrt2}
	
	In this section, we illustrate the developed theory by computing the universality criterion set $\Ccl_{\qq{2}}$. The result will be conjectural; what we \emph{know} is that we found all critical elements with norm below $\boundII{}$. We do so by computing (probably) all escalator lattices over this field.
	
	Since we compute $\Ccl_{\qq{2}}$, \emph{throughout this section, all lattices are assumed to be classical}. The computation of $\CC_{\qq{2}}$ would be in principle the same, just computationally much more involved. Note that Lee's result \cite{Le} for $\qq{5}$ also requires the \enquote{classical} assumption.

	\begin{conjecture} \label{co:mainsqrt2}
		For the minimal classical criterion set over $\qq{2}$, we have
		\[
		\Ccl_{\qq{2}} = \bigl\{1, 2+\sqrt2, 3, 3(2+\sqrt2), 3(3\pm\sqrt2)\bigr\}.
		\]
		The smallest rank of a universal escalator is $3$, the largest is $5$; any lattice obtained from $\{\vectorstyle{0}\}$ by five escalations is universal.
	\end{conjecture}
	
	We also directly state how much we are able to prove. For the sake of completeness, we provide the proof immediately, although it essentially consists of a list of references to statements proved throughout this section.
	
	\begin{theorem} \label{th:sqrt2main}
		Let $\CC = \bigl\{1, 2+\sqrt2, 3, 3(2+\sqrt2), 3(3\pm\sqrt2)\bigr\}$. Then:
		\begin{enumerate}
			\item $\CC \subset \Ccl_{\qq{2}}$. \label{it:first}
			\item $\Ccl_{\qq{2}}$ contains no other element with norm below $\boundII{}$.
			\item $\CC$ is the $S$-criterion set for $S = \{\alpha \in \Z[\sqrt2]^{+} \mid \NN(\alpha) \leq \boundII{}\}$. \label{it:third}
			\item Conjecture \ref{co:mainsqrt2} holds if Conjecture \ref{co:13lattices} does, i.e.\ if certain 12 quaternary lattices are indeed universal and one other quaternary lattice represents everything except its truant $3(3-\sqrt2)$. \label{it:last}
		\end{enumerate}
	\end{theorem}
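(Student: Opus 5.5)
The plan is to run the escalation machinery of Section \ref{se:finite} over $K=\qq{2}$ for classical lattices, with an admissible order $\rho$ on $\OO$ (norm first, ties broken by some fixed rule). Note that $\NN(1)=1$, $\NN(2+\sqrt2)=2$, $\NN(3)=9$, $\NN(3(2+\sqrt2))=18$ and $\NN(3(3\pm\sqrt2))=63$.

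For part \ref{it:first}, I will use Proposition \ref{pr:escalators}(a): it suffices to exhibit, for each $\alpha\in\CC$, one classical lattice having $\alpha$ as a truant. The elements $1$ and $2+\sqrt2$ are squarefree indecomposables, hence critical by \cite[Thm.~4.2]{KKR]}; alternatively, they are the truants of $\{\vectorstyle{0}\}$ and $\qf{1}$. For $3$, I expect $\qf{1,2+\sqrt2}$ (or a close relative) to have truant $3$, and this is checked by enumerating the few totally positive elements of norm at most $9$. For $3(2+\sqrt2)$ and $3(3\pm\sqrt2)$, I take suitable escalators $\qf{1,2+\sqrt2,\gamma}$ with $\gamma$ a small element of norm at most $9$, or quaternary escalations of these. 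For each, I verify by finite enumeration (Cauchy--Schwarz bounds the vector coordinates) that all elements of smaller norm are represented and that the given element is not. By closure under conjugation \cite[Cor.~3.5]{KKR}, it is enough to treat one of $3(3\pm\sqrt2)$.

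For parts (2) and \ref{it:third}, I will use Algorithm \ref{al:escalation}, truncated at norm $\boundII{}$. Starting from $\{\vectorstyle{0}\}$, I compute all $S_\rho$-escalations, using Lemma \ref{le:finitedegree} to enumerate them via Cauchy--Schwarz bounds on the new Gram entries, together with the finite list of integral overlattices of the same rank. I reduce up to isometry and, for each escalator, compute its truant among the elements of norm at most $\boundII{}$. The claim is that this tree is finite: every leaf represents all elements up to norm $\boundII{}$, and the only truants occurring are elements of $\CC$. By Proposition \ref{pr:escalators}(d), any critical element of norm at most $\boundII{}$ must appear as the truant of some escalator, which gives (2). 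Part \ref{it:third} then follows from Theorem \ref{th:KKRunique}(b) applied to this $S$, since the $S$-critical elements are exactly the critical elements of norm at most $\boundII{}$ that occur as $S$-truants. Concretely, the same tree computation shows that every escalator representing $\CC$ represents all of $S$.

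For part \ref{it:last}, assume Conjecture \ref{co:13lattices}. The escalation tree then has leaves that are either proven universal (for instance, all escalations of $\qf{1,2+\sqrt2,2}$ by Proposition \ref{pr:6universality}), or among the 12 quaternary lattices assumed universal, or the one lattice assumed to miss only $3(3-\sqrt2)$, whose escalations are then universal. Hence every escalator eventually becomes universal within the computed depth, the set of truants is exactly $\CC$, and Proposition \ref{pr:escalators} gives $\Ccl_{\qq{2}}=\CC$. The rank statements are read off from the tree.

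The main obstacle is the computation itself: keeping the escalation tree manageable. This includes isometry testing, handling same-rank escalations (Example \ref{ex:rankdidntgrow}), and representation checks up to norm $\boundII{}$. Beyond that, proving universality of the leaves, such as Proposition \ref{pr:6universality}, requires genuine arguments via class number one sublattices and local--global principles.
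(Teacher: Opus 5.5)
Your proposal follows the paper's own route: explicit lattices with the prescribed truants for (1) (the paper uses $\{\vectorstyle{0}\}$, $\qf{1}$, $\qf{1,2+\sqrt2}$, $\qf{1,2+\sqrt2,3}$ and $\qf{1,2+\sqrt2}\perp\bigl\langle\begin{smallmatrix}3&1\\1&3(2+\sqrt2)\end{smallmatrix}\bigr\rangle$ together with its conjugate), the escalation tree truncated at norm $\boundII{}$ for (2)--(3), and universality of the leaves for (4). The one point your sketch of (4) leaves implicit is how the escalations of $\qf{1,2+\sqrt2,3}$ not named in Conjecture \ref{co:13lattices} are handled. This is not done by local--global arguments, which are unavailable there since that ternary lattice has class number $2$; instead Lemma \ref{le:conjecturesuffices} uses universal ternary sublattices $E_3^{(j)}$, $j\le 4$, conjugation (which also covers $E_{4,(5)}^{(3)}$), and the fact that $E_{4,(5)}^{(25)}$ contains a copy of $E_{4,(5)}^{(19)}$.
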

	
	\begin{proof}
		Parts \ref*{it:first}--\ref*{it:third} follow from Propositions \ref{pr:ternaryesc}, \ref{pr:6summary} and \ref{pr:5summary}. Since their proof requires a considerable effort including the use of a computer program, we provide here an alternative proof of \ref*{it:first} that can be easily checked by hand. It is just a condensed version of certain parts of proofs of the listed propositions.
		
		Take $\alpha \in \CC$. By Proposition \ref{pr:KKRcritical}, it is enough to find a (classical) lattice with truant $\alpha$. Put $E_0\simeq\{\vectorstyle{0}\}$, $E_1\simeq\qf{1}$, $E_2\simeq\qf{1,2+\sqrt2}$, $E_3^{(5)}\simeq\qf{1,2+\sqrt2,3}$, $E_{4,(5)}^{(2)}\simeq\qf{1,2+\sqrt2} \perp\bigl\langle\begin{smallmatrix}
			3 & 1 \\
			1 & 3(2+\sqrt2)  \\
		\end{smallmatrix}\bigr\rangle$ and $E_{4,(5)}^{(3)}\simeq\qf{1,2-\sqrt2} \perp\bigl\langle\begin{smallmatrix}
			3 & 1 \\
			1 & 3(2-\sqrt2)  \\
		\end{smallmatrix}\bigr\rangle$. Then these lattices have truants $1$, $2+\sqrt2$, $3$, $3(2+\sqrt2)$, $3(3-\sqrt2)$ and $3(3+\sqrt2)$ in this order.
		
		Finally, the proof of part \ref*{it:last} requires almost all statements from this section. We need to show that, under the assumption, all escalators which do not have truant in $\CC$ are universal. For ternary lattices, this is contained in Proposition \ref{pr:ternaryesc}; for escalations of $\qf{1,2+\sqrt2,2}$, we proved it unconditionally in Proposition \ref{pr:6universality}. For escalations of $\qf{1,2+\sqrt2,3}$, we performed some reduction; the necessary implication is contained in Lemma \ref{le:conjecturesuffices}.
	\end{proof}
	
	Let us repeat some basic facts about $\qq{2}$. Its class number is $1$, which means that all lattices are free and we could reformulate everything in terms of quadratic forms, if we wished. But since escalation is based on work with vectors, it is more convenient to stay in the world of lattices. We use $\overline{\alpha}$ for the conjugate of $\alpha$. The ring of integers is $\Z[\sqrt2]$; the fundamental unit $\ve = 1+\sqrt2$ is not totally positive. Up to multiplication by $\varepsilon^{2k}$, the only indecomposable elements are $1$ and $2+\sqrt2$; observe that according to Theorem \ref{th:sqrt2main} they both lie in $\Ccl_{\qq{2}}$, just as \cite[Thm.~4.2]{KKR} predicts. In order to shorten some long lists, we shall occasionally write $\pi_2=2+\sqrt2$.
	
	Remember that we work with elements modulo squares of units, so we do not distinguish between $2+\sqrt2$ and $2-\sqrt2 = (2+\sqrt2)\ve^{-2}$. Usually, when performing escalation, one needs to choose an admissible order $\rho$, see Definition \ref{de:admissible}. However, it turns out that over $\qq{2}$, every lattice has a unique truant independent of the ordering.
	
	We start with the escalator $E_0 \simeq \{\vectorstyle{0}\}$ of rank zero, with $\truant(E_0)=1$. Its only escalation is $E_1 \simeq \qf{1}$ of rank one, with $\truant(E_1)=2+\sqrt2$. 
	
	Next, we need to find all lattices of the form $E_1 + \OK \vv$ where $\vv$ represents $2+\sqrt2$. The Gram matrix $\G_2$ of $(\ee_1,\vv)$, where $\ee_1$ is the basis vector of $E_1$, is
	\[
	\G_2 = 
	\begin{pmatrix}
		1 & a \\
		a & 2+\sqrt2 \\
	\end{pmatrix} \text{ for some $a \in \Z[\sqrt2]$};
	\]
	this matrix must be totally positive semidefinite, which is equivalent to $a^2\preceq 2+\sqrt2$. Since $2+\sqrt2$ is indecomposable and not a square, the only choice is $a=0$. Thus there is only one escalator of rank 2, namely $E_2 \simeq \langle 1,2+\sqrt2 \rangle$. One easily sees that $2$, as well as both elements of norm $7$, namely $3\pm\sqrt2$, are represented by $E_2$, but $3$ is not. Hence $\truant(E_2)=3$.
	
	The next escalation step concerns lattices of the form $E_2 + \OK\vv$ with $Q(\vv)=3$. We denote by $\ee_1, \ee_2$ the basis vectors of $E_2$ representing $1$ and $2+\sqrt2$. The Gram matrix $\G_3$ of $(\ee_1,\ee_2,\vv)$ is
	\[
	\G_3 = 
	\begin{pmatrix}
		1 & 0 & a  \\
		0 & 2+\sqrt2 & b \\
		a & b & 3
	\end{pmatrix} \text{ for some $a,b \in \Z[\sqrt2]$};
	\]
	this can be simplified by instead looking at the Gram matrix $\G'_3$ of $(\ee_1,\ee_2,\vv-a\ee_1)$ which of course generates the same quadratic lattice:
	\[
	\G'_3 =
	\begin{pmatrix}
		1 & 0 & 0  \\
		0 & 2+\sqrt2 & b \\
		0 & b & 3-a^2\\
	\end{pmatrix}
	\]
	So we see that every escalation of $E_2$ is of the form $\qf{1} \perp L$ where $L$ is given by the Gram matrix $\bigl(\begin{smallmatrix}
		2+\sqrt2 & b \\
		b & 3-a^2  \\
	\end{smallmatrix}\bigr)$, where the inequalities $a^2 \preceq 3$ and $b^2\preceq (3-a^2)(2+\sqrt2)$ must be satisfied.
	
	Notice that $a^2 \in \{0,1,2\}$, and from the shape of $\G'_3$ it is easy to see that two matrices with the same values of $a^2$ and $b$ are equal. Moreover, matrices which differ only by the sign of $b$ correspond to isometric lattices. Bearing this in mind, the inequalities have seven relevant solutions, namely
	\[
	(a,b)=(\sqrt{2},0),(1,\ve),(0,1),(0,\ve),(0,0),(1,0),(1,1).
	\]
	This yields the following lattices:
	\begin{alignat*}{3}
		&E_3^{(1)}\simeq\langle 1,1,2+\sqrt2\rangle,\quad 
		&&E_3^{(2)}\simeq\langle 1 \rangle\perp\Bigl\langle\begin{matrix}
			2+\sqrt2 & 1 \\
			1 & 2-\sqrt2  \\
		\end{matrix}\Bigr\rangle,\quad
		&&E_3^{(3)}\simeq\langle 1 \rangle\perp\Bigl\langle\begin{matrix}
			2+\sqrt2 & 1 \\
			1 & 3  \\
		\end{matrix}\Bigr\rangle,
		\\
		&E_3^{(4)}\simeq\langle 1 \rangle\perp\Bigl\langle\begin{matrix}
			2-\sqrt2 & 1 \\
			1 & 3  \\
		\end{matrix}\Bigr\rangle,\quad
		&&E_3^{(5)}\simeq\langle 1,2+\sqrt2,3\rangle,\quad
		&&E_3^{(6)}\simeq\langle 1,2+\sqrt2,2\rangle,
	\end{alignat*}
	which correspond (in the second and fourth case after a slight change of basis) to the first six pairs. 
	We did not list the lattice $\qf{1}  \perp\bigl\langle\begin{smallmatrix}
		2+\sqrt2 & 1 \\
		1 & 2  \\
	\end{smallmatrix}\bigr\rangle$ corresponding to the pair $(1,1)$, since performing a suitable basis change shows that it is in fact isometric to $E_3^{(2)}$. For the first time, some of the obtained escalators are universal:
	
	\begin{proposition} \label{pr:ternaryesc}
		Up to isometry, there are six ternary escalators over $\qq{2}$: Four universal ternary escalators, namely $E_3^{(i)}$ for $i \leq 4$, and two ternary escalators with truant $3(2+\sqrt2)$, namely $E_3^{(5)}\simeq\langle 1,2+\sqrt2,3\rangle$ and $E_3^{(6)}\simeq\langle 1,2+\sqrt2,2\rangle$. 
	\end{proposition}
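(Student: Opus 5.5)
The proof splits into three tasks. First, confirm that the enumeration preceding the statement is complete and has no hidden isometries. Second, prove universality of $E_3^{(1)},\dots,E_3^{(4)}$. Third, determine the truant of $E_3^{(5)}$ and $E_3^{(6)}$.

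\emph{Completeness of the enumeration.} Every ternary escalator is an escalation of $E_2$, the only binary escalator. Rank $3$ cannot also be reached as a same-rank escalation of a ternary lattice, since that would require a ternary escalator at an earlier stage. So ternary escalators are exactly escalations of $E_2$ by a vector $\vv\notin KE_2$ with $Q(\vv)=3$, giving the Gram matrices $\G'_3$.

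We then solve $a^2\preceq 3$, which gives $a\in\{0,\pm1,\pm\sqrt2\}$ up to sign. For each such $a$, we solve $b^2\preceq(3-a^2)(2+\sqrt2)$; this is a finite check via the trace bound. Next we reduce $b$ modulo the action of $\ve^{2k}$-type automorphisms of $\qf{2+\sqrt2}$ and modulo sign, which yields the seven pairs listed.

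To see that the six resulting lattices are pairwise non-isometric, we compare determinants and small represented elements. For instance, $E_3^{(1)}$ has two orthogonal vectors of norm $1$, while the others do not. We also compare discriminants up to unit squares; the isometry for the pair $(1,1)$ is given by the explicit basis change.

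\emph{Truants of $E_3^{(5)}$ and $E_3^{(6)}$.} We list all totally positive elements up to $\U_K^2$ with norm below $\NN(3(2+\sqrt2))=18$. These are
\[
1,\ 2,\ 2+\sqrt2,\ 3,\ 3\pm\sqrt2,\ 4\pm\sqrt2,\ 4,\ 2(2+\sqrt2),\ 5\pm\sqrt2,\ 5\pm 2\sqrt2,\ \ldots
\]
For each of them we exhibit a representation by the given lattice. Then we show that $6+3\sqrt2$ is not represented, by bounding the variables through the embedding inequalities: from $x^2+(2+\sqrt2)y^2+3z^2=6+3\sqrt2$ in both embeddings, each summand is $\preceq 6+3\sqrt2$, which leaves finitely many candidates to check. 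The same argument handles $\qf{1,2+\sqrt2,2}$.

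\emph{Universality of $E_3^{(1)},\dots,E_3^{(4)}$.} This is the main obstacle. The plan is to compute (in Magma) that each of these lattices has class number $1$, so that each satisfies the local--global principle. It then suffices to check local universality at every prime. At odd primes this is automatic, since the lattices are unimodular there (determinants are $2+\sqrt2$ up to units). The remaining work is the dyadic prime $(\sqrt2)$, which we handle with standard local theory or by checking representation of all square classes of $K_{(\sqrt2)}$.

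If some lattice has class number greater than $1$, we fall back on one of two routes. The first is to find a class-number-$1$ sublattice or an overlattice argument. The second is to use the classical result that $\qf{1,1,1}$-type sums over $\qq{2}$ represent everything locally possible; here $E_3^{(1)}=\qf{1,1,2+\sqrt2}$ is a known universal form (Chan--Kim--Raghavan), and the other three are likely in their list of universal ternary classical forms over $\qq{2}$. Citing that classification would finish the proof directly.
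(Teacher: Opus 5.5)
\textbf{The gap is in the completeness step.} You claim that rank $3$ cannot be reached by a same-rank escalation of a ternary lattice \enquote{since that would require a ternary escalator at an earlier stage}. But such earlier ternary escalators exist, namely $E_3^{(5)}$ and $E_3^{(6)}$, and escalation need not raise the rank (Example~\ref{ex:notincrease}). It actually happens here. Since $2=(\sqrt2)^2$, the vector $\ee_3/\sqrt2$ has norm $1$, so $E_3^{(6)}+\OK\,\ee_3/\sqrt2\simeq\qf{1,1,2+\sqrt2}$ is a ternary escalation of $E_3^{(6)}$ (Example~\ref{ex:rankdidntgrow}).

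So the count \enquote{six} needs an extra argument: every same-rank escalation of the two non-universal ternary escalators must already be on the list. Determinant ideals give this quickly:
\begin{itemize}
\item $\det E_3^{(5)}=3(2+\sqrt2)$ generates the squarefree ideal $(3)(\sqrt2)$, so $E_3^{(5)}$ has no proper classical overlattice at all.
\item $\det E_3^{(6)}$ generates $(\sqrt2)^3$, so a proper classical overlattice has index $(\sqrt2)$. Requiring $\frac{1}{\sqrt2}(x\ee_1+y\ee_2+z\ee_3)$ to have integral norm and inner products forces $\sqrt2\mid x,y$. Hence the only such overlattice is $E_3^{(6)}+\OK\,\ee_3/\sqrt2\simeq E_3^{(1)}$, which is universal and ends the chain.
\end{itemize}
Your restriction to $\vv\notin KE_2$ needs the same kind of remark: $E_2$ has no binary escalation because its determinant $2+\sqrt2$ generates a prime ideal. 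The paper addresses exactly this point in the remark following the proposition, via Example~\ref{ex:rankdidntgrow} and the observation that $E_3^{(5)}$ has no classical ternary overlattice.

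\textbf{Smaller corrections.}
\begin{itemize}
\item The determinants of $E_3^{(1)},\dots,E_3^{(4)}$ are $2+\sqrt2$, $1$, $5+3\sqrt2$ and $5-3\sqrt2$. They are not all $2+\sqrt2$ up to units; if they were, your determinant comparison could not separate the lattices.
\item Consequently $E_3^{(3)}$ and $E_3^{(4)}$ are not unimodular at a prime above $7$, and local universality there is not automatic. By conjugation it suffices to treat $E_3^{(3)}$: its unimodular component $\qf{1,2+\sqrt2}$ is hyperbolic at the prime containing $5+3\sqrt2$, because $-(2+\sqrt2)$ is a square modulo it.
\item $5\pm\sqrt2$ has norm $23$, so it does not belong in your list. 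The totally positive classes of norm below $18$ are exactly $1$, $2+\sqrt2$, $2$, $3\pm\sqrt2$, $2(2+\sqrt2)$, $3$, $4\pm\sqrt2$, $4$, $5\pm2\sqrt2$. Moreover, $3(2+\sqrt2)$ is the only class of norm $18$.
\item The class-number route is unnecessary: all four lattices are among the universal ternary forms of \cite{CKR}, which is exactly how the paper concludes.
\end{itemize}
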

	\begin{proof}
		We have already computed that these lattices are the only escalations of $E_2$ up to isometry. It can easily be checked that there are no further isometries between them. Computing the truants of the two non-universal lattices is easy. The first four lattices correspond to the quadratic forms $x^2+y^2+(2+\sqrt2)z^2$, $x^2 + (2+\sqrt2)y^2 + 2yz + (2-\sqrt2)z^2$ and $x^2 + (2\pm \sqrt2)y^2 + 2yz + 3z^2$, which are universal by \cite{CKR}.
	\end{proof}
	
	Strictly speaking, at this stage we cannot know that there are no other ternary escalators; it should be borne in mind that the escalation does not necessarily increase the rank (although it always happened so far), as showed in Example \ref{ex:notincrease}. In fact, we shall see in Example \ref{ex:rankdidntgrow} that escalation of $E_3^{(6)}$ can yield a ternary lattice. However, it will be one of the already known lattices, so the above proposition is indeed correct.
	
	For the non-universal lattices $E_3^{(5)}$ and $E_3^{(6)}$, we must perform another escalation. Let us note that the class number of $E_3^{(6)}$ is $1$ and so we fully understand its set of represented elements, see Lemma \ref{le:diagsublat}\ref*{it:diagsublat1}. The lattice $E_3^{(5)}$ has class number $2$, hence the local--global principle can fail, and we shall see in Remark \ref{re:l-gfails} that it does. This is the main reason why we are in the end not able to prove that we found the whole $\Ccl_{\qq{2}}$. Lee \cite{Le} succeeded in proving the analogous statement for $\qq{5}$, namely the Norm-45-Theorem in the Introduction, because all his ternary escalators had class number $1$.
	
	We shall first compute the escalations of the better-behaved lattice $E_3^{(6)}\simeq\langle 1,2+\sqrt2,2\rangle$. As before, we look at all possible Gram matrices $\G_{4,(6)}$ of $(\ee_1, \ee_2, \ee_3, \vv)$ where $\vv$ represents the previous truant $3(2+\sqrt2)$ and the $\ee_i$'s form the basis of $E_3^{(6)}$. We have
	\begin{equation} \label{eq:G46}
		\G_{4,(6)} = \begin{pmatrix}
			1 & 0 & 0 & a\\
			0 & 2+\sqrt2 & 0 & b \\
			0 & 0 & 2 & c\\
			a & b & c & 3(2+\sqrt2)\\
		\end{pmatrix},
	\end{equation}
	which directly yields the conditions $a^2 \preceq 3(2+\sqrt2)$, $b^2 \preceq 3(2+\sqrt2)^2$ and $c^2 \preceq 6(2+\sqrt2)$. These are necessary but not sufficient for $\G_{4,(6)}$ to be totally positive semidefinite; they give finitely many solutions which then must be tested for semidefiniteness individually. The diagonal shape of the upper left $3 \times 3$ submatrix allows us to freely switch signs of the numbers $a$, $b$, $c$ without changing the isometry class of the corresponding lattice. Bearing this in mind and performing the necessary computations, we arrive at $38$ triples $(a,b,c)$ that yield positive definite matrices, namely\footnote{Here and in all later lists of this kind (namely the list \eqref{eq:list5} and the list in the proof of Proposition \ref{pr:6universality}), every row (except for the last one) contains exactly 10 entries, simplifying orientation while keeping compactness. We also write $\pi_2=2+\sqrt2$ as it takes up less space.}
	\newcommand{\sep}{,\,}
	\begin{alignat}{10}\label{eq:list6}
		&(0,0,0)\sep &&(0,0,1)\sep &&(0,0,\ve)\sep &&(0,0,\!\sqrt2)\sep &&(0,0,\pi_2)\sep &&(0,1,0)\sep &&(0,\ve,0)\sep &&(0,\ve,1)\sep &&(0,\ve,\ve)\sep &&(0,\ve,\!\sqrt2)\sep \nonumber\\ &(0,\ve,\pi_2)\sep &&(0,\ve^2,0)\sep &&(0,\pi_2,0)\sep &&(0,\pi_2,1)\sep &&(0,\pi_2,\ve)\sep &&(0,\pi_2,\!\sqrt2)\sep &&(0,\pi_2,\pi_2)\sep &&(0,2\ve,0)\sep &&(0,2\ve,1)\sep &&(0,2\ve,\ve)\sep \nonumber\\ &(1,0,0)\sep &&(1,0,1)\sep &&(1,0,\ve)\sep &&(1,0,\pi_2)\sep &&(1,\ve,0)\sep &&(1,\ve,\ve)\sep &&(1,\ve,\pi_2)\sep &&(1,\pi_2,0)\sep &&(1,\pi_2,\ve)\sep &&(\ve,0,0)\sep \nonumber\\ &(\ve,0,1)\sep &&(\ve,0,\ve)\sep &&(\ve,0,\!\sqrt2)\sep &&(\ve,\ve,0)\sep &&(\ve,\ve,1)\sep &&(\ve,\ve,\!\sqrt2)\sep &&(\ve,\pi_2,0)\sep &&(\ve,\pi_2,1).
	\end{alignat}
	and two more triples that yield positive semidefinite but not definite matrices, namely
	\[
	(1,\pi_2,\pi_2), \quad (\ve,\pi_2,\!\sqrt2).
	\]
	
	We shall denote the lattices corresponding to matrices from the first list by $E_{4,(6)}^{(i)}$ for $1 \leq i \leq 38$. While in all previous steps, all obtained matrices were totally positive definite, now we also got two matrices with determinant zero -- they are semidefinite, not definite. This means that they correspond to lattices of rank less than $4$. We show that they are in fact just copies of $E_3^{(1)} = \qf{1,1,2+\sqrt2}$. Although this might seem counter-intuitive, one should have seen it coming: Since $2 = \sqrt2^2$, $E_3^{(6)}$ is a sublattice of $E_3^{(1)}$, which is universal and thus of course represents $3(2+\sqrt2)$.
	
	\begin{example} \label{ex:rankdidntgrow}
		Among the escalations of the ternary lattice $E_3^{(6)}$, there is a universal ternary lattice isometric to $\qf{1,1,2+\sqrt2} \simeq E_3^{(1)}$.
	\end{example}
	\begin{proof}
		We already computed that among the escalations, there is a lattice of the form $\OK\ee_1 + \OK\ee_2 + \OK\ee_3 + \OK\vv$ where the Gram matrix of $(\ee_1,\ee_2,\ee_3,\vv)$ is as in \eqref{eq:G46}
		for $(a,b,c) = (1+\sqrt2,2+\sqrt2,\sqrt2)$. (Or $(1,2+\sqrt2,2+\sqrt2)$, which after a similar computation leads to another proof of our statement.) 
		
		The vectors $(\ee_1,\ee_2,\ee_3,\vv)$ generate the same lattice as $(\ee_1,\vv-\ve\ee_1-\ee_2,\ee_2,\ee_3)$; the corresponding Gram matrix is now
		\[
		\Biggl(\begin{smallmatrix}
			1 & 0 & 0 & 0\\
			0 & 1 & 0 & \sqrt2 \\
			0 & 0 & 2+\sqrt2 & 0\\
			0 & \sqrt2 & 0 & 2\\
		\end{smallmatrix}\Biggr).
		\]
		Changing the set of generators still further to $\bigl(\ee_1,\vv-\ve\ee_1-\ee_2,\ee_2,\ee_3 - \sqrt2(\vv-\ve\ee_1-\ee_2)\bigr)$, which now means only a change of the last vector, gives the Gram matrix
		\[
		\Biggl(\begin{smallmatrix}
			1 & 0 & 0 & 0\\
			0 & 1 & 0 & 0 \\
			0 & 0 & 2+\sqrt2 & 0\\
			0 & 0 & 0 & 0\\
		\end{smallmatrix}\Biggr).
		\]
		Since we assume all lattices to be totally positive definite, the equality $Q\bigl(\ee_3 - \sqrt2(\vv-\ve\ee_1-\ee_2)\bigr)=0$ means that this is in fact the zero vector. So the vectors $(\ee_1,\ee_2,\ee_3,\vv)$ generate the same lattice as $(\ee_1,\vv-\ve\ee_1-\ee_2,\ee_2)$, and the Gram matrix above shows that it is isometric to $\qf{1,1,2+\sqrt2}$.
	\end{proof}
	
	Thus, up to isometry there is one ternary escalation of $E_3^{(6)}$ and it is the already known escalator $E_3^{(1)}$. We are able to prove that all the remaining, quaternary escalations of $E_3^{(6)}$ are also universal, but we postpone it until later (in order to first present the whole escalation process). We will now present the concluding result for all escalations of $E_3^{(6)}$.
	
	\begin{proposition} \label{pr:6summary}
		Up to isometry, there are 1 ternary and 30 quaternary escalations of $E_3^{(6)}$; they are all universal. The ternary escalation is isometric to $\qf{1,1,2+\sqrt2}$, which already occurred as an escalation of $E_2$.
		The quaternary escalations are
		\[
		\qf{1} \perp \Bigl\langle
		\begin{smallmatrix}
			2+\sqrt2 & 0 & b \\
			0 & 2 & c\\
			b & c & 3(2+\sqrt2)-a^2
		\end{smallmatrix}\Bigr\rangle,
		\]
		where the triple $(a,b,c)$ is taken from the list \eqref{eq:list6} but the $i$th entry for $i \in \{12, 17, 20, 32, 33, 36, 37, 38\}$ is omitted.
	\end{proposition}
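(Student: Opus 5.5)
The plan has three parts. First, make the list of escalations complete. Then remove the duplicates up to isometry. Finally, prove universality of what remains.

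\emph{Completeness.} Every escalation of $E_3^{(6)}$ is generated by $\ee_1,\ee_2,\ee_3,\vv$ with Gram matrix \eqref{eq:G46}. The triple $(a,b,c)$ satisfies the necessary bounds $a^2\preceq 3(2+\sqrt2)$, $b^2\preceq 3(2+\sqrt2)^2$, $c^2\preceq 6(2+\sqrt2)$. Signs can be normalised, and $a$, $b$, $c$ may also be multiplied by squares of units where the corresponding diagonal entry allows. Testing total semidefiniteness of each candidate gives the $38$ definite triples \eqref{eq:list6} and the two degenerate ones. By Example \ref{ex:rankdidntgrow}, both degenerate triples give the ternary lattice $\qf{1,1,2+\sqrt2}\simeq E_3^{(1)}$, which is universal by Proposition \ref{pr:ternaryesc}. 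Replacing $\vv$ by $\vv-a\ee_1$ splits off $\qf{1}$. The quaternary escalations are therefore $\qf{1}\perp M_{(a,b,c)}$, with $M_{(a,b,c)}$ the displayed ternary Gram matrix; this depends only on $(a^2,b,c)$.

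\emph{Isometry classes.} I would sort the $38$ lattices by cheap invariants: the determinant modulo squares of units, the numbers of vectors of small norm, and $|\mathrm{Aut}|$. Within each class of equal invariants I would exhibit explicit basis changes; in practice this means computing an isometry test, e.g.\ in Magma. The claim to verify is that the entries with indices $12,17,20,32,33,36,37,38$ are each isometric to an earlier entry, and that the remaining $30$ are pairwise non-isometric. For the non-isometry, invariants should suffice; where they do not, the isometry test decides.

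\emph{Universality.} This is the main obstacle, and it is the content deferred to Proposition \ref{pr:6universality}; I would prove it uniformly rather than lattice by lattice. Each escalation $L$ contains $E_3^{(6)}=\qf{1,2+\sqrt2,2}$, which has class number $1$, so by Lemma \ref{le:diagsublat} its represented set is described exactly by local conditions. The non-represented elements then lie in explicit square classes and congruence classes at the dyadic prime $\sqrt2$, for instance elements like $3(2+\sqrt2)\cdot(\text{stuff})$. For a given $\alpha$ I would try to write $\alpha=Q(\ww)+\beta$. Here $\ww$ ranges over a few short vectors of $L$ involving $\vv$; the vectors $\vv$ and $\vv\pm\ee_i$ themselves have norm close to $3(2+\sqrt2)$. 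The aim is to choose $\ww$ so that $\beta=Q(\uu)$ for some $\uu\in E_3^{(6)}$ orthogonal to the relevant part. More precisely, I would restrict to an orthogonal-enough sublattice and check that $\alpha-Q(\ww)$ lands in a locally represented class for at least one of a small set of choices of $\ww$, using the fact that the local obstructions are only at $\sqrt2$ (and possibly at $3$ via determinants).

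The delicate part is doing this uniformly for all $30$ lattices, because the Gram entries differ. Where a lattice contains a universal ternary sublattice, I would shortcut by exhibiting it: $E_3^{(1)}$ through $E_3^{(4)}$, or another of the universal forms of \cite{CKR}. I expect this sublattice approach to handle most cases. The remaining ones would use a class-number-$1$ quaternary or ternary sublattice, checked by mass computation, together with the local-representation argument above.
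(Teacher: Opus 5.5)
Your completeness and isometry steps agree with the paper: enumerate the triples, dispose of the two degenerate ones by Example \ref{ex:rankdidntgrow}, split off $\qf{1}$, and decide isometries by computer. One correction there: only sign changes are available as a normalisation. $E_3^{(6)}$ has just the $8$ sign automorphisms, and replacing $\ee_j$ by $u\ee_j$ with a unit $u\neq\pm1$ changes $Q(\ee_j)$.

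The genuine gap is the universality step, which is the real content (Proposition \ref{pr:6universality}). Exploiting $E_3^{(6)}$ and Lemma \ref{le:diagsublat}\ref*{it:diagsublat1} is the right instinct. But $\alpha=Q(\ww)+Q(\mathbf{u})$ with $\mathbf{u}\in E_3^{(6)}$ gives $\alpha\to L$ only if $B_Q(\ww,\mathbf{u})=0$. Since you cannot choose which $\mathbf{u}$ represents $\alpha-Q(\ww)$, you need $\ww\perp E_3^{(6)}$. Your short vectors $\vv$, $\vv\pm\ee_i$ are in general not orthogonal to $E_3^{(6)}$ (e.g.\ $B_Q(\vv,\ee_3)=c$). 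You would then be representing by a coset $\ww+E_3^{(6)}$, about which the local description of $E_3^{(6)}$ says nothing. The paper instead takes a generator of the orthogonal complement of $E_3^{(6)}$ in $E_{4,(6)}^{(i)}$, giving a sublattice $E_3^{(6)}\perp\qf{\gamma_i}$. These $\gamma_i$ are not in general of size $3(2+\sqrt2)$; for example $\gamma_2=2+8(2-\sqrt2)$ has norm $196$. One must also check $\gamma_i\not\equiv0,\pm2\sqrt2\pmod 8$. That condition is exactly what pushes $\alpha-\gamma_i$ and $\alpha-\ve^{-2}\gamma_i$ out of the only bad classes $6\pm3\sqrt2\pmod 8$. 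There is no obstruction at $3$, since $E_3^{(6)}$ is unimodular away from $\sqrt2$.

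Second, local representability includes the real places, i.e.\ $\alpha-\gamma_i\omega^2\succeq0$. Your plan has no mechanism controlling this uniformly in $\alpha$, and no finite verification step. The paper normalises $\alpha$ modulo $\ve^2$ to $k\beta_0+l\beta_{\pm1}$ with $k,l\geq0$. For $\gamma_i=m\beta_0+n\beta_1$ (conjugating if necessary), it shows that $\alpha-\gamma_i$ or $\alpha-\ve^{-2}\gamma_i$ is totally nonnegative unless $k<m+4n$ and $l<2m+n$. The leftover $\alpha$ have $\NN(\alpha)<17m^2+52mn+34n^2\leq 3076$ and are checked by computer directly in $E_{4,(6)}^{(i)}$. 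This check is unavoidable, because the diagonal sublattices really do miss elements such as $6+3\sqrt2$ and $14\pm5\sqrt2$.

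Nor do your shortcuts cover \enquote{most} cases. Only eight of the thirty lattices contain one of $E_3^{(1)},\dots,E_3^{(4)}$. A universal ternary sublattice would contain a universal ternary escalator, i.e.\ one of these, so other forms from \cite{CKR} add nothing. The class-number-one route handles only one more lattice ($i=27$). The remaining twenty-one lattices therefore rest entirely on the part of your argument that is not worked out.
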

	\begin{proof}
		We already computed that all escalations of $E_3^{(6)}$ are either in the list \eqref{eq:list6} or correspond to one of the triples $(1,2+\sqrt2,2+\sqrt2)$ and $(1+\sqrt2,2+\sqrt2,\sqrt2)$. In the latter case, we checked in Example \ref{ex:rankdidntgrow} that both triples correspond to $\qf{1,1,2+\sqrt2}$, which is universal by \cite{CKR} (and already occurred as $E_3^{(1)}$ in Proposition \ref{pr:ternaryesc}).
		
		Let us denote the quaternary lattice corresponding to the $i$th triple, $1 \leq i \leq 38$, as $E_{4,(6)}^{(i)}$. By exchanging the last basis vector $\vv$ for $\vv-a\ee_1$, it can be expressed in the form $\qf{1} \perp N$ where $N$ is a ternary lattice, as given in the statement of this proposition. One can easily check, usually just by multiplying some of the basis vectors by a unit, that there are the following isometries:
		\[
		E_{4,(6)}^{(i)} \simeq E_{4,(6)}^{(j)} \text{ for $(i,j) \in M$,}
		\]
		where $M = \{(6,12), (16,17), (19,20), (22,32), (24,33), (27,36), (28,37), (29,38)\}$. On the other hand, one can check (on a computer or with a lot of patience) that there are no other isometries, so there are indeed $30 = 38 - 8$ distinct quaternary escalations. They are all universal by Proposition \ref{pr:6universality}.
	\end{proof}
	
	We had postponed the proof of universality of the quaternary escalations of $E_{3}^{(6)}$, i.e.\ of Proposition \ref{pr:6universality}, for later; with this exception, we have handled the branch of the escalation tree starting from $E_{3}^{(6)}$. Now we turn our attention to the other non-universal ternary escalator $E_{3}^{(5)} \simeq \qf{1,2+\sqrt2,3}$. Since its truant is also $3(2+\sqrt2)$, its escalations correspond to matrices
	\begin{equation*} 
		\G_{4,(5)} = \begin{pmatrix}
			1 & 0 & 0 & a\\
			0 & 2+\sqrt2 & 0 & b \\
			0 & 0 & 3 & c\\
			a & b & c & 3(2+\sqrt2)\\
		\end{pmatrix}.
	\end{equation*}
	Similarly as before, we compute all triples $(a,b,c)$ up to sign changes for which this matrix is totally positive semidefinite. In this case it will in fact always be totally positive definite, which is due to the fact that there exists no classical ternary overlattice of $E_{3}^{(5)}$. There are $53$ such triples:
	\newcommand{\sepp}{,}
	\begin{alignat}{10}\label{eq:list5}
		&(0,0,0)\sepp &&(0,0,1)\sepp &&(0,0,\ve)\sepp &&(0,0,\!\sqrt2)\sepp &&(0,0,\pi_2)\sepp &&(0,0,2)\sepp &&(0,0,2\ve)\sepp &&(0,\!0,\!1{+}\pi_2)\sepp &&(0,1,0)\sepp &&(0,\ve,0), \nonumber\\
		&(0,\ve,1)\sepp &&(0,\ve,\ve)\sepp &&(0,\ve,\!\sqrt2)\sepp &&(0,\ve,\pi_2)\sepp &&(0,\ve,2)\sepp &&(0,\ve,2\ve)\sepp &&(0,\!\ve,\!1{+}\pi_2)\sepp &&(0,\ve^2,0)\sepp &&(0,\!\pi_2,0)\sepp &&(0,\!\pi_2,1), \nonumber\\
		&(0,\pi_2,\ve)\sepp &&(0,\!\pi_2,\!\!\sqrt2)\sepp &&(0,\pi_2,\pi_2)\sepp &&(0,\!\pi_2,\!1{+}\pi_2)\sepp &&(0,2\ve,0)\sepp &&(0,2\ve,1)\sepp &&(0,2\ve,\ve)\sepp &&(1,0,0)\sepp &&(1,0,1)\sepp &&(1,0,\ve), \nonumber\\
		&(1,\!0,\!\sqrt2)\sepp &&(1,0,\pi_2)\sepp &&(1,0,2\ve)\sepp &&(1,\ve,0)\sepp &&(1,\ve,1)\sepp &&(1,\ve,\ve)\sepp &&(1,\ve,\pi_2)\sepp &&(1,\pi_2,0)\sepp &&(1,\!\pi_2,\ve)\sepp &&(1,\!\pi_2,\!\pi_2), \nonumber\\
		&(\ve,0,0)\sepp &&(\ve,0,1)\sepp &&(\ve,0,\ve)\sepp &&(\ve,0,\!\sqrt2)\sepp &&(\ve,0,\pi_2)\sepp &&(\ve,0,2)\sepp &&(\ve,\ve,0)\sepp &&(\ve,\ve,1)\sepp &&(\ve,\ve,\ve)\sepp &&(\ve,\ve,\!\sqrt2), \nonumber\\
		&(\ve,\pi_2,0)\sepp &&(\ve,\pi_2,1)\sepp &&(\ve,\pi_2,\!\!\sqrt2).
	\end{alignat}
	
	We will denote the lattice corresponding to $i$th triple in this list as $E_{4,(5)}^{(i)}$. Most of them seem to be universal, but because $E_3^{(5)}$ has class number $2$, we are unable to repeat our success from the previous branch and prove the universality for all of them. Only the second and third lattice are clearly not universal and require further escalation.
	
	\begin{proposition} \label{pr:5summary}
		All escalations of $E_3^{(5)}$ are quaternary lattices; up to isometry, there are 38 of them. They are
		\[
		\qf{1} \perp \Bigl\langle
		\begin{smallmatrix}
			2+\sqrt2 & 0 & b \\
			0 & 3 & c\\
			b & c & 3(2+\sqrt2)-a^2
		\end{smallmatrix}\Bigr\rangle,
		\]
		where the triple $(a,b,c)$ is taken from the list \eqref{eq:list5}, but the fifteen entries at positions $18$, $29$, $31$, $33$, $35$, $40$, $43$--$46$ and $49$--$53$ are omitted.
		
		Of these, $E_{4,(5)}^{(2)}$ has truant $3(3-\sqrt2)$ and $E_{4,(5)}^{(3)}$, which is its conjugate, has truant $3(3+\sqrt2)$; they represent all other totally positive elements with norm under $\boundII{}$. The remaining 36 escalations of $E_3^{(5)}$ represent all elements with norm under $\boundII{}$.
	\end{proposition}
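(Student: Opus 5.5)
The argument has three parts: show that every escalation of $E_3^{(5)}$ is quaternary, cut the 53 candidates from \eqref{eq:list5} down to 38 isometry classes, and compute which elements each of them represents up to norm $\boundII{}$.

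\emph{Step 1: the candidate list is complete and all escalations are quaternary.} The truant of $E_3^{(5)}$ is $3(2+\sqrt2)$, so every escalation is generated by $(\ee_1,\ee_2,\ee_3,\vv)$ with Gram matrix $\G_{4,(5)}$ and $Q(\vv)=3(2+\sqrt2)$. The diagonal entries force $a^2\preceq 3(2+\sqrt2)$, $b^2\preceq 3(2+\sqrt2)^2$ and $c^2\preceq 9(2+\sqrt2)$. These bound the conjugates of $a,b,c$, so only finitely many triples are possible. Since the upper left block is diagonal, the signs of $a,b,c$ can be changed independently, so I normalise them. For each remaining triple I test total positive semidefiniteness, i.e.\ that all principal minors are $\succeq 0$ in both embeddings. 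This is a finite computation, and it should return exactly the 53 triples of \eqref{eq:list5}, each with nonzero determinant.

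A ternary escalation would be a classical ternary overlattice of $E_3^{(5)}$ spanning the same space. Any such overlattice lies in the dual lattice, whose discriminant is $3(2+\sqrt2)\cdot 3$. So it suffices to check the finitely many vectors of $(E_3^{(5)})^{\#}/E_3^{(5)}$, i.e.\ $\frac{1}{3}$- and $\frac{1}{\sqrt2}$-type combinations of $\ee_2,\ee_3$. For each I verify that $Q$ or $B_Q$ fails to be integral. This confirms that every determinant is nonzero and that all escalations are quaternary.

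\emph{Step 2: reduce to 38 isometry classes.} Replacing $\vv$ by $\vv-a\ee_1$ puts each lattice in the form $\qf{1}\perp N$ with $N$ as in the statement. I then exhibit explicit isometries that remove the fifteen listed positions. These come from multiplying basis vectors by units, using $\ve^2$-scalings and conjugation-like symmetries, and from small changes of basis such as $\vv\mapsto \vv-\ee_2$. Separating the remaining 38 classes requires isometry testing, which I would do in Magma. Cheap invariants (determinant modulo $\U_K^2$, numbers of vectors of small norm, local invariants) should distinguish most pairs, and Magma's \texttt{IsIsometric} handles the rest.

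\emph{Step 3: represented elements.} For each of the 38 lattices I enumerate $Q(\vv)$ over all vectors $\vv$ with $\Tr Q(\vv)$ bounded by a suitable constant. Each totally positive class modulo $\U_K^2$ of norm $\le\boundII{}$ has a representative of trace $O(\sqrt{\boundII{}})$, so a finite Fincke--Pohst enumeration with respect to the trace form suffices, combined with a list of all such classes. For $E_{4,(5)}^{(2)}$ I show by hand that $3(3-\sqrt2)$ is not represented. The candidate vectors are finitely many, bounded via the trace, and I expect a local obstruction to fail only at the level of the global genus. The statement about $E_{4,(5)}^{(3)}$ then follows from Observation~\ref{ob:conjugate}, since conjugation maps represented elements to their conjugates.

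The main obstacle is Step 3. It is the computational bulk: 38 quaternary lattices and norms up to $\boundII{}$ need an efficient enumeration. It also needs a rigorous argument, for instance an exhaustive short-vector search at the appropriate trace bound, that $3(3\mp\sqrt2)$ is genuinely not represented, because $E_3^{(5)}$ has class number $2$ and the local--global principle is not available.
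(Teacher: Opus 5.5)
Your proposal is correct and is essentially the paper's proof. The Cauchy--Schwarz bounds plus a semidefiniteness test give the candidate triples, and the nonexistence of classical ternary overlattices of $E_3^{(5)}$ rules out singular cases (your check of $(E_3^{(5)})^{\#}/E_3^{(5)}$ is right, though $\det E_3^{(5)}=3(2+\sqrt2)$, not $3(2+\sqrt2)\cdot 3$); explicit isometries plus a computer check then leave $38$ classes, and a Magma enumeration settles representability up to norm $\boundII{}$.

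One caution for Step~1: do not expect exactly the $53$ listed triples. For example, $(0,0,1+2\sqrt2)$ also gives a totally positive definite matrix, since the $2\times2$ block has determinant $9+5\sqrt2\succ0$. However, via $\vv\mapsto\vv-\sqrt2\ee_3$ it reduces to $\qf{1,2+\sqrt2}\perp\bigl\langle\begin{smallmatrix}3 & 1-\sqrt2\\ 1-\sqrt2 & 4+\sqrt2\end{smallmatrix}\bigr\rangle\simeq E_{4,(5)}^{(32)}$, so only your Step~2 bookkeeping changes, not the final count.
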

	\begin{proof}
		We already know that the $53$ lattices $E_{4,(5)}^{(i)}$ are the only candidates for escalations of $E_3^{(5)}$ and that they can be transformed into the given form by a slight change of basis. We list the isometries which allow us to get rid of $15$ lattices: 
		\[
		E_{4,(5)}^{(i)} \simeq E_{4,(5)}^{(j)} \text{ for $(i,j) \in M$,}
		\]
		where $M = \{(9,18), (6,29), (27,31), (27,33), (15,35), (24,40), (7,43), (8,44),\allowbreak (26, 45),\allowbreak (26,46),\allowbreak (16,49),\allowbreak (17,50),\allowbreak (38,51), (39,52), (24,53)\}$. On the other hand, one can check that there are no other isometries, other than the implied ones $(31,33), (40,53), (45,46)$, which we did not list since they do not help us get rid of some lattices.
		
		We used a program written in Magma to compute all elements up to norm $\boundII{}$ and check their representability.
	\end{proof}
	
	As mentioned, of the $38$ quaternary escalators $E_{4,(5)}^{(i)}$, $36$ seem to be universal. Only for $i=2,3$ we get a pair of conjugate lattices which seem to represent everything except $3(3\mp\sqrt2)$. We will present evidence (and partial proofs) for these statements later; for now let us assume them to be correct. Then every escalation of $E_{4,(5)}^{(2)}$ or $E_{4,(5)}^{(3)}$ is universal. Just for the sake of completeness we computed all of them, so that we can say:
	
	\begin{proposition} \label{pr:quinaryescalators}
		Up to isometry, there are 1656 escalations of $E_{4,(5)}^{(2)}$. They all have rank $5$, and assuming our Conjecture \ref{co:13lattices}\ref*{it:co1} that $E_{4,(5)}^{(2)}$ represents all but $3(3-\sqrt2)$, they are all universal. Analogous statement holds for escalations of the conjugate lattice $E_{4,(5)}^{(3)}$.
	\end{proposition}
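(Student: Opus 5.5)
The plan follows the escalation step already carried out for $E_3^{(5)}$ and $E_3^{(6)}$, applied now to $L=E_{4,(5)}^{(2)}\simeq \qf{1}\perp N$. Here $N$ is the ternary lattice with Gram matrix $\bigl(\begin{smallmatrix} 2+\sqrt2 & 0 & b\\ 0&3&c\\ b&c&3(2+\sqrt2)-a^2\end{smallmatrix}\bigr)$ for $(a,b,c)=(0,0,1)$. Its truant is $\alpha=3(3-\sqrt2)$; for this we rely on the Magma computation from Proposition \ref{pr:5summary}, which shows that all elements of smaller norm are represented, together with the fact that $\alpha\not\to L$.

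\textbf{Enumerating the escalations.} An escalation is $L+\OK\vv$ with $Q(\vv)=\alpha$. Fix a basis $\ee_1,\dots,\ee_4$ of $L$ and write $x_i=B(\ee_i,\vv)\in\OK$. Cauchy--Schwarz gives $x_i^2\preceq Q(\ee_i)\alpha$, which leaves finitely many candidates. Replacing $\vv$ by $\vv-x_1\ee_1$ lets us assume $\ee_1\perp\vv$, with $Q(\vv)=\alpha-x_1^2$, so $x_1^2\in\{0,1,2,\dots\}$ with $x_1^2\preceq\alpha$. For each candidate we build the $5\times5$ Gram matrix and keep those that are totally positive semidefinite, checking both real embeddings of the determinant and of the leading minors.

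\textbf{The rank is always $5$.} Every singular candidate would give a classical lattice of rank $4$ that contains $L$ properly and represents $\alpha$, i.e.\ a proper classical overlattice of $L$ in $KL$. Such overlattices lie in $L^\#$ and correspond to isotropic subgroups of the discriminant group $L^\#/L$ with integral norms, so there are finitely many and they are easy to list. I expect none of them to be integral, because of the structure of $\det L$; alternatively, any such overlattice would represent $\alpha$, which we exclude directly. Either way every admissible matrix is definite, so all escalations have rank $5$.

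\textbf{Counting and universality.} Next we reduce modulo isometry. The sign changes available from the automorphisms of $L$ give an obvious first reduction, and Magma isometry testing, as in Propositions \ref{pr:6summary} and \ref{pr:5summary}, gives the exact count $1656$. Universality then follows without examining the individual lattices. Assume Conjecture \ref{co:13lattices}\ref{it:co1}, namely that $L$ represents every element of $\OO$ except $\alpha\U_K^2$. Each escalation $L'=L+\OK\vv$ contains $L$, so it represents everything $L$ represents, and it represents $\alpha$ through $\vv$. Hence $L'$ is universal. The conjugate case follows by applying Observation \ref{ob:conjugate}: conjugation gives a bijection between escalations of $E_{4,(5)}^{(2)}$ and escalations of $E_{4,(5)}^{(3)}$ that preserves isometry classes and universality, since $\OO$ is stable under conjugation.

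The main obstacle is the enumeration itself. There are many candidate tuples, and certifying the exact number $1656$ requires reliable isometry testing of $5$-dimensional lattices over $\qq{2}$. This is computer work rather than a conceptual difficulty; the universality claim is immediate once the conjecture is assumed.
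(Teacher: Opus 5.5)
Your proposal is correct and follows the same route as the paper, which also obtains the count $1656$ and the rank by direct computer enumeration, and then gets universality for free because every escalation contains $E_{4,(5)}^{(2)}$ and represents its truant; the conjugate case follows by symmetry. Two small remarks: the admissible values of $x_1^2$ are not confined to $\{0,1,2,\dots\}$ (e.g.\ $x_1=1-\sqrt2$ gives $x_1^2=3-2\sqrt2\preceq 3(3-\sqrt2)$), and your rank argument can be made definitive rather than an expectation, since $\det L=(2+\sqrt2)(17+9\sqrt2)$ is a product of two distinct primes (of norms $2$ and $127$), so $L$ has no proper classical overlattice of the same rank.
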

	
	Now, assuming that there is no lattice with truant of norm more than $\boundII{}$, we have found all escalators and hence also all critical elements. In the remainder of the section, we gather evidence for our conjectures that all quaternary escalators are universal except for the two which represent everything except for their truant. But observe that we have already proved Theorem \ref{th:sqrt2main}, parts \ref*{it:first}--\ref*{it:third}. The following proposition makes a big step towards proving part \ref*{it:last}.
	
	\begin{proposition} \label{pr:6universality}
		All the 30 quaternary escalations of $E_3^{(6)}$ are universal.
	\end{proposition}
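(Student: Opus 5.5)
The key observation is that each of the 30 quaternary escalations $E_{4,(6)}^{(i)}$ contains $E_3^{(6)}\simeq\qf{1,2+\sqrt2,2}$ as a sublattice. This ternary lattice has class number $1$, so its represented elements are exactly its locally represented elements. The plan is therefore not to treat each quaternary lattice as a whole, which would require the local--global principle for quaternary lattices we cannot control. Instead, we show that every $\alpha\in\OKPlus$ can be written as $\alpha = Q(\ww) + \beta$ with $\ww$ in the orthogonal-complement part $\OK\vv'$ (or in a suitable rank-one or binary piece) and $\beta$ locally represented by $E_3^{(6)}$. This is the analogue of the classical trick used for the 15-Theorem escalators.

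First we pin down the local structure of $E_3^{(6)}$ (the content referred to as Lemma \ref{le:diagsublat}). Since $2$ is the only ramified prime, $\qf{1,2+\sqrt2,2}$ is unimodular or modular at every odd prime and isotropic there, so it represents everything at odd primes. Hence only the dyadic prime $\p=(\sqrt2)$ matters. We compute exactly which square classes and which $\p$-adic orders fail; the truant $3(2+\sqrt2)$ indicates the kind of obstruction. I expect the non-represented set to be a finite union of sets of the form $\alpha\equiv$ (something) mod a power of $\sqrt2$ times $4^k$-type families, e.g.\ elements of the shape $\pi_2^{2k+1}\cdot u$ with $u$ in certain classes mod $8$ or $4\sqrt2$.

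Next, for each quaternary lattice, written as in Proposition \ref{pr:6summary} as $\qf{1}\perp N$ after replacing $\vv$ by $\vv-a\ee_1$, we pick a convenient vector $\ww$ in $E_{4,(6)}^{(i)}$ together with a sublattice decomposition. Ideally we take $E_3^{(6)}\perp\qf{\delta}$ of finite index, where $\delta=\det\G/\det E_3^{(6)}$ up to scaling, and show that $\alpha-\delta x^2$ lands in a represented class for some $x\in\OK$ with $\delta x^2\preceq\alpha$. Because $\delta x^2\preceq\alpha$ is a totally positive constraint, we must choose $x$ from a small fixed set such as $x\in\{0,1,\sqrt2,\ve,\dots\}$. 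Since $E_3^{(6)}$ represents every non-exceptional $\alpha$ with nothing to prove, we only need to handle the exceptional $\alpha$. For those we must show that $\alpha-Q(\ww)$ is totally positive (or zero) and non-exceptional for one of finitely many candidate $\ww$. The exceptional sets are scaled copies under multiplication by $2$ or $(2+\sqrt2)^2$, so we reduce by descent: if $\alpha$ lies in the deep part of the exception set, write $\alpha=\pi_2^{2}\alpha'$ (or $4\alpha'$) and use that the quaternary lattice contains a scaled copy of itself, or apply induction using the fact that $\alpha'\to L$ implies $\pi_2^2\alpha' \to L$. This leaves a base case of elements with small $\p$-adic order.

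The main obstacle is the base case: ensuring total positivity of $\alpha-Q(\ww)$ for an arbitrary exceptional $\alpha$, since $\alpha$ may be ``thin'' in one embedding. Here I would use that $\OKPlus$ elements in a fixed residue class mod $\p^k$ that are not $\succeq Q(\ww)$ have small norm, because $\alpha\not\succeq\gamma$ confines $\alpha$ to a bounded region up to units squared. Multiplying by $\ve^{2k}$ to balance the embeddings then leaves finitely many $\alpha$, which we check directly by computer (these are among the elements of norm below $\boundII$ already verified). To cut the work, we group the 30 lattices by which small vectors $\ww$ they contain. Many contain $\qf{1,1}$, $\qf{1,2+\sqrt2,2,1}$-type or $E_3^{(1)}$-type structure; in particular, if a lattice contains a universal ternary sublattice from Proposition \ref{pr:ternaryesc} (e.g.\ any with $c=\sqrt2$ or $b=\ve$ giving $\qf{1,1,2+\sqrt2}$ after basis change), we finish immediately. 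Only the remaining lattices need the local argument.
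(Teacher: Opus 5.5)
Your outline matches the paper's proof. The paper takes the orthogonal complement $\qf{\gamma_i}$ of $E_3^{(6)}$ in each $E_{4,(6)}^{(i)}$ and uses the class-number-one description of the elements represented by $E_3^{(6)}$. It then subtracts $\gamma_i$ times a unit square from an exceptional $\alpha$, and checks the finitely many leftover elements (norm below $3076$) directly against $E_{4,(6)}^{(i)}$.

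The only real difference is how total positivity is secured. The paper writes $\alpha$, up to unit squares, as $k\beta_0+l\beta_{\pm1}$ with the indecomposables $\beta_j$. It shows that $\alpha-\gamma$ or $\alpha-\ve^{-2}\gamma$ is totally nonnegative unless $k<m+4n$ and $l<2m+n$, which gives the bound $17m^2+52mn+34n^2$ and an explicit finite list. Your unit-balancing idea needs restating: the relevant condition is $\alpha\not\succeq\ve^{2j}\gamma$ for \emph{every} $j$, not $\alpha\not\succeq\gamma$ for one fixed $\gamma$. Stated that way, it gives an integer $j$ with $\alpha\succeq\ve^{2j}\gamma$ as soon as $\NN(\alpha)\geq(3+2\sqrt2)\NN(\gamma)$. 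The reason is that the real $j$ satisfying both embedding inequalities form an interval of length $\log_{3+2\sqrt2}\bigl(\NN(\alpha)/\NN(\gamma)\bigr)$. This argument is softer than the paper's, works verbatim in any real quadratic field, and even gives a smaller search bound.

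Some side expectations in your sketch are wrong, though.

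First, the dyadic obstruction is not a family of scaled classes. The space of $\qf{1,2+\sqrt2,2}$ is anisotropic at both real places and isotropic at all odd places, so by reciprocity it is isotropic at $\sqrt2$. In fact $\alpha\to E_3^{(6)}$ if and only if $\alpha\succeq0$ and $\alpha\not\equiv6\pm3\sqrt2\pmod8$. So no descent is needed. The step that actually makes the argument work, which you leave as \enquote{must show}, is checking that each complement satisfies $\gamma_i\not\equiv0,\pm2\sqrt2\pmod8$. This condition is stable under multiplication by $\ve^{2j}$, and it is exactly what forces $\alpha-\ve^{2j}\gamma_i$ out of the forbidden classes.

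Second, your claim that lattices with $c=\sqrt2$ or $b=\ve$ contain $\qf{1,1,2+\sqrt2}$ is false. For $(a,b,c)=(0,0,\sqrt2)$ the form is $x_1^2+(2+\sqrt2)x_2^2+(\sqrt2x_3+x_4)^2+(5+3\sqrt2)x_4^2$, whose only vectors of value $1$ are $\pm\ee_1$. This is harmless, since such lattices simply fall back on the main argument.
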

	
	We prove it immediately after proving Lemma \ref{le:diagsublat}, which contains the bulk of the proof. We also note that for several of the lattices one can use an easier proof of universality than the one we give below -- namely, finding a ternary sublattice isometric to one of the four universal lattices $E_3^{(j)}$, $1\leq j \leq 4$. To be specific, we have $E_3^{(j)}$ as a sublattice of $E_{4,(6)}^{(i)}$ if $(i,j)$ if one of $(6,2)$, $(19,2)$, $(25,4)$, $(26,4)$, $(28,1)$, $(29,1)$, $(34,3)$, $(35,3)$, which gives a direct proof of the universality of $E_{4,(6)}^{(i)}$ for these eight values of $i$, and also $(12,2)$, $(20,2)$, $(37,1)$, $(38,1)$ which does not help us, as these values of $i$ were already eliminated thanks to isometry of lattices, see Proposition \ref{pr:6summary}.
	
	For another pair of isometric lattices, $E_{4,(6)}^{(27)} \simeq E_{4,(6)}^{(36)}$, one can compute that the class number is $1$, so in this case the universality can be proven directly using the local--global principle. But in this way we only handled $8+1$ of the $30$ escalators, so we might as well use the proof based on the lemma below for all $30$ lattices. The strategy is to find a diagonal sublattice which represents all elements with large enough norm.
	
	\begin{lemma} \label{le:diagsublat}
		Recall that $E_3^{(6)}\simeq \qf{1,2+\sqrt2,2}$.
		\begin{enumerate}
			\item Any $\alpha \in \Z[\sqrt2]$ is represented by $E_3^{(6)}$ if and only if $\alpha \succeq 0$ and $\alpha \not \equiv 6 \pm 3\sqrt2 \pmod8$. \label{it:diagsublat1}
			\item Let $D = E_3^{(6)} \perp \qf{\gamma}$, $\gamma = m + n(2+\sqrt2)$, where $\gamma \not \equiv 0 \pmod8$ and $\gamma \not\equiv \pm 2\sqrt2 \pmod8$. Let $\alpha$ be a totally positive element of $\Z[\sqrt2]$ not represented by $D$. Then $\NN(\alpha) < 17m^2 + 52mn + 34n^2$. \label{it:diagsublat2} 
		\end{enumerate}
	\end{lemma}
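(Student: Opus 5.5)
For part \ref{it:diagsublat1} I will use the fact, stated just before Example \ref{ex:rankdidntgrow}, that $E_3^{(6)}\simeq\qf{1,2+\sqrt2,2}$ has class number $1$. It therefore satisfies the local--global principle, so it suffices to find which $\alpha\succ0$ are represented locally at every prime. At odd primes $\p$ the lattice is unimodular of rank $3$: $1$ and $2$ are units, and $2+\sqrt2=\sqrt2\cdot\ve$ is a unit away from $\p_2=(\sqrt2)$. A unimodular ternary lattice at an odd prime represents every integer, so there is no condition there. The only real work is at the dyadic prime $\p_2$, which is ramified with uniformizer $\sqrt2$. Over $\Z_2[\sqrt2]$ I will determine the image of $x^2+(2+\sqrt2)y^2+2z^2$ by a finite computation. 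Every element is represented locally except those lying in a certain union of classes, and these classes should be detected modulo $8=\sqrt2^{6}$. I expect the anisotropic-type obstruction to give exactly the classes $6\pm3\sqrt2 \pmod 8$. To check this I will compute the Hasse invariant and the discriminant of the space. Then I will verify by enumerating residues of $x^2+(2+\sqrt2)y^2+2z^2$ modulo $8\sqrt2$ (or slightly higher, enough for Hensel lifting) that every residue class mod $8$ other than $6\pm3\sqrt2$ is hit by a primitive vector that lifts. I also need to check that no element congruent to $6\pm3\sqrt2$ is represented, even with all variables divisible by $\sqrt2$. Since $6\pm3\sqrt2$ is not divisible by $2$, imprimitive representations would force $\alpha\equiv0 \pmod 2$, so this is easy. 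This dyadic bookkeeping is the main obstacle. It is tedious but finite, and I would do it by computer, or by hand using O'Meara's $93{:}28$ Jordan splitting over the ramified dyadic field.

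For part \ref{it:diagsublat2}, let $\alpha\succ0$ be not represented by $D$. For every $x\in\OK$ with $\alpha-\gamma x^2\succeq0$, part \ref{it:diagsublat1} says that either $\alpha-\gamma x^2=0$, which is impossible since then $\alpha\to D$, or $\alpha-\gamma x^2\equiv 6\pm3\sqrt2\pmod 8$. I will use small choices $x\in\{0,1,\ldots\}$ modulo $8$. The hypotheses $\gamma\not\equiv0,\pm2\sqrt2\pmod8$ are designed precisely so that the values $\gamma x^2 \bmod 8$ for a few small $x$ (say $x=0,1,2$, or $x\in\{0,1,\sqrt2,1+\sqrt2\}$) cannot all keep $\alpha-\gamma x^2$ inside the two bad classes. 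So for each residue class of $\alpha$ there is an explicit small $x_0$ with $\alpha-\gamma x_0^2\not\equiv6\pm3\sqrt2$. I will tabulate this: $x^2 \bmod 8$ takes few values, and multiplying by $\gamma$ in each allowed class gives the needed shifts.

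Since $\alpha\not\to D$, we must have $\alpha-\gamma x_0^2\not\succeq0$. In one embedding $\sigma$ this means $\sigma(\alpha)<\sigma(\gamma)\sigma(x_0)^2$. To bound the norm I must control both embeddings, so instead of a fixed $x_0$ I will let $x_0$ range over a residue class modulo $8$, and within that class choose its size (multiplying by $\ve^{2k}$ or shifting) so that $\gamma x_0^2$ is balanced against $\alpha$. Concretely, I will use the freedom $\alpha\mapsto\alpha\ve^{2k}$ to make $\alpha$ balanced, so that $\sigma(\alpha)$ and $\overline\sigma(\alpha)$ are within a factor $\ve^2$ of each other. Then the best bad-class-avoiding candidate $x_0$ (possibly from $x_0\in\{1,\ve,\ldots\}$ adjusted mod $8$) yields the inequalities $\alpha\not\succeq\gamma x_0^2$, which give $\NN(\alpha)$ bounded by a quadratic form in $m,n$. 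I expect that optimizing over the finitely many residue cases produces exactly $17m^2+52mn+34n^2$, which is essentially $\gamma\overline\gamma$ times a constant plus cross terms. The second delicate point is checking that this constant works uniformly in all cases.
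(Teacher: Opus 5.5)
\textbf{Part (1).} This is the paper's argument: class number $1$ gives the local--global principle, the lattice is unimodular ternary at odd primes, and a dyadic computation remains. One correction: your expected ``anisotropic-type'' obstruction does not exist. A totally definite ternary space over $\qq{2}$ is anisotropic at both real places. Since the number of anisotropic places is even, it is isotropic at $(\sqrt2)$. So the Hasse invariant and discriminant only tell you that the space represents everything. The classes $6\pm3\sqrt2 \pmod 8$ are a lattice-level obstruction that only your explicit enumeration will detect.

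\textbf{Part (2): the gap.} The norm estimate is the whole content of this part, and you only ``expect'' it; the mechanism you describe does not produce it.

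First, the residue step is simpler than your tabulation suggests. Taking $x=0$ already forces $\alpha\equiv 6\pm3\sqrt2 \pmod 8$. The two bad classes differ by $\pm2\sqrt2$, and $(3+2\sqrt2)\cdot 2\sqrt2\equiv -2\sqrt2 \pmod 8$, so the set $\{0,\pm2\sqrt2\}$ is stable under multiplication by $\ve^{\pm2}$. Hence $\alpha-\gamma\ve^{2k}$ is never in a bad class, and so $\alpha\not\succeq\gamma\ve^{2k}$ for \emph{every} $k\in\Z$. Even $x$ such as $\sqrt2$ or $2$ are useless here: for $\gamma=4$ one has $2\gamma\equiv 0$.

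Second, a single inequality $\alpha\not\succeq\gamma x_0^2$ controls only one embedding. If you balance $\alpha$ and test a fixed $x_0=1$, you get only $\NN(\alpha)<(3+2\sqrt2)\max(\gamma,\overline\gamma)^2$. For $\gamma=2+\sqrt2$ this is $34+24\sqrt2$, which exceeds the required bound $34$.

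\textbf{How to repair it.} Use the whole unit orbit at once. Put $\lambda=3+2\sqrt2$, $t=\log_\lambda(\alpha/\gamma)$ and $s=\log_\lambda(\overline\alpha/\overline\gamma)$. Then $\alpha\not\succeq\gamma\ve^{2k}$ for all $k$ says exactly that no integer $k$ satisfies $-s\le k\le t$. Hence $t+s<1$, that is, $\NN(\alpha)<(3+2\sqrt2)\NN(\gamma)=(3+2\sqrt2)(m^2+4mn+2n^2)$. The difference $17m^2+52mn+34n^2-(3+2\sqrt2)(m^2+4mn+2n^2)$ is a positive definite binary form. So this implies the lemma, with no sign assumption on $m,n$. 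It is also sharper: in the proof of Proposition \ref{pr:6universality} the worst case $3076$ would drop below $1143$.

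\textbf{Comparison with the paper.} This route will not produce ``exactly'' $17m^2+52mn+34n^2$; that polynomial is an artifact of the paper's method. The paper normalises $\alpha\ve^{2j}=k\beta_0+l\beta_{\pm1}$ with $k,l\ge0$, using consecutive indecomposables. It then tests only $\gamma$ and $\ve^{-2}\gamma$, and shows that one of $\alpha-\gamma$, $\alpha-\ve^{-2}\gamma$ is $\succeq0$ unless $k<m+4n$ and $l<2m+n$. The bound is the norm $k^2+4kl+2l^2$ evaluated at the corner of that box. The paper's version gives an explicit finite list of candidate $\alpha$; yours gives a cleaner and smaller norm bound.
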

	\begin{proof}
		For the first part, we compute that the class number of $E_3^{(6)}$ is $1$, since it has $8$ automorphisms and mass $1/8$. Therefore it satisfies the local--global principle. The real embeddings yield $\alpha \succeq 0$. At the non-dyadic places the lattice is universal, since it is a unimodular ternary lattice. It remains to compute the conditions at the completion given by the dyadic prime $\sqrt2$. After some computation, one arrives precisely at the condition $\alpha \not \equiv 6 \pm 3\sqrt2 \pmod8$.
		
		The second part relies heavily on the first. First of all, any element not represented by $D$ is not represented by $E_3^{(6)}$ either, so we only have to consider $\alpha \equiv 6 \pm 3\sqrt2 \pmod8$. Now let us introduce some notation: In accordance with \cite{HK}, we order the indecomposable elements of $\Z[\sqrt2]$ by their absolute value to form the bi-infinite sequence $\ldots, \beta_{-1}=2-\sqrt2, \beta_0=1, \beta_1=2+\sqrt2, \beta_2 = 3+2\sqrt2, \ldots$ We have $\beta_i = \ve^2 \beta_{i-2}$ where $\ve = 1+\sqrt2$ is the fundamental unit. From \cite{HK} we know that every totally positive element can be written as a linear combination of two consecutive indecomposables with nonnegative coefficients; after multiplication by a suitable power of $\ve^2$, we can assume $\alpha = k\beta_0 + l\beta_1$ or $k\beta_0 + l\beta_{-1}$ for $k,l \geq 0$. (And we still have $\alpha \equiv 6 \pm 3\sqrt2 \pmod8$, as multiplication by a square of a unit does not influence representability by a given lattice.) Now we make the following crucial claim: \emph{In both cases, if either $k \geq m + 4n$ or $l \geq 2m + n$, then at least one of $\alpha - \gamma$ and $\alpha - \ve^{-2}\gamma$ is totally nonnegative.}
		
		Assume first that this claim is correct. Then we distinguish two situations. If $k \geq m + 4n$ or $l \geq 2m + n$, then we can use the claim. Since $\alpha \equiv 6\pm 3\sqrt2$ but $\gamma$ and thus also $\ve^{-2}\gamma$ is neither $0$ nor $\pm 2\sqrt2 \pmod8$, the totally nonnegative element $\alpha -\gamma$ or $\alpha - \ve^{-2}\gamma$ is not $6\pm 3\sqrt2 \pmod8$, hence it satisfies the conditions of the first part of this lemma, hence it is represented by $E_3^{(6)}$. Thus $\alpha$ is represented by $D$. It remains to look at the situation when $k < m + 4n$ and $l < 2m + n$; but then $\NN(k\beta_0 + l\beta_1) = \NN(k\beta_0 + l\beta_{-1}) = (k+2l)^2-2l^2 = k^2 + 4kl + 2l^2 < 17m^2 + 52mn + 34n^2$. This proves the lemma.
		
		It remains to prove the claim. We start with the case $\alpha = k\beta_0 + l\beta_1$. Thus $\alpha - \gamma = (k-m)\beta_0 + (l-n)\beta_1$. We distinguish three situations. If $k \geq m$ and $l \geq n$, then $\alpha-\gamma$ is clearly totally nonnegative. Assume now $k < m$. (This in particular means that from the assumptions of the claim, the former is not satisfied, so $l\geq 2m+n$. Thus, trivially, $2k+l\geq 2m+n$.) Since $\beta_0+\beta_2 = 2\beta_1$, we can rewrite $\alpha - \gamma = (l-n-2m+2k)\beta_1 + (m-k)\beta_2$. Since $2k+l \geq 2m+n$, this is totally positive. Finally, assume $l < n$. (In this case the latter assumption of the claim cannot hold, so we get $k\geq m+4n$ and thus $k+4l\geq m+4n$.) Now we use $\beta_{-1}+\beta_1 = 4\beta_0$ to rewrite $\alpha - \gamma = (n-l)\beta_{-1} + (k-m-4n+4l)\beta_0$. And this is totally positive since $k+4l \geq m+4n$. 
		
		Similarly, we handle the case $\alpha = k\beta_0 + l\beta_{-1}$. Then $\alpha - \gamma = l\beta_{-1} + (k-m)\beta_0 - n\beta_1 = (l+n)\beta_{-1} + (k-m-4n)\beta_0$. Thus, if $k \geq m + 4n$, then $\alpha - \gamma \succeq 0$. Also, $\alpha - \ve^{-2}\gamma = -m\beta_{-2} + (l-n)\beta_{-1} + k\beta_0 = (l-n-2m)\beta_{-1} + (k+m)\beta_0$. Thus, if $l \geq n+2m$, then $\alpha - \ve^{-2}\gamma \succeq 0$. This proves the claim.
	\end{proof}
	
	We use the norm just for the simplicity of presentation; from the proof it is clear that one could be more explicit in the statement of the Lemma, namely: \emph{Under the assumptions of part \ref*{it:diagsublat2}, up to multiplication by squares of units, we have $\alpha=k+l(2+\sqrt2)$ or $\alpha=k+l(2-\sqrt2)$ where $0\leq k < m+4n$ and $0\leq l < 2m+n$.} This yields a very explicit finite list of elements that are potentially not represented by the form $D$.
	
	Now that we understand diagonal lattices of the form $\qf{1,2+\sqrt2,2,\gamma}$, we are ready to prove universality of all escalations of $E_3^{(6)}$.
	
	\begin{proof}[Proof of Proposition \ref{pr:6universality}]
		For each of the lattices $E_{4,(6)}^{(i)}$ we find a sublattice of the form $E_3^{(6)} \perp \qf{\gamma}$. From the construction, we have $E_3^{(6)}$ as a sublattice, so we can just compute the orthogonal complement in $E_{4,(6)}^{(i)}$, i.e.\ the set of all vectors which have zero scalar product with every vector in $E_{4,(6)}^{(i)}$. This is a straightforward calculation. Below we list the resulting $\gamma$ expressed in the form suitable for application of Lemma \ref{le:diagsublat}; we keep the notation $\beta_0=1$, $\beta_1=2+\sqrt2$, $\beta_{-1}=2-\sqrt2$. We give $\gamma_i$ for all $38$ values of $i$; we could omit eight of them thanks to the isometries listed in Proposition \ref{pr:6summary}, but then we would need to reorder the lattices.
		\newcommand{\seppp}{,\,}
		\begin{alignat*}{10} 
			&3\beta_1\seppp &&2\beta_0{+}8\beta_{-1}\seppp &&2\beta_0{+}8\beta_1\seppp &&2\beta_0{+}2\beta_{-1}\seppp &&2\beta_0{+}2\beta_1\seppp &&\beta_1\seppp &&5\beta_1\seppp &&2\beta_0{+}6\beta_{-1}\seppp &&2\beta_0{+}6\beta_1\seppp &&2\beta_0{+}\beta_{-1}, \\
			&2\beta_0{+}\beta_1\seppp &&\beta_1\seppp &&2\beta_1\seppp &&2\beta_0{+}4\beta_{-1}\seppp &&2\beta_0{+}4\beta_1\seppp &&2\beta_0\seppp &&2\beta_0\seppp &&\beta_1\seppp &&2\beta_0\seppp &&2\beta_0, \\
			&\beta_0{+}\beta_{-1}\seppp &&6\beta_0\seppp &&2\beta_0{+}4\beta_{-1}\seppp &&2\beta_1\seppp &&2\beta_0{+}\beta_{-1}\seppp &&2\beta_0{+}2\beta_{-1}\seppp &&\beta_1\seppp &&\beta_0\seppp &&2\beta_0\seppp &&\beta_0{+}\beta_1, \\
			&2\beta_0{+}4\beta_1\seppp &&6\beta_0\seppp &&2\beta_1\seppp &&2\beta_0{+}\beta_1\seppp &&2\beta_0{+}2\beta_1\seppp &&\beta_1\seppp &&\beta_0\seppp &&2\beta_0
		\end{alignat*}
		None of these values give the forbidden residues $0$ or $\pm2\sqrt2$ modulo $8$. Thus, for those lattices where $\gamma_i = m\beta_0 + n\beta_1$, a direct application of Lemma \ref{le:diagsublat} shows that all elements $\alpha\succeq 0$ with $\NN(\alpha) \geq 17m^2+52mn+34n^2$ are represented. For those where $\gamma_i = m\beta_0 + n\beta_{-1}$, we prove the same: For every $\alpha\succeq 0$ with $\NN(\alpha) \geq 17m^2+52mn+34n^2$, the Lemma gives representability of its conjugate $\overline{\alpha}$ by $\overline{D} \simeq E_3^{(6)} \perp \qf{\overline{\gamma_i}}$ thanks to $\overline{\gamma_i} = m\beta_0 + n\beta_1$ and to the fact that $E_3^{(6)}$ is self-conjugate. So $\alpha$ is indeed represented by $D$.
		
		So for all lattices there remain only finitely many elements for which the representability must be tested, namely those with $\NN(\alpha) < 17m^2 + 52mn + 34n^2$. These are not necessarily represented by $D$ (although in fact we computed that there are only fourteen elements which fail to be represented by at least one of the 38 forms $D$, namely $6+3\sqrt2$, $14\pm5\sqrt2$, $14\pm3\sqrt2$, $22+11\sqrt2$, $22\pm5\sqrt2$, $22\pm3\sqrt2$, $30\pm5\sqrt2$, $30\pm3\sqrt2$), but one can directly check their representability by the original lattice $E_{4,(6)}^{(i)}$. By far the largest value of $17m^2 + 52mn + 34n^2$ is $3076$, obtained for $i=2$ and $i=3$ where $m=2$ and $n=8$. So we concluded the proof by checking that every $E_{4,(6)}^{(i)}$ represents all elements with norm below $3076$.
	\end{proof}
	
	Now we switch our attention to escalations of $E_3^{(5)}$. We listed them in Proposition \ref{pr:6summary}. As mentioned, we are unable to prove universality of many of them, since $E_3^{(5)}$ does not satisfy the local--global principle.
	
	\begin{remark} \label{re:l-gfails}
		If the elements represented by $E_3^{(5)}$ were known (see the next paragraph of this remark), it should be possible to prove Conjecture \ref{co:13lattices} and thus also Conjecture \ref{co:mainsqrt2} by using the same strategy as for $E_3^{(6)}$ in the proof of Proposition \ref{pr:6universality}. But even in that situation (and recall that proving which elements are represented by a ternary form is notoriously hard), the technical details would be much more tedious for two reasons: 1) The local obstructions for representation by $E_3^{(5)}$ come from two primes, $\sqrt2$ and $3$, and thus they are more complicated than the conditions for $E_3^{(6)}$. 2) The local obstructions are not enough; there are at least twelve elements that are represented locally but not globally. This would require extra care if we wanted to prove the analogue of Lemma \ref{le:diagsublat}\ref*{it:diagsublat2}.
		
		We computed the following: \emph{Let $\alpha$ be locally represented by $E_3^{(5)}$ and $\NN(\alpha) \leq 200\,000$. Then $\alpha\to E_3^{(5)}$ if and only if $\alpha$ is not among the nine exceptions $7\pm3\sqrt2$, $3(3\pm\sqrt2)$, $9(7\pm3\sqrt2)$, $51$ and $27(3\pm\sqrt2)$.}
		
		The largest norm in this list of exceptions is $5103$. This might inspire some confidence that this list is complete. However, it turns out that there are at least three other exceptional elements, namely $9^2(7\pm3\sqrt2)$ and $9\cdot 51$, with norms $203\,391$ and $210\,681$, respectively. 
		
		Of course in hindsight one sees that $9$ times an exception may still very well be an exception, but just looking at the norms of the exceptions it is easy to overlook this. We present this as a warning against blindly trusting to numerical evidence.
	\end{remark}
	
	For all the reasons listed in the remark above, we chose the following approach: Instead of making one conjecture about the ternary escalator $E_3^{(5)}$, we will explicitly assume universality of some of the quaternary escalators. To make this conjecture immediately understandable to anyone who has not read the whole section, we list the lattices explicitly. We also use this opportunity and present them in a nicer form than the one given in Proposition \ref{pr:5summary} by partially diagonalising some of the lattices -- note that there are only two $3 \times 3$ matrices in the formulation.
	
	\begin{conjecture} \label{co:13lattices}
		Consider the quaternary lattices
		\begin{align*}
			&E_{4,(5)}^{(1)}\simeq\qf{1,2+\sqrt2,3,3(2+\sqrt2)},& \quad & E_{4,(5)}^{(2)}\simeq\qf{1,2+\sqrt2} \perp\bigl\langle\begin{smallmatrix}
				3 & 1 \\
				1 & 3(2+\sqrt2)  \\
			\end{smallmatrix}\bigr\rangle, \\ 
			&E_{4,(5)}^{(4)}\simeq\qf{1,2+\sqrt2} \perp\bigl\langle\begin{smallmatrix}
				3 & \sqrt2 \\
				\sqrt2 & 3(2+\sqrt2)  \\
			\end{smallmatrix}\bigr\rangle,
			&\quad & E_{4,(5)}^{(7)}\simeq\qf{1,2+\sqrt2} \perp\bigl\langle\begin{smallmatrix}
				3 & 1+\sqrt2 \\
				1+\sqrt2 & 3+\sqrt2  \\
			\end{smallmatrix}\bigr\rangle,\\
			&E_{4,(5)}^{(8)}\simeq\qf{1,2+\sqrt2} \perp\bigl\langle\begin{smallmatrix}
				3 & \sqrt2 \\
				\sqrt2 & 3+\sqrt2  \\
			\end{smallmatrix}\bigr\rangle,
			&\quad & E_{4,(5)}^{(10)}\simeq\qf{1,3} \perp\bigl\langle\begin{smallmatrix}
				2+\sqrt2 & 1 \\
				1 & 3(2-\sqrt2)  \\
			\end{smallmatrix}\bigr\rangle,\\
			&E_{4,(5)}^{(11)}\simeq\qf{1} \perp\Bigl\langle\begin{smallmatrix}
				2+\sqrt2 & 0 & 1+\sqrt2 \\
				0 & 3 & 1 \\
				1+\sqrt2 & 1 & 3(2+\sqrt2)  \\
			\end{smallmatrix}\Bigr\rangle, & \quad &
			E_{4,(5)}^{(13)}\simeq\qf{1} \perp\Bigl\langle\begin{smallmatrix}
				2+\sqrt2 & 0 & 1+\sqrt2 \\
				0 & 3 & \sqrt2 \\
				1+\sqrt2 & \sqrt2 & 3(2+\sqrt2)  \\
			\end{smallmatrix}\Bigr\rangle,\\
			&E_{4,(5)}^{(19)}\simeq\qf{1,2+\sqrt2,3,2(2+\sqrt2)},& \quad &
			E_{4,(5)}^{(20)}\simeq\qf{1,2+\sqrt2} \perp\bigl\langle\begin{smallmatrix}
				3 & 1 \\
				1 & 2(2+\sqrt2)  \\
			\end{smallmatrix}\bigr\rangle,\\
			&E_{4,(5)}^{(22)}\simeq\qf{1,2+\sqrt2} \perp\bigl\langle\begin{smallmatrix}
				3 & \sqrt2 \\
				\sqrt2 & 2(2+\sqrt2)  \\
			\end{smallmatrix}\bigr\rangle,& \quad &
			E_{4,(5)}^{(41)}\simeq\qf{1,2+\sqrt2,3,3+\sqrt2},\\
			&E_{4,(5)}^{(42)}\simeq\qf{1,2+\sqrt2} \perp\bigl\langle\begin{smallmatrix}
				3 & 1 \\
				1 & 3+\sqrt2  \\
			\end{smallmatrix}\bigr\rangle.    
		\end{align*}
		Then:
		\begin{enumerate}
			\item $E_{4,(5)}^{(2)}$ represents all $\alpha \in \Z[\sqrt2]^+$ except for $3(3-\sqrt2) \ve^{2k}$, $k \in \Z$. \label{it:co1}
			\item $E_{4,(5)}^{(i)}$ is universal for $i = 1, 4, 7, 8, 10, 11, 13, 19, 20, 22, 41, 42$.
		\end{enumerate}
	\end{conjecture} 
	
	Note that thanks to the following lemma, it is enough to test only these $13$ lattices up to our bound $\boundII{}$ in order to prove Proposition \ref{pr:5summary}, instead of all $38=53-15$ escalations of $E_3^{(5)}$. It is also worth mentioning that we could have ignored the escalators $E_{4,(5)}^{(24)}$ and $E_{4,(5)}^{(39)}$, as they are isometric to $E_{4,(6)}^{(29)}$ and $E_{4,(6)}^{(28)}$, respectively. Therefore, there are only $30+38-2 = 66$ quaternary escalators up to isometry ($64$ of them probably universal).
	
	\begin{lemma} \label{le:conjecturesuffices}
		Assuming Conjecture \ref{co:13lattices}, all escalations of $E_3^{(5)}$ are universal except for $E_{4,(5)}^{(2)}$ and $E_{4,(5)}^{(3)}$, both of which fail to represent exactly one element of $\Z[\sqrt2]^+/\U_{\qq{2}}^2$.
	\end{lemma}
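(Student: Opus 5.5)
The plan is to reduce every one of the $38$ escalations of $E_3^{(5)}$ (Proposition \ref{pr:5summary}) to one of the $13$ lattices in Conjecture \ref{co:13lattices}. The two reductions are by \emph{isometry} and by \emph{containment}. If $E_{4,(5)}^{(i)}$ contains a sublattice isometric to a lattice that is universal under the conjecture, then $E_{4,(5)}^{(i)}$ is universal. The containment may be of full rank, i.e.\ the conjectured universal lattice may be a finite-index sublattice of $E_{4,(5)}^{(i)}$, or it may be of smaller rank, e.g.\ one of the universal ternary escalators $E_3^{(j)}$, $j\le 4$.

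First I handle the conjugate pair. $E_{4,(5)}^{(3)}$ is the conjugate of $E_{4,(5)}^{(2)}$, by Observation \ref{ob:conjugate} applied to the Gram matrices, since $(0,0,\ve)$ and $(0,0,1)$ correspond up to conjugation and a unit change of basis. Conjugation maps represented elements to represented elements, and $E_3^{(5)}$ is self-conjugate. So part \ref*{it:co1} of the conjecture gives that $E_{4,(5)}^{(3)}$ represents everything except $3(3+\sqrt2)$ times squares of units. Thus each of the two lattices misses exactly one class in $\Z[\sqrt2]^+/\U_{\qq{2}}^2$.

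Second, conjugation also handles many of the remaining lattices. If some $E_{4,(5)}^{(i)}$ is isometric to the conjugate of one of the $12$ conjecturally universal lattices, then it is universal, because the conjugate of a universal lattice is universal. I expect a sizeable part of the list \eqref{eq:list5} to pair up this way, using that $b$ and $c$ can be replaced by conjugate values up to units. For the rest, I compute for each remaining $i$ an explicit change of generators, as in Example \ref{ex:rankdidntgrow}. The goal is to exhibit either a sublattice of $E_{4,(5)}^{(i)}$ isometric to one of the $12$ listed lattices (or their conjugates), or a ternary sublattice isometric to some $E_3^{(j)}$ with $j\le 4$. The latter is universal by \cite{CKR}, as in Proposition \ref{pr:ternaryesc}. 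Two further lattices, $E_{4,(5)}^{(24)}$ and $E_{4,(5)}^{(39)}$, are isometric to escalations of $E_3^{(6)}$ and are therefore universal unconditionally by Proposition \ref{pr:6universality}.

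The main obstacle is the bookkeeping: finding, for each of the roughly $25$ leftover lattices, a concrete vector combination realising the containment. For this I would search on a computer for short vectors of $E_{4,(5)}^{(i)}$ whose Gram matrix matches a target lattice. The natural first attempt is to look for vectors of norm $1$, $2+\sqrt2$ and $3$ together with a fourth vector with the prescribed inner products. If some lattice resists every such reduction, it has to be added to the conjecture. The statement implicitly claims this never happens, because the $13$ lattices were presumably chosen exactly as the irreducible representatives.
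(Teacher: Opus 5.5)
Your strategy is the same as the paper's. After the isometry reduction of Proposition \ref{pr:5summary}, every class is handled in one of three ways: by conjugation, by a universal ternary sublattice $E_3^{(j)}$ with $j\le 4$, or by containing a lattice assumed universal. Your treatment of $E_{4,(5)}^{(2)}$ and $E_{4,(5)}^{(3)}$ is correct, and so is your alternative handling of $E_{4,(5)}^{(24)}$ and $E_{4,(5)}^{(39)}$ via Proposition \ref{pr:6universality}.

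The gap is that you never carry out the case check, and for this lemma the case check \emph{is} the proof. The content of the statement is that these particular twelve lattices cover all $36$ non-exceptional classes. You defer this to a computer search and concede that some class might resist every reduction. You then rule that out because the $13$ lattices were \enquote{presumably chosen} to make it work, which assumes the conclusion.

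What is missing is an explicit reason for each class, and the paper supplies one.
\begin{itemize}
\item Fourteen classes are universal unconditionally because they contain a universal ternary escalator: $E_3^{(1)}$ for $i=24,38,39$; $E_3^{(2)}$ for $i=9$; $E_3^{(3)}$ for $i=16,17,26,47,48$; and $E_3^{(4)}$ for $i=15,27,34,36,37$.
\item Eighteen more form the conjugate pairs $(4,5)$, $(7,6)$, $(8,32)$, $(11,12)$, $(13,14)$, $(20,21)$, $(22,23)$, $(41,28)$, $(42,30)$. In each pair the first member is listed in Conjecture \ref{co:13lattices}.
\item The last four, $i=1,10,19,25$, are self-conjugate, so conjugation gives nothing for them. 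The first three are assumed universal directly. For the fourth, $E_{4,(5)}^{(25)}\simeq\qf{1,2+\sqrt2,3,2+\sqrt2}$ contains $E_{4,(5)}^{(19)}\simeq\qf{1,2+\sqrt2,3,2(2+\sqrt2)}$: replace the last basis vector by $\sqrt2$ times it.
\end{itemize}
That last step is the only full-rank containment needed, so the elaborate search for four-vector configurations you describe is largely unnecessary. With this table in place your argument is complete; without it, it only describes how one would look for a proof.
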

	\begin{proof}
		We need to handle all escalators $E_{4,(5)}^{(i)}$ for $1 \leq i \leq 53$. Remember that $i=18$, $29$, $31$, $33$, $35$, $40$, $43$--$46$ and $49$--$53$ can be ignored by Proposition \ref{pr:5summary}, since they occur in the list repeatedly. For $i=2$, the conjecture directly yields the statement, and for $i=3$ also, since the lattices $E_{4,(5)}^{(2)}$ and $E_{4,(5)}^{(3)}$ are conjugate.
		
		Many lattices can be proved to be universal unconditionally, since they contain one of the universal ternary lattices $E_3^{(j)}$, $1\leq j \leq 4$. In particular, the universal lattice with $j=1$ occurs as a sublattice for $i = 24$, $38$, $39$; the sublattice with $j=2$ can be found in $i=9$; the sublattice $j=3$ solves $i=16$, $17$, $26$, $47$, $48$; and $j=4$ solves $i=15$, $27$, $34$, $36$, $37$.
		
		So from the $38-2$ isometry classes of escalators for which we need to prove universality, we successfully handled $14$. For further reduction, we observe that many of the remaining $22$ escalators form conjugate pairs, so it is enough to explicitly assume universality for only one of them. The lattices with $i=1$, $10$, $19$ and $25$ are self-conjugate, but for the others, the conjugate pairs are $(4,5)$, $(7,6)$, $(8,32)$, $(11,12)$, $(13,14)$, $(20,21)$, $(22,23)$, $(41,28)$ and $(42,30)$; we ordered them in such a way that the first number is listed in Conjecture \ref{co:13lattices}, so the universality of the other follows.
		
		From the self-conjugate lattices with $i=1$, $10$, $19$ and $25$, the first three are also explicitly assumed universal in the conjecture. Universality of the last one follows from the fact that $E_{4,(5)}^{(25)} \simeq \qf{1,2+\sqrt2,3,2+\sqrt2}$ contains $E_{4,(5)}^{(19)} \simeq \qf{1,2+\sqrt2,3,2(2+\sqrt2)}$, which is assumed to be universal.
	\end{proof}

	\section{Universality criterion set for \texorpdfstring{$\qq{3}$}{Q(√3)}} \label{se:sqrt3}
	
	As before, \emph{throughout the section, all lattices are classical}. This section studies the classical criterion set over $\qq{3}$. The route by which we arrive at our conclusions is somewhat different from the previous section where we applied the escalation procedure very straightforwardly; but before discussing the method, let us present our conjectural results.
	
	\begin{conjecture}\label{co:sqrt3}
		Denote $\ve = 2+\sqrt3$. We have
		\[
		\Ccl_{\qq{3}} = \Cdiag_{\qq{3}} = \bigl\{1, \ve, 4\pm\sqrt3, \ve(4\pm\sqrt3), 7\pm2\sqrt3, \ve(7\pm2\sqrt3)\bigr\}.
		\]
	\end{conjecture}
	
	As before, we state how much we can actually prove.
	
	\begin{theorem} \label{th:sqrt3}
		Let $\CC = \bigl\{1, \ve, 4\pm\sqrt3, \ve(4\pm\sqrt3), 7\pm2\sqrt3, \ve(7\pm2\sqrt3)\bigr\}$ where $\ve=2+\sqrt3$. Then:
		\begin{enumerate}
			\item $\CC \subset \Cdiag_{\qq{3}}\subset\Ccl_{\qq{3}}$.
			\item $\Ccl_{\qq{3}}$ contains no other element with norm below $\boundIII{}$.
			\item $\CC$ is the $S$-criterion set for $S = \{\alpha \in \Z[\sqrt3]^{+} \mid \NN(\alpha) \leq \boundIII{}\}$.
			\item Conjecture \ref{co:sqrt3} holds if Conjectures \ref{co:sqrt3ternary} and \ref{co:5lattices} do, i.e.\ if we assume the knowledge of all elements not represented by one ternary and five quaternary lattices.
		\end{enumerate}
	\end{theorem}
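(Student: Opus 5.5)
The proof has the same shape as that of Theorem \ref{th:sqrt2main}. For part (1), the inclusion $\Cdiag_{\qq{3}}\subset\Ccl_{\qq{3}}$ is Theorem \ref{th:KKRunique}(c). For $\CC\subset\Cdiag_{\qq{3}}$, Proposition \ref{pr:KKRcritical} says it suffices to exhibit, for each $\alpha\in\CC$, a diagonal form whose truant is $\alpha$. The closure of $\Cdiag_K$ under multiplication by totally positive units and under conjugation \cite[Cor.~3.5]{KKR} cuts the work down to three elements: $1$, $4+\sqrt3$ and $7+2\sqrt3$. The element $\ve$ follows from $1$, and the others are obtained from these by conjugation and multiplication by $\ve$.

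Concretely, I would follow the diagonal pseudoescalation chain of Proposition \ref{pr:diagonalpractical}.
\begin{enumerate}
\item The truant of $\qf{}$ is $1$.
\item The truant of $\qf{1}$ is $\ve$. One can also argue directly: $\ve$ is a squarefree indecomposable, hence critical by \cite[Thm.~4.2]{KKR}.
\item Compute the truant of $\qf{1,\ve}$. I expect it to be $4\pm\sqrt3$ (norm $13$), after checking that $2$ and $3$, of norms $4$ and $9$, are represented, and that the remaining elements of norm below $13$ are also represented.
\item Pass to a ternary form such as $\qf{1,\ve,\gamma}$ with $\gamma$ chosen from $N_{4+\sqrt3}$, for instance $\gamma=2$ or $\gamma=3$, and search for one whose truant is $7+2\sqrt3$ (norm $37$).
\end{enumerate}
Each truant computation is a finite check over the elements of smaller norm, listed modulo $\U^2$, together with a check that the claimed truant itself is not represented. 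For non-representation, the Cauchy--Schwarz bound $x^2\preceq\alpha$ limits the candidate coefficient vectors, so a short enumeration settles it.

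For parts (2) and (3), I would run Algorithm \ref{al:escalation} for classical lattices with a norm cutoff of $\boundIII{}$, computing all classical escalators as in Section \ref{se:sqrt2}. This means enumerating Gram matrices using the bounds $b_i^2\preceq Q(\ee_i)\alpha$ and reducing modulo isometry and conjugation. The goal is to show that every escalator either has its truant in $\CC$ or represents all elements of norm at most $\boundIII{}$. Part (3) then follows exactly as in Theorem \ref{th:sqrt2main}: $\Ccl_S$ coincides with the set of critical elements of norm at most the bound, which is $\CC$, and Theorem \ref{th:KKRunique}(b) gives the criterion property.

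The main obstacle is the size of the escalation tree. Many quaternary escalators arise, and each one has to be verified numerically up to norm $\boundIII{}$. To keep this manageable I would apply the shortcuts already used for $\qq{2}$: detect universal ternary sublattices (using \cite{CKR}-type results or class number $1$), group lattices into conjugate pairs, and prune isometric duplicates. Part (4) would then mirror Lemma \ref{le:conjecturesuffices}. I would show that every escalator not covered unconditionally either contains one of the lattices in Conjectures \ref{co:sqrt3ternary} and \ref{co:5lattices} or is conjugate to such a lattice, and that, under the assumed knowledge of what those lattices represent, every further escalation is universal.
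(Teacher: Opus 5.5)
Your reduction of part (1) to three diagonal forms, with truants $1$, $4+\sqrt3$ and $7+2\sqrt3$, matches the paper's. Your step 4, however, cannot succeed.

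\textbf{Part (1), step 4.} A form with truant of norm $37$ must represent all four classes of norm $13$, namely $4\pm\sqrt3$ and $\ve(4\pm\sqrt3)$.
The ternary diagonal forms representing $1,\ve,4+\sqrt3$ are $\qf{1,\ve,\gamma}$ with $\gamma\in N_{4+\sqrt3}=\{1,2,\ve,1+\ve,2+\ve\}$. Your $\gamma=3$ is not allowed, since $3\not\preceq 4+\sqrt3$. The choices $\gamma=1,\ve$ give universal forms. The other three still miss elements of norm $13$; for example, $\qf{1,\ve,2}$ misses $5\pm2\sqrt3$ by Proposition \ref{pr:regularternary}.
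Even rank $4$ is not enough. Apart from forms containing the universal $\qf{1,\ve,1}$ or $\qf{1,\ve,\ve}$, the two coefficients besides $1,\ve$ must meet $N_\nu$ for each of the four $\nu$ of norm $13$. This forces $\qf{1,\ve,2,2\ve}$ or $\qf{1,\ve,1+\ve,1+\overline{\ve}}$, and both represent every element of norm $37$; for instance $7+2\sqrt3=1+2+2\ve=1+(3-\sqrt3)(1+\sqrt3)^2$.
The paper needs the quinary form $\qf{1,\ve,3+\sqrt3,\ve(4+\sqrt3),4-\sqrt3}$. Below norm $37$, its subform $\qf{1,\ve,1+\ve}$ misses only the classes of $4-\sqrt3$ and $\ve(4+\sqrt3)$, and these are added. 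Meanwhile $7+2\sqrt3=3+2\ve$ stays unrepresented.

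\textbf{Part (4).} Here the gap is more serious: the key idea is missing. Mirroring Lemma \ref{le:conjecturesuffices} would require every escalator not handled unconditionally to contain one of the six conjectured lattices, and that is false.
Take $\qf{1,\ve,2,1+2\ve,1+2\overline{\ve}}$. It is an escalator, obtained through $\qf{1,\ve,2}$, where $2+\ve$ is represented by $\ee_2+\ee_3$. It contains no universal ternary lattice, neither $\qf{1,\ve,1+\ve}$ nor its conjugate, and no lattice $\qf{1,\ve}\perp J$ as in Conjecture \ref{co:5lattices}, because no vector orthogonal to $\ee_1,\ee_2$ has norm $2+\ve$. So the whole subtree below $\qf{1,\ve,2}$ is untouched by the conjectures.
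Moreover, the lattices of Conjecture \ref{co:5lattices} may miss up to nine elements, with norms up to $169$, not only their truant. So their escalations are not automatically universal, unlike in Section \ref{se:sqrt2}.

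\textbf{How the paper avoids this.} Lemma \ref{le:technical} removes all such enumeration. Suppose $M$ represents $1,\ve,2+\ve,2+\overline{\ve}$ and contains neither $\qf{1,\ve,1+\ve}$ nor its conjugate. Then $M$ is universal, or contains $\qf{1,\ve,2}$, or contains a lattice of Conjecture \ref{co:5lattices} or its conjugate. Hence every element it misses is $\equiv\pm(1+2\sqrt3)\pmod4$ or lies in the finite set $R$. One checks that no $\alpha$ has both $\alpha$ and $\ve\alpha$ in this failure set.
Apply this to both $(L,Q)$ and $(L,\ve Q)$; both represent the six elements of norms $1$ and $13$. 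This gives Proposition \ref{pr:sqrt3main} for every lattice $L$ at once. Combined with Lemmas \ref{le:boundHuge} and \ref{le:boundIII}, it also gives (2)--(3) after checking only six lattices.

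Your brute-force escalation up to norm $\boundIII{}$ is sound in principle for (2)--(3), although the paper considers it possibly infeasible. It does not, however, yield (4) with only the one ternary and five quaternary lattices as hypotheses.
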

	\begin{proof}
		Since universality criterion sets are closed under multiplication by totally positive units and under conjugation \cite[Thm.~1.2]{KKR}, it is enough to find diagonal forms with truant $1$, with truant $4+\sqrt3$ and with truant $7+2\sqrt3$. These are $\{\vectorstyle{0}\}$, $\qf{1,\ve}$ and $\qf{1,\ve,3+\sqrt3,\varepsilon(4+\sqrt3),4-\sqrt3}$. This proves the first claim.
		
		It remains to prove that assuming the two conjectures, or considering only elements of norm under $\boundIII{}$, there are no other elements of $\Ccl_{\qq{3}}$. Both facts follow from Proposition \ref{pr:sqrt3main}, which contains even finer information about the criterion set.
	\end{proof}
	
	Note that the 10-element set from Conjecture \ref{co:sqrt3} and Theorem \ref{th:sqrt3} consists exactly of the totally positive elements with norms $1$, $13$ and $37$. Now we state a result which strongly indicates that the six elements with norms $1$ and $13$ in fact form $\Ccl_S$ for $S=\{\alpha\in\Z[\sqrt3]^+ \mid \NN(\alpha)\neq 37\}$. For further discussion of similar phenomena, see Section \ref{se:amax}.
	
	\begin{proposition} \label{pr:sqrt3main}
		Let $L$ be a lattice representing all of $1,\ve,4\pm\sqrt3,\ve(4\pm\sqrt3)$. Then, up to norm $\boundIII{}$, $L$ represents all of $\Z[\sqrt3]^+$ except possibly for $7\pm 2\sqrt3$ and $\ve(7\pm 2\sqrt3)$.
		
		Moreover, assuming Conjectures \ref{co:sqrt3ternary} and \ref{co:5lattices}, we can remove the assumption about norm:  $L$ represents all of $\Z[\sqrt3]^+$ except possibly for $7\pm 2\sqrt3$ and $\ve(7\pm 2\sqrt3)$.
	\end{proposition}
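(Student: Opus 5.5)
The plan is to run the escalation of Section~\ref{se:finite} over $\qq{3}$, stopping each branch as soon as the escalator represents all of $1,\ve,4\pm\sqrt3,\ve(4\pm\sqrt3)$. By Proposition~\ref{pr:subescalators}\ref*{it:2} it suffices to handle escalators. Suppose $L$ represents these six elements and fails to represent some $\alpha$ outside $\{7\pm2\sqrt3,\ve(7\pm2\sqrt3)\}$. Take $S$ to be $\Z[\sqrt3]^+/\U^2$ with those four square classes removed, and order it admissibly. Then $L$ has an $S_\rho$-truant, so $L$ contains an $S_\rho$-escalator $E$ with the same truant. Since $S$ contains all elements of norm $1$ and $13$, every such $E$ arises from $\{\vectorstyle{0}\}$ by escalations whose truants are first $1$, then $\ve$, then the norm-$13$ elements. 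It therefore suffices to enumerate all $S_\rho$-escalators that represent the six given elements and to show that each of them is $S$-universal, unconditionally up to norm $\boundIII{}$ and fully under the two conjectures.

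The concrete steps are as follows. First, $\{\vectorstyle{0}\}\to\qf{1}$. Then $\qf{1}\to\qf{1,\ve}$; this is forced, because $\ve$ is indecomposable and not a square, so the off-diagonal entry vanishes as in the $\qq{2}$ computation. The truant of $\qf{1,\ve}$ is a norm-$13$ element, say $4+\sqrt3$ after applying a unit or conjugation. Next, I would list all Gram matrices $\bigl(\begin{smallmatrix}1&0&a\\0&\ve&b\\a&b&4+\sqrt3\end{smallmatrix}\bigr)$ with $a^2\preceq 4+\sqrt3$ and $b^2\preceq\ve(4+\sqrt3)$ that are totally positive semidefinite, and reduce them up to isometry. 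This gives finitely many ternary escalators. I would continue escalating each of them until every remaining norm-$13$ element is represented, using the conjugation and unit symmetries to cut the cases roughly by a factor of four.

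At this stage there will be a finite list of ternary and quaternary lattices representing the six elements. For each, a Magma computation checks representation of all of $S$ up to norm $\boundIII{}$; this yields the first claim. For the unconditional-in-norm part, any escalator that is not $S$-universal must be escalated further, and the computation should show that no such branch survives below $\boundIII{}$.

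For the second claim I would imitate Lemma~\ref{le:diagsublat} and the proof of Proposition~\ref{pr:6universality}. I expect most of the quaternary escalators either to contain a known universal ternary sublattice or to have class number $1$. The rest should contain a sublattice $T\perp\qf{\gamma}$, where $T$ is a ternary escalator whose represented set is given by Conjecture~\ref{co:sqrt3ternary}. Using the Hirzebruch--Jung decomposition of $\alpha$ into two consecutive indecomposables of $\Z[\sqrt3]$, I would show that $\alpha-\gamma\omega^2$ is totally nonnegative and avoids the exceptional set for a suitable unit $\omega$ once $\NN(\alpha)$ exceeds an explicit bound; a finite check then finishes. The five quaternary lattices that resist this treatment are exactly those covered by Conjecture~\ref{co:5lattices}.

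The main obstacle is this last reduction. The ternary lattice may have genuine non-local exceptions, as in Remark~\ref{re:l-gfails}, so the subtraction argument must keep track of the exceptional classes and not only of the local congruence conditions. I would first try to choose $\gamma$ so that it lies in a residue class that makes $\alpha-\gamma$ avoid all of them.
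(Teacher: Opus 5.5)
\paragraph{The gap.} Your reduction to escalators is sound in principle: with $S$ the complement of the four norm-$37$ classes, Proposition~\ref{pr:subescalators}\ref*{it:2} gives an escalator inside $L$ that still represents the six elements. But the plan does not reach the statement, and its structural predictions are wrong.

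The lattices representing all six elements are not just ternary and quaternary. The five lattices $\qf{1,\ve}\perp J$ of Conjecture~\ref{co:5lattices} have truants $1+2\ve=5+2\sqrt3$ and $1+2\overline{\ve}=5-2\sqrt3$. Also, $\qf{1,\ve,2}$ misses the whole class $\pm(1+2\sqrt3)\pmod{4}$, which contains both of these. So the escalators you would have to certify include many further escalations of rank up to $6$; a direct escalation over $\qq{3}$ produces hundreds or thousands of escalators.

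In particular, Conjecture~\ref{co:5lattices} is not about quaternary escalators that merely \enquote{resist} a subtraction argument. Those five lattices are not $S$-universal at all, and their exceptional set $R$ also contains elements of norms $52$, $78$, $117$ and $169$.

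Your $T\perp\qf{\gamma}$ step with $T$ taken from Conjecture~\ref{co:sqrt3ternary} is vacuous. Any lattice containing $\qf{1,\ve,1+\ve}$ or its conjugate is finished directly by that conjecture, since its only exceptions have norms $13$ and $37$.

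The real work concerns lattices containing none of $\qf{1,\ve,1}$, $\qf{1,\ve,\ve}$, $\qf{1,\ve,1+\ve}$, $\qf{1,\ve,1+\overline{\ve}}$. By analysing the sublattice spanned by vectors representing $1,\ve,4\pm\sqrt3$, such a lattice contains $\qf{1,\ve,2}$ or one of the $J$-lattices. You must then show that representing $5\pm2\sqrt3$ forces representation of all the \emph{other} exceptions of that sublattice. Your proposal has no argument for this. So, assuming only Conjectures~\ref{co:sqrt3ternary} and~\ref{co:5lattices}, it settles nothing beyond the branch containing $\qf{1,\ve,1+\ve}$; you would need extra assumptions on, or a separate analysis of, all the higher-rank escalators.

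Also, an escalator that represents the six elements but is not $S$-universal is already a counterexample, so \enquote{escalating it further} makes no sense.

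\paragraph{The missing idea: $\ve$-scaling.} Modulo $\U_{\qq{3}}^2$, multiplication by $\ve$ preserves $\{1,\ve\}$ and swaps $\{4\pm\sqrt3\}$ with $\{5\mp2\sqrt3\}$. Hence $(L,\ve Q)$ satisfies exactly the same hypotheses as $(L,Q)$.

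Lemma~\ref{le:technical} uses only the sublattice generated by vectors representing $1,\ve,4\pm\sqrt3$, via Lemma~\ref{le:2+eps2+bareps}, Proposition~\ref{pr:regularternary} and Conjecture~\ref{co:5lattices}. It shows that a lattice $M$ representing $1,\ve,4\pm\sqrt3$ and containing neither $\qf{1,\ve,1+\ve}$ nor its conjugate misses only elements of $X=R\cup\{\beta : \beta\equiv\pm(1+2\sqrt3)\pmod{4}\}$.

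Apply this to $M_1=(L,Q)$ and $M_2=(L,\ve Q)$. A non-represented $\alpha$ would then satisfy both $\alpha\in X$ and $\ve\alpha\in X$, and a direct check shows this is impossible. As a result, the inner products between vectors representing $4\pm\sqrt3$ and those representing $5\pm2\sqrt3$ never matter, and no rank-$5$ or rank-$6$ escalator is ever enumerated. The bounded-norm claim then needs computer verification of only one ternary and five quaternary lattices up to norm $\boundIII{}$.
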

	
	Clearly, Proposition \ref{pr:sqrt3main} implies Theorem \ref{th:sqrt3}. The rest of the section is spent by proving the Proposition.
	
	\medskip
	
	Performing the whole escalation procedure without any adjustments (as we did in the previous section) would run into serious problems -- there are simply too many escalators, and we would have to use many statements like Proposition \ref{pr:quinaryescalators} but with much more complicated wording: For a given lattice $\dots$, there are so-and-so-many escalators, and they all have such-and-such property in common. It would be tedious and hard to follow, and moreover, our conditional result would probably have to rely on a conjecture about several hundreds or even thousands of escalators. Instead, we found a way how to compute $\Ccl_{\qq{3}}$ almost by hand by cleverly exploiting the symmetries -- not only conjugation but also multiplication by $\ve$. 
	
	We start by performing the first three steps of the escalation. The zero lattice $\{\vectorstyle{0}\}$ has two truants, namely $1$ and $\ve=2+\sqrt3$. Choosing an ordering with $1 <_\rho \ve$, the only escalation is $\qf{1}$ with unique truant $\ve$, and then the only escalation is $\qf{1,\ve}$. (Indeed, there is no nonzero $a^2 \preceq \ve$, since $\ve$ is indecomposable and not a square.)
	
	The binary escalator $L_2\simeq\qf{1,\ve}$ already represents all elements with norm less than $13$, but it fails to represent all the elements with norm exactly $13$, which we shall write as $2+\ve$, $2+\overline{\ve}$, $2\ve+1$ and $2\overline{\ve}+1$. This is connected with the fact that $L_2$ is very symmetric -- it is isometric to its $\ve$-multiple $(L_2,\ve Q)$ as well as to its conjugate $(L_2,\overline{Q})$.
	
	Thus, $L_2$ has four distinct truants. Let us make one more escalation step, assuming that $2+\ve=4+\sqrt3$ is the smallest of them with respect to $\rho$. Let $\ee_1$ and $\ee_2$ be the standard basis of $L_2$, and let $\vv$ be a vector such that $Q(\vv)=2+\ve$. The corresponding Gram matrix is
	\[
	\begin{pmatrix}
		1 & 0 & a  \\
		0 & \ve & b \\
		a & b & 2+\ve
	\end{pmatrix} \text{ for some $a,b \in \Z[\sqrt3]$}.
	\]
	However, we can replace $\vv$ by the vector $\ww=\vv-a\ee_1-\overline{\ve}b\ee_2$, and we see that the lattice generated by $\ee_1,\ee_2,\vv$ is the same as the lattice generated by $\ee_1,\ee_2,\ww$, which is isometric to $\qf{1,\ve,2+\ve-a^2-\overline{\ve} b^2}$. Since $Q(\ww) \succeq 0$, we easily compute that there are the following possibilities for $Q(\ww)$, and each of them indeed corresponds to some values of $a,b$: $1, 2, \ve, 1+\ve, 2+\ve$.
	
	Let us summarise what we obtained now:
	
	\begin{lemma} \label{le:ternaryescalators}
		There are five possible lattices generated by vectors representing $1$, $\ve$ and $2+\ve$. Of them, two are universal, namely $\qf{1,\ve,1}$ and $\qf{1,\ve,\ve}$. The remaining three are $\qf{1,\ve,1+\ve}$, $\qf{1,\ve,2}$ and $\qf{1,\ve,2+\ve}$.
	\end{lemma}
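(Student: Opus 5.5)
The proof has three parts. First enumerate the possible values of $Q(\ww)$. Then identify the resulting lattices. Finally decide which of them are universal.

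\emph{Enumeration.} The reduction before the lemma shows that every such lattice is $\qf{1,\ve,\delta}$ with $\delta = 2+\ve-a^2-\overline{\ve}b^2$. Positive definiteness forces $\delta\succ 0$, since if $\delta=0$ then $\ww=\vectorstyle{0}$ and $\vv$ lies in $L_2$, which does not represent $2+\ve$. So $a^2\preceq 2+\ve$ and $\overline{\ve}b^2\preceq 2+\ve$, which leaves finitely many pairs. We have $2+\ve = 4+\sqrt3$, whose conjugate is $4-\sqrt3\approx 2.27$. Since $a^2$ is a square in $\Z[\sqrt3]$ with both conjugates at most $5.73$ and $2.27$, we get $a^2\in\{0,1\}$. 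Any other nonzero square, such as $\ve$ or $3$, is either not a square or too big. Similarly, $\overline{\ve}b^2=(\overline{\ve}b)^2\ve$ ranges over $\ve u^2$, which gives $\overline{\ve}b^2\in\{0,1,\ve\}$, up to the constraint $\overline{\ve}b^2\preceq 4+\sqrt3$. For example, $b=1$ gives $\overline{\ve}$, which is equivalent to $1\cdot\overline{\ve}$, and I would double-check this case against the listed outputs. Subtracting these values from $2+\ve$ yields exactly $\delta\in\{1,2,\ve,1+\ve,2+\ve\}$. Each value is realized by some $(a,b)$, so all five lattices occur.

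\emph{Universality of $\qf{1,\ve,1}$ and $\qf{1,1,\ve}$.} These are the two listed lattices, and the second is isometric to $\qf{1,\ve,\ve}$ after scaling by $\ve$ and using the unit symmetry. Indeed, $\qf{1,\ve,\ve}$ scaled by $\ve$ equals $\qf{\ve,\ve^2,\ve^2}\simeq\qf{\ve,1,1}$, so universality of one is equivalent to universality of the other, because $\ve\OKPlus=\OKPlus$. For universality itself I would cite the known classification of universal ternary forms over $\qq{3}$ (e.g.\ \cite{CKR}). Alternatively, I would verify that $\qf{1,1,\ve}$ has class number $1$ via mass and automorphisms, and check local universality. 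At non-dyadic primes it is unimodular ternary, hence universal. At the dyadic prime $1+\sqrt3$, the local computation is the main obstacle; I expect it to succeed because the form $x^2+y^2$ already represents many dyadic classes and $\ve z^2$ fills the rest.

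\emph{Non-universality of the other three.} For $\qf{1,\ve,1+\ve}$, $\qf{1,\ve,2}$ and $\qf{1,\ve,2+\ve}$, it suffices to exhibit one non-represented element of norm $13$, found by a short finite search over $\beta\preceq\alpha$. For instance, $2\ve+1$ should fail for $\qf{1,\ve,2}$, and $2+\overline{\ve}$ for $\qf{1,\ve,2+\ve}$. Finally I would check that the five lattices are pairwise non-isometric, for example by comparing determinants $1\cdot\ve\cdot\delta$ modulo squares of units.
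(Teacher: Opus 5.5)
Your route is the paper's: reduce to $\qf{1,\ve,\delta}$ with $\delta=2+\ve-a^2-\overline{\ve}b^2$, list the possible $\delta$, and get universality of the two good lattices from \cite{CKR} (or from class number $1$ plus a local check). The problem is the enumeration, which is the real content of the lemma. Your claim $\overline{\ve}b^2\in\{0,1,\ve\}$ is false. The value $1$ is impossible, since it would need $b^2=\ve=2+\sqrt3$, which is not a square in $\Z[\sqrt3]$. You also miss $b=\pm(1+\sqrt3)$, for which $\overline{\ve}b^2=(2-\sqrt3)(4+2\sqrt3)=2$. Your test case $b=1$ is not even admissible, because $\overline{\ve}$ has conjugate $\ve\approx 3.73>4-\sqrt3$.

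The correct computation goes as follows. For $b=x+y\sqrt3$, the bound $\overline{\ve}b^2\preceq 4+\sqrt3$ gives $|x-y\sqrt3|<0.8$ and $|x+y\sqrt3|<4.7$. Hence $b\in\{0,\pm(1+\sqrt3),\pm\ve\}$ and $\overline{\ve}b^2\in\{0,2,\ve\}$. The separate bounds on $a^2$ and on $\overline{\ve}b^2$ are only necessary, so you must still test each combination for $\delta\succ0$. With the correct sets this discards $a^2=1$, $\overline{\ve}b^2=2$, since $\ve-1=1+\sqrt3$ is not totally positive. The surviving pairs $(a,b)=(0,0),(\pm1,0),(0,\pm(1+\sqrt3)),(0,\pm\ve),(\pm1,\pm\ve)$ give $\delta=2+\ve,\,1+\ve,\,\ve,\,2,\,1$. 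So your final list is right only by accident. In particular, the witness your computation implicitly gives for $\delta=\ve$ (namely $a^2=\overline{\ve}b^2=1$) does not exist; the realizability claim needs the pair $(0,1+\sqrt3)$.

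The rest is fine apart from wording. $\qf{1,\ve,1}$ and $\qf{1,1,\ve}$ are the same lattice, and scaling by $\ve$ is not an isometry. What is true is $(\qf{1,\ve,\ve},\ve Q)\simeq\qf{1,1,\ve}$, and this makes the two universality statements equivalent because $\ve$ is a totally positive unit. That reduction to a single lattice is a small economy over the paper, which simply cites \cite{CKR} for both. Once you cite \cite{CKR}, your unfinished dyadic computation is unnecessary, exactly as in the paper. Your non-representation witnesses are correct: $1+2\ve\equiv 1+2\sqrt3\pmod 4$ fails for $\qf{1,\ve,2}$ (e.g.\ by Proposition~\ref{pr:regularternary}), and $2+\overline{\ve}$ fails for $\qf{1,\ve,2+\ve}$. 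Your determinant argument for pairwise non-isometry is also correct. Both are additions the paper leaves implicit.
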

	\begin{proof}
		In light of the discussion above, it only remains to prove universality of the two escalators. They are precisely the two universal ternary lattices over $\qq{3}$ given in \cite{CKR}. (Also, they both have class number $1$, so it would be enough to check their universality locally.)
	\end{proof}
	
	Now we are left with three non-universal escalators, and we could start going through all their escalations step by step; however, we already hinted that this would be tedious at best, and possibly unfeasible. Instead, based on very strong computational evidence, we conjecture that one of these lattices is already almost universal:
	
	\begin{conjecture} \label{co:sqrt3ternary}
		The quadratic form $\qf{1,\ve,1+\ve}$ represents all of $\Z[\sqrt3]^+$ except for $2+\overline{\ve}$, $1+2\overline{\ve}$, $2+3\ve$ and $3+2\ve$ (with norms $13$, $13$, $37$ and $37$).    
	\end{conjecture}
	
	Since the computations in Magma language for this ternary lattice ran quite fast, we verified the above conjecture up to norm one million:
	
	\begin{lemma} \label{le:boundHuge}
		Conjecture \ref{co:sqrt3ternary} is true for all elements with norm at most $\boundHuge{}$.
	\end{lemma}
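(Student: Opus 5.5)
This is a finite computational verification, and I would organise it so that each step is either a mechanical enumeration or an easy symmetry argument. Write $L=\qf{1,\ve,1+\ve}$.

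\textbf{Step 1: reduce to representatives modulo squares of units.} Representability by $L$ depends only on the class of $\alpha$ in $\Z[\sqrt3]^+/\U_{\qq{3}}^2$. The totally positive units are exactly the powers of $\ve$, and $\ve^2$ is a square of a unit. So every class contains an element $\alpha=x+y\sqrt3$ whose conjugate ratio $\alpha/\overline{\alpha}$ lies in a fixed fundamental interval such as $[\ve^{-2},\ve^{2})$. For each norm $n\le\boundHuge$ this leaves finitely many pairs $(x,y)$ with $x^2-3y^2=n$ and $x$ bounded by roughly $\sqrt{n}\,\ve$. I would enumerate these explicitly, together with the list of all totally positive $\alpha$ of norm at most $\boundHuge$ up to $\U_{\qq{3}}^2$.

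\textbf{Step 2: enumerate the values of $L$.} If $Q(\vv)=\alpha$ with $\vv=(x_1,x_2,x_3)$, then each term satisfies $x_1^2\preceq\alpha$, $\ve x_2^2\preceq\alpha$ and $(1+\ve)x_3^2\preceq\alpha$. It therefore suffices to enumerate all $\vv$ with $\Tr Q(\vv)\le T$, where $T$ bounds the trace of the chosen representatives. This is a finite Fincke--Pohst style enumeration over the rank-$6$ $\Z$-lattice obtained by restricting scalars with the trace form. The result is the set of represented classes up to norm $\boundHuge$.

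\textbf{Step 3: compare.} Compare the two lists from Steps 1 and 2 and check that the only missing classes are $2+\overline{\ve}$, $1+2\overline{\ve}$, $2+3\ve$ and $3+2\ve$. It is also worth confirming directly, by a tiny finite check using the Cauchy--Schwarz bounds above, that these four are genuinely not represented.

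\textbf{Main obstacle: the trace bound.} A representative with small norm can still have large trace, and the enumeration in Step 2 is driven by trace. Choosing the balanced representative, with $\alpha/\overline{\alpha}\in[\ve^{-2},\ve^{2})$, gives $\Tr\alpha\le(\ve+\ve^{-1})\sqrt{\NN\alpha}=4\sqrt{\NN\alpha}$. So the trace is at most about $4000$, and the search is over vectors of the rank-$6$ trace lattice with trace at most $4000$. That search is large but feasible, which is presumably why Magma handled this ternary lattice quickly.

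A smarter alternative to brute force is to use the local conditions: compute the genus of $L$ and verify locally which elements are excluded. Brute force is then needed only for the finitely many locally represented elements, checking that they are globally represented. I would still fall back on the direct enumeration, since class number information is not given for $L$.
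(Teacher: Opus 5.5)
Your proposal is correct and matches the paper, which justifies this lemma only by reporting a Magma verification up to norm $10^6$. Your choice of representatives with $\alpha/\overline{\alpha}\in[\ve^{-2},\ve^{2})$, giving $\Tr\alpha\le 4\sqrt{\NN(\alpha)}$, followed by a trace-bounded enumeration of the values of $\qf{1,\ve,1+\ve}$ and a direct check that the four listed classes are missing, is a sound way to carry out that computation. One aside: the genus shortcut would save nothing, because finitely many exceptions cannot be local obstructions, so the lattice must be locally universal and every class up to the bound still needs a global check.
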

	
	Remember that our aim is to prove Proposition \ref{pr:sqrt3main}; now it is clear that Conjecture \ref{co:sqrt3ternary} and Lemma \ref{le:boundHuge} prove it for all lattices of the form $\qf{1,\ve} \perp L$ where $L$ represents $1+\ve$. (Or $1+\overline{\ve}$, thanks to symmetry.)
	
	As a second preparatory step, let us look at the escalator $\qf{1,\ve,2}$. Fortunately, it has class number $1$, so we know exactly which elements are represented.
	
	\begin{proposition} \label{pr:regularternary}
		Let $\alpha\in\Z[\sqrt3]^+$. Then $\alpha \to \qf{1,\ve,2}$ if and only if $\alpha \not\equiv \pm(1+2\sqrt3) \pmod4$.
	\end{proposition}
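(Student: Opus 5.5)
The plan mirrors the proof of Lemma \ref{le:diagsublat}\ref*{it:diagsublat1}. First, I will verify that $\qf{1,\ve,2}$ has class number $1$ by comparing $\mathrm{mass}$ with $1/\abs{\mathrm{Aut}}$ (in Magma). Then the lattice satisfies the local--global principle, and $\alpha\to\qf{1,\ve,2}$ if and only if $\alpha$ is represented everywhere locally. At the real places this is exactly the condition $\alpha\succ 0$, which is built into $\alpha\in\Z[\sqrt3]^+$.

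The discriminant of $\qf{1,\ve,2}$ is $2\ve$, and $\ve$ is a unit. So at every non-dyadic prime the lattice is unimodular of rank $3$, hence represents every integer there. The only remaining place is the unique dyadic prime $\p=(1+\sqrt3)$, which is ramified ($2=\ve^{-1}(1+\sqrt3)^2$, so $\p^2=(2)$). The heart of the proof is to determine exactly which $\alpha\in\OK[K_\p]$ are represented by $\qf{1,\ve,2}$ over $K_\p$.

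For the dyadic computation I would split by $\p$-valuation. I will use that $\qf{1,\ve}$ already represents a large set: the form $x^2+\ve y^2$ modulo powers of $\p$, combined with Hensel-type lifting in the dyadic setting (an element congruent to a represented value modulo $4\p$ lifts), reduces everything to a finite computation of residues modulo $4=\p^4$, or modulo $4\p$ if needed. Concretely, I will enumerate the values of $x^2+\ve y^2+2z^2$ modulo $4$ for $x,y,z$ running over $\Z[\sqrt3]/4$. The claim is that every class except $\pm(1+2\sqrt3)$ occurs as a value with a vector primitive enough for lifting.

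For elements of higher valuation, I will use that $\alpha\equiv 0\pmod 4$ gives $\alpha/4$ again, so induction on valuation is possible. I also need to check that the excluded classes are never represented. For that I will show that any representation $\alpha=x^2+\ve y^2+2z^2$ with $\alpha\equiv\pm(1+2\sqrt3)$ is impossible modulo $4$, which is a direct finite check since squares modulo $4$ in $\Z[\sqrt3]$ form a small set. Note that these excluded classes are units at $\p$ ($\NN(1+2\sqrt3)=-11$), so no division by squares interferes with them.

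I expect the main obstacle to be the dyadic lifting step: making sure that residue-level representability modulo $4$ or $4\p$ really implies $\p$-adic representability for all nonexcluded classes, including non-unit $\alpha$ (valuations $1,2,3$). I would handle these either by an explicit local Jordan decomposition and O'Meara's dyadic theory (93:28) or, more pragmatically, by a computer check of representation modulo $\p^{k}$ for $k$ large enough to invoke the standard lifting lemma for quadratic forms over dyadic fields. In that same finite check I would confirm that nonunit classes are always represented.
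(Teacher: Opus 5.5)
Your overall route matches the paper's. Class number $1$ gives the local--global principle. The determinant $2\ve=(1+\sqrt3)^2$ makes the lattice unimodular ternary, hence locally universal, away from $\p=(1+\sqrt3)$. Everything then reduces to a dyadic computation, which the paper does not write out.

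Your mod-$4$ argument that $\pm(1+2\sqrt3)$ are never represented is correct: squares modulo $4$ are $0,1,3,2\sqrt3$, and a $\sqrt3$-coefficient $\equiv 2$ forces an even rational part.

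Your plan for showing that the other elements \emph{are} represented has a concrete hole. The reduction $\alpha\mapsto\alpha/4$ for valuation $\geq 4$ fails exactly for $\alpha=4u$ with $u\equiv\pm(1+2\sqrt3)\pmod 4$. Such $\alpha$ is $\equiv 0\pmod 4$, so the statement asserts it is represented, yet $\alpha/4=u$ is not. For instance $20+8\sqrt3=4(5+2\sqrt3)$ equals $\ve(1+\sqrt3)^2+2(\sqrt3)^2$, although $5+2\sqrt3=1+2\ve$ is not represented. Since your explicit check stops at valuation $3$, these elements are never handled.

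The simplest repair is to replace the first two basis vectors by their $(1+\sqrt3)$-multiples. This gives the sublattice $\qf{2\ve,2\ve^2,2}$, which is $\qf{\ve,\ve^2,1}\simeq\qf{1,\ve,1}$ scaled by $2$. Since $\qf{1,\ve,1}$ is universal (Lemma \ref{le:ternaryescalators}), it is universal over the completion at $\p$. Hence every element of $\p$-valuation $\geq 2$ is locally represented, and only valuations $0$ and $1$ need a residue computation. Alternatively, extend your explicit check through valuation $4$; then $\alpha\mapsto\alpha/4$ only ever lands on non-units and the induction closes.

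For the remaining cases, the lifting really has to be done modulo $4\p$ (local square theorem), as you suspected, not modulo $4$. For example, $3$ and $-1$ agree modulo $4$ but lie in different square classes of $K_\p$. So a table of values modulo $4$ alone does not prove that units are represented.
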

	\begin{proof}
		Since the lattice has class number $1$, it satisfies the local--global principle. The determinant is $2\ve$, so at all non-dyadic places, it is ternary unimodular and thus universal. Regarding the dyadic prime $1+\sqrt3$, a computation is needed, but in the end one arrives precisely at one forbidden class, namely $\pm(1+2\sqrt3)$ modulo $(1+\sqrt3)^4$, which is the same as modulo $4$.
	\end{proof}
	
	Instead of computing the escalations of this lattice, we switch to the one ternary escalator which we did not study at all, namely $\qf{1,\ve,2+\ve}$. It still has three truants of norm $13$, namely $2+\overline{\ve}$, $2\ve+1$ and $2\overline{\ve}+1$. Let us pick the truant $2+\overline{\ve}$ and continue with our computations. We shall not compute all the escalations, only those of a quite specific form -- the reason why we can ignore the others will be explained later.
	
	\begin{lemma} \label{le:2+eps2+bareps}
		Let $L$ be a lattice of the form $\qf{1,\ve} \perp J$ where $J\simeq\bigl\langle\begin{smallmatrix} 2+\ve & a \\ a & 2+\overline{\ve} \end{smallmatrix}\bigr\rangle$. This lattice is totally positive definite if and only if $a \in \{0,\pm1,\pm2,\pm3,\pm\sqrt{3},\pm(1+\sqrt3),\pm(1-\sqrt3),\pm2\sqrt3\}$.
		
		The lattices in every $\pm$ pair are isometric. 
		Moreover, if $a=2\sqrt3$, then $\ve\to J$ (in particular, $L$ is universal); and if $a=3$, then $2\to J$. The cases $a=1+\sqrt3$ and $a=1-\sqrt3$ correspond to a pair of conjugate lattices.
	\end{lemma}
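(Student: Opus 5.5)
The plan is to reduce total positive definiteness of $L$ to that of $J$, enumerate the admissible $a$ by bounding both conjugates, and then exhibit explicit vectors or basis changes for the remaining claims.

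Since $L=\qf{1,\ve}\perp J$ and $\qf{1,\ve}$ is totally positive definite, $L$ is totally positive definite if and only if $J$ is. A $2\times2$ Gram matrix with totally positive diagonal entry $2+\ve$ is totally positive definite exactly when its determinant is totally positive. We compute $(2+\ve)(2+\overline{\ve})=4+2\Tr(\ve)+\NN(\ve)=4+8+1=13$, so the condition becomes $a^2\prec 13$, i.e.\ $\abs{a},\abs{\overline a}<\sqrt{13}\approx3.606$ in both embeddings.

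Writing $a=x+y\sqrt3$ with $x,y\in\Z$, both conditions together are equivalent to $\abs{x}+\abs{y}\sqrt3<\sqrt{13}$. We then enumerate by $\abs{y}$:
\begin{itemize}
\item $y=0$ gives $x\in\{0,\pm1,\pm2,\pm3\}$;
\item $\abs{y}=1$ forces $\abs{x}\le1$, giving $\pm\sqrt3,\pm(1+\sqrt3),\pm(1-\sqrt3)$;
\item $\abs{y}=2$ forces $x=0$, since $2\sqrt3\approx3.464<3.606$ but $1+2\sqrt3$ is too large, giving $\pm2\sqrt3$;
\item $\abs{y}\ge3$ is impossible.
\end{itemize}
This is exactly the stated list.

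The remaining claims should each follow from an explicit vector or basis change.
\begin{itemize}
\item For the $\pm$ pairs, replacing the second basis vector of $J$ by its negative changes $a$ to $-a$, so the lattices are isometric.
\item For $a=3$, the vector with coordinates $(1,-1)$ has $Q=(2+\ve)+(2+\overline{\ve})-6=8-6=2$, so $2\to J$.
\item For the conjugate pair, Observation~\ref{ob:conjugate} gives $\overline{L}\simeq\qf{1,\overline{\ve}}\perp\bigl\langle\begin{smallmatrix}2+\overline{\ve}&\overline a\\ \overline a&2+\ve\end{smallmatrix}\bigr\rangle$. Since $\overline{\ve}=\ve\cdot\overline{\ve}^{\,2}$, rescaling a basis vector by the unit $\overline{\ve}$ gives $\qf{1,\overline{\ve}}\simeq\qf{1,\ve}$. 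Swapping the two basis vectors of the binary block returns it to the shape of $J$ with $a$ replaced by $\overline a$. As $\overline{1+\sqrt3}=1-\sqrt3$, the lattices for $a=1\pm\sqrt3$ are conjugate.
\end{itemize}

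The step I expect to need the most care is $a=2\sqrt3$. Here $\det J=13-12=1$, so $J$ is unimodular. The obvious vector $(1,-1)$ gives $8-4\sqrt3=4\overline{\ve}$, which is $\ve$ times the square $(2\overline{\ve})^2$ but not $\ve$ times a unit square. So I would instead search small coordinates $(x,y)\in\Z[\sqrt3]^2$, for instance $y\in\{\pm\overline{\ve},\pm1\}$ with suitable $x$, to find $\vv$ with $Q(\vv)=\ve$ up to a unit square. Once $\ve\to J$, unimodularity lets us split $J\simeq\qf{\ve}\perp\qf{\det J/\ve}=\qf{\ve,\overline{\ve}}$. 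Then $L$ contains $\qf{1,\ve,\ve}$ (again using $\overline{\ve}=\ve\cdot\overline{\ve}^{\,2}$), which is universal by Lemma~\ref{le:ternaryescalators}. If the small search fails, I would fall back on the fact that a unimodular binary lattice of determinant $1$ with this diagonal is determined locally, and use class-number/local arguments to exhibit $\ve$.
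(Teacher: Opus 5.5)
Your argument matches the paper's proof almost step for step. The matching steps are:
\begin{itemize}
\item the reduction to $a^2\prec 13$, with the enumeration via $\abs{x}+\abs{y}\sqrt3<\sqrt{13}$;
\item negating the second basis vector of $J$ for the $\pm$ pairs;
\item the vector $(1,-1)$ for $a=3$;
\item conjugacy via $\qf{1,\ve}\simeq\qf{1,\overline{\ve}}$ plus swapping the basis of the binary block.
\end{itemize}
Your use of the strict inequality is slightly tidier than the paper's semidefinite formulation, since $a^2=13$ cannot hold in either embedding.

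The gap is the case $a=2\sqrt3$, the one claim that needs an actual computation. There you only announce a search and never produce a vector. The class-number fallback is too vague to count as an argument. The paper simply exhibits $(X,Y)=(-1,1+\sqrt3)$, for which
\[
(2+\ve)X^2+4\sqrt3XY+(2+\overline{\ve})Y^2=(4+\sqrt3)-(12+4\sqrt3)+(10+4\sqrt3)=\ve.
\]
Your own suggested search range would also have worked. The vector $(X,Y)=(1-\sqrt3,1)$ gives $\overline{\ve}=\ve\,\overline{\ve}^{\,2}$, so $\ve\cdot(1-\sqrt3,1)=(-1-\sqrt3,2+\sqrt3)$ represents $\ve$.

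Once you have such a $\vv\in J$, the splitting $J\simeq\qf{\ve,\overline{\ve}}$ is unnecessary. The sublattice $\qf{1,\ve}\perp\OK\vv\simeq\qf{1,\ve,\ve}$ of $L$ is already universal by Lemma~\ref{le:ternaryescalators}.
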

	\begin{proof}
		First observe that a switch from $a$ to $-a$ corresponds to multiplying the last basis vector by $-1$; this of course does not change the lattice, implying isometry. Thus we can assume that $a=x+y\sqrt3$ with $x\geq0$.
		
		The $2\times 2$ matrix is totally positive semidefinite if and only if $a^2\preceq (2+\ve)(2+\overline{\ve})=13$. This is equivalent to both embeddings of $a$ being at most $\sqrt{13}$ in absolute value, or equivalently $x+|y|\sqrt3 \leq \sqrt{13}$. One easily lists all pairs $x,y$ satisfying this, which proves the first part of the lemma.
		
		The representation of $\ve$ is readily checked by plugging $X=-1$ and $Y=1+\sqrt3$ into $(2+\ve)X^2+2\cdot2\sqrt3 XY+(2+\overline{\ve})Y^2$; universality follows from universality of $\qf{1,\ve,\ve}$, see Lemma \ref{le:ternaryescalators}. Similarly, the representation of $2$ can be checked by plugging $X=1$ and $Y=-1$ into $(2+\ve)X^2+2\cdot 3XY+(2+\overline{\ve})Y^2$. Finally, the conjugacy of the two lattices follows from $\qf{1,\ve}\simeq\qf{1,\overline{\ve}}$ together with conjugacy of the corresponding $J$'s.
	\end{proof}
	
	Thus, after our reduction there remain six lattices to be studied, two of which form a conjugate pair. It shall turn out that it is enough to conjecture the behaviour of these $6-1$ lattices to conclude the whole proof.
	
	\begin{conjecture} \label{co:5lattices}
		Let $L$ be one of the five lattices of the form $\qf{1,\ve} \perp \bigl\langle\begin{smallmatrix} 2+\ve & a \\ a & 2+\overline{\ve} \end{smallmatrix}\bigr\rangle$ where $a \in \{0,1,2,\sqrt3,1+\sqrt3\}$. Then $L$ represents all of $\Z[\sqrt3]^+$ except possibly for the nine exceptions:
		\[
		5\pm2\sqrt3, 8\pm2\sqrt3, 15\pm7\sqrt3, 15\pm6\sqrt3, 13.
		\]
	\end{conjecture}
	
	Of course it would be very simple to list the precise subset of the nine exceptions for each of the five lattices, but it is unnecessary for our purposes. Let us just note that none of the five lattices represents the first two elements $1+2\ve$ and $1+2\overline{\ve}$, so these are its truants. Also, the listed exceptional elements have norm at most $169$. 
	
	And again we have a computer-based lemma which verifies the conjecture up to a certain bound.
	
	\begin{lemma} \label{le:boundIII}
		Conjecture \ref{co:5lattices} is true for all elements with norm at most $\boundIII{}$.
	\end{lemma}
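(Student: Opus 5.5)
This is a finite computer verification, organised to keep it small and reliable. Let $L$ be one of the five lattices $\qf{1,\ve}\perp J_a$ with $J_a=\bigl\langle\begin{smallmatrix} 2+\ve & a \\ a & 2+\overline{\ve} \end{smallmatrix}\bigr\rangle$ and $a\in\{0,1,2,\sqrt3,1+\sqrt3\}$. The job is to check that every $\alpha\in\Z[\sqrt3]^+$ with $\NN(\alpha)\le\boundIII{}$ is represented by $L$, apart from the nine listed exceptions.

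Since representability depends only on the class of $\alpha$ in $\OO$, I enumerate one representative per class. Write $\alpha=x+y\sqrt3$ and multiply by a power of $\ve^2=7+4\sqrt3$ so that $1\le\alpha/\overline{\alpha}<\ve^2$. Then $\alpha\overline{\alpha}=\NN(\alpha)$ bounds both embeddings explicitly, so $x$ and $|y|$ are bounded by roughly $\ve\sqrt{\NN(\alpha)}$. This gives a finite, explicit list of candidates.

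For each candidate $\alpha$ I look for $\vv\in L$ with $Q(\vv)=\alpha$. Total positivity bounds the coordinates: each coordinate satisfies $\sigma(x_i)^2\le\sigma(\alpha)\cdot\sigma(\G^{-1})_{ii}$ in both embeddings $\sigma$. This is the standard Fincke--Pohst style search, done embedding by embedding.

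The efficient implementation reduces the number of searches. I use the orthogonal splitting $\qf{1,\ve}\perp J_a$ and precompute all values of $\qf{1,\ve}$ up to the relevant bounds. For each candidate $\beta\preceq\alpha$ represented by $J_a$, I then test whether $\alpha-\beta$ lies in that set (including $0$).

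I also use the symmetry $\qf{1,\ve}\simeq\qf{\ve,\ve^2}$, that is, $\ve Q$ on $\qf{1,\ve}$ is isometric to it. However, $J_a$ is not obviously $\ve$-invariant, so I would not rely on this and would simply check all classes. The conjugate pair $a=1\pm\sqrt3$ needs only one member, which is why just five lattices appear.

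The main obstacle is computational cost near norm $\boundIII{}$: the number of classes grows linearly in the norm bound, and the representation search per class grows too. I would first try a sieve-style approach instead of per-element search. I would enumerate all vectors of $J_a$ with both embeddings of $Q$ below the bounds coming from the reduced fundamental domain, and likewise for $\qf{1,\ve}$. I would store the represented values, normalised modulo $\U_K^2$, in a hash set. Every sum of a $J_a$-value and a $\qf{1,\ve}$-value is then reduced modulo $\U_K^2$ and marked as represented.

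There is a subtlety: an element may only be reachable through a non-reduced representative. To handle this, I would enumerate sums with each embedding bounded by the maximal embedding over the reduced domain. Finally, I would list the unmarked classes with norm $\le\boundIII{}$ and check that they are among $5\pm2\sqrt3$, $8\pm2\sqrt3$, $15\pm7\sqrt3$, $15\pm6\sqrt3$, $13$, as in the analogous Magma computation for Lemma \ref{le:boundHuge}.
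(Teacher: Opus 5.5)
Your plan matches the paper's approach: the paper establishes this lemma purely by a Magma computation over all classes of norm at most $\boundIII{}$. However, your reduction step contains a concrete error that would leave the verification incomplete.

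Since $\overline{\ve}=\ve^{-1}$, multiplying $\alpha$ by $\ve^{2}$ multiplies $\alpha/\overline{\alpha}$ by $\ve^{4}$, not by $\ve^{2}$. So the window $1\le\alpha/\overline{\alpha}<\ve^{2}$ is a fundamental domain for $\langle\ve\rangle$ (all totally positive units), not for $\U_K^2=\langle\ve^2\rangle$. In general a class cannot be moved into it. For every $\alpha$, exactly one of the two classes $\alpha\U_K^2$ and $\ve\alpha\U_K^2$ meets your window. For example, the class of $\ve$ itself has ratios $\ve^{4k+2}$ and never does.

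Consequently, your candidate list and your final list of unmarked classes would silently omit half of $\OO$. Nothing else in your plan recovers those classes. You rightly refuse to use $\ve$-scaling, because these lattices are genuinely not $\ve$-symmetric: for instance $1+2\ve\not\to L$, while $\ve(1+2\ve)=\ve^{2}(2+\overline{\ve})\to L$. This is exactly the asymmetry that Lemma~\ref{le:technical} exploits.

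The fix is easy: normalise to $\ve^{-2}\le\alpha/\overline{\alpha}<\ve^{2}$, which is a genuine fundamental domain for $\U_K^2$. Both embeddings of $\alpha$ are then at most $\ve\sqrt{\NN(\alpha)}$. Hence your coefficient bounds, the bounds $\sigma(x_i)^2\le\sigma(\alpha)\,\sigma(\G^{-1})_{ii}$, and the common embedding cut-off in your sieve all go through unchanged. Marking sums of actual values and normalising only afterwards is sound, and bounding both summands by the embeddings of $\alpha$ makes it complete. With that correction, your proposal correctly describes the computation the paper reports.
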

	
	Now we are ready to prove the main result of this section. The technical details might be confusing at first, so let us sketch the main idea: Let $L$ be a lattice representing all the six elements of norms $1$ and $13$. In the simple cases, it contains one of the lattices $\qf{1,\ve,1}$, $\qf{1,\ve,\ve}$, $\qf{1,\ve,1+\ve}$ or $\qf{1,\ve,1+\overline{\ve}}$, which are universal or covered by the strong Conjecture \ref{co:sqrt3ternary}. If $L$ contains none of them, then the surprising part comes: We prove that every $\alpha\in\Z[\sqrt3]^+$ is represented either by the lattice $M$ generated by vectors that represent $1,\ve,2+\ve,2+\overline{\ve}$, or by the lattice $N$ generated by vectors that represent $1,\ve,2\ve+1,2\overline{\ve}+1$. This last trick immensely decreases the number of possibilities to be considered, since it makes all the values $B_Q(\vv,\ww)$ where $\vv\in M \setminus N$ and $\ww\in N \setminus M$ immaterial.
	
	We separate a part of this into a technical lemma:
	
	\begin{lemma} \label{le:technical}
		Let $M_1,M_2$ be two lattices, both satisfying the following conditions: $M_i$ represents $1$, $\ve$, $2+\ve$ and $2+\overline{\ve}$; further, $M_i$ contains neither $\qf{1,\ve,1+\ve}$ nor $\qf{1,\ve,1+\overline{\ve}}$. Then the following holds:
		
		Let $\alpha\in\Z[\sqrt3]^+$. Assume either Conjecture \ref{co:5lattices} or $\NN(\alpha)\leq \boundIII{}$. Then $\alpha\to M_1$ or $\ve\alpha\to M_2$.
	\end{lemma}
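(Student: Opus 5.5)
The plan is to reduce each $M_i$ to one of two very concrete sublattices whose represented sets we already control, and then to check that the two \enquote{bad sets} cannot both be hit by $\alpha$ and by $\ve\alpha$.

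\emph{Step 1: structure of each $M_i$.} Take vectors $\ee_1,\ee_2\in M_i$ with $Q(\ee_1)=1$ and $Q(\ee_2)=\ve$. Since $\ve$ is indecomposable and not a square, $B_Q(\ee_1,\ee_2)=0$ exactly as in the escalation computation preceding Lemma \ref{le:ternaryescalators}, so $\qf{1,\ve}\subset M_i$. Let $\vv\in M_i$ with $Q(\vv)=2+\ve$, and replace it by its projection $\ww$ orthogonal to $\qf{1,\ve}$. As computed before Lemma \ref{le:ternaryescalators}, we get $Q(\ww)\in\{1,2,\ve,1+\ve,2+\ve\}$.
\begin{itemize}
\item The values $1$ and $\ve$ make $M_i$ contain a universal lattice. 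Then $\alpha\to M_1$ or $\ve\alpha\to M_2$ trivially holds if this happens for the relevant index.
\item The value $1+\ve$ is excluded by hypothesis.
\end{itemize}
Applying the conjugate argument to the vector representing $2+\overline{\ve}$ (using $\qf{1,\ve}\simeq\qf{1,\overline{\ve}}$ and the hypothesis excluding $\qf{1,\ve,1+\overline{\ve}}$), its projection $\ww'$ has value $1$, $\ve$ (up to unit squares), $2$, or $2+\overline{\ve}$. Hence each $M_i$ falls into one of two cases. In case (A), $M_i\supset\qf{1,\ve,2}$. In case (B), $M_i\supset\qf{1,\ve}\perp J$ with $J$ the Gram matrix of $(\ww,\ww')$, which is exactly the shape of Lemma \ref{le:2+eps2+bareps}. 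The matrix cannot be degenerate, since $a^2=13$ has no solution. By that lemma, $a=\pm2\sqrt3$ gives universality and $a=\pm3$ puts $2\to J$, reducing to case (A). Up to sign and conjugation, the remaining values are $a\in\{0,1,2,\sqrt3,1+\sqrt3\}$ (the conjugate $1-\sqrt3$ is handled by conjugating the exception list, which is conjugation-stable).

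\emph{Step 2: bad sets.} In case (A), Proposition \ref{pr:regularternary} says that every $\alpha\succ0$ with $\alpha\not\equiv\pm(1+2\sqrt3)\pmod 4$ is represented. In case (B), Conjecture \ref{co:5lattices}, or Lemma \ref{le:boundIII} when $\NN(\alpha)\le\boundIII$, says that everything outside the nine listed exceptions is represented. Note that $\NN(\ve\alpha)=\NN(\alpha)$, so the norm-bounded version is applied consistently to both $\alpha$ and $\ve\alpha$. Both bad sets are unions of classes in $\Z[\sqrt3]^+/\U^2$. For the mod-$4$ set this holds because $\ve^2\equiv 3\pmod 4$ and the $\pm$ absorbs it.

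\emph{Step 3: disjointness check.} It suffices to show that no $\alpha$ is bad for $M_1$ while $\ve\alpha$ is bad for $M_2$, in each of the four case combinations.
\begin{itemize}
\item (A,A): $\ve(1+2\sqrt3)=8+5\sqrt3\equiv\sqrt3\pmod4$, and the same holds after multiplying by unit squares, so this is never in the forbidden class.
\item Mixed and (B,B) cases: for each of the nine exceptions $\beta$, compute $\ve\beta$ and $\ve^{-1}\beta$ modulo $4$ and modulo unit squares. Check that $\ve\beta$ is neither an exception nor $\equiv\pm(1+2\sqrt3)\pmod4$, and that no exception equals $\ve$ times an element of the forbidden mod-$4$ class.
\end{itemize}
This is a finite computation on elements of norm at most $169$. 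I expect it to be the main obstacle: one must compare square classes carefully, since the exceptions come in conjugate pairs and multiplication by $\ve$ permutes norm classes. In particular, $\ve\cdot13$ versus the other norm-$169$ exceptions needs an explicit check that $13\ve$ is not $\ve^{2k}$ times one of $15\pm7\sqrt3$ or $15\pm6\sqrt3$.
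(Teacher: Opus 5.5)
Your proposal is correct and follows essentially the same route as the paper. Unless $M_i$ is universal, you reduce it to containing either $\qf{1,\ve,2}$ or one of the lattices $\qf{1,\ve}\perp J$ of Lemma \ref{le:2+eps2+bareps}, so its non-represented elements lie in the class $\pm(1+2\sqrt3)\pmod 4$ or in the set $R$ of nine exceptions from Conjecture \ref{co:5lattices}; you then check that $\alpha$ and $\ve\alpha$ cannot both fall into these bad sets. One small slip in your closing remark: since $\NN(\ve)=1$, multiplication by $\ve$ preserves norms, and $15\pm7\sqrt3$ and $15\pm6\sqrt3$ have norms $78$ and $117$, not $169$, so $13\ve$ only needs to be compared with $13$, and the check $R\cap\ve R=\emptyset$ only ever compares elements of equal norm.
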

	\begin{proof}
		For most of the proof, let us write $M_i$ for either of the two lattices; we have exactly the same knowledge about both of them.
		
		First note that if $M_1$ or $M_2$ is universal, then the lemma trivially holds; thus, in the rest of the proof, we can assume that $M_i$ contains neither $\qf{1,\ve,1}$ nor $\qf{1,\ve,\ve}$.
		
		Now, our next aim is to prove: \emph{Either $M_i$ contains $\qf{1,\ve,2}$, or $\alpha\not\to M_i$ implies $\alpha\in R= \{5\pm2\sqrt3, 8\pm2\sqrt3, 15\pm7\sqrt3, 15\pm6\sqrt3, 13\}$.}
		
		This is obvious if $M_i$ contains $\qf{1,\ve,2}$, so we can assume that it does not.
		
		Consider now the sublattice of $M_i$ generated by vectors representing $1$, $\ve$, $2+\ve$. We know all the five possibilities by Lemma \ref{le:ternaryescalators}; however, under our current assumptions the only valid possibility is $\qf{1,\ve,2+\ve}$. Now fix the two vectors representing $1$ and $\ve$, and consider the sublattice generated by them together with the vector representing $2+\overline{\ve}$. By applying a \enquote{conjugated version of Lemma \ref{le:ternaryescalators}}, we again know that there are only five possibilities, and again, four of them are forbidden by our assumptions. Thus this sublattice is isometric to $\qf{1,\ve,2+\overline{\ve}}$.
		
		Since we fixed the vectors representing $1$ and $\ve$, we now arrived at the conclusion that the sublattice generated by all the four vectors representing $1$, $\ve$, $2+\ve$ and $2+\overline{\ve}$ together must be of the form $\qf{1,\ve} \perp J$ where $J\simeq\bigl\langle\begin{smallmatrix} 2+\ve & a \\ a & 2+\overline{\ve} \end{smallmatrix}\bigr\rangle$, just as in Lemma \ref{le:2+eps2+bareps}. By this lemma, since we assume that $M_i$ neither is universal nor contains $\qf{1,\ve,2}$, there remain only six possibilities for $a$. For five of them, Conjecture \ref{co:5lattices} or Lemma \ref{le:boundIII} directly yield the implication \enquote{$\alpha\not\to M_i$ implies $\alpha\in R$}. For the sixth, with $a=1-\sqrt3$, we know that the statement holds for the conjugate lattice; and since $R$ is closed under conjugation, the desired implication follows.
		
		Now that we proved the claim, the rest is easy. Assume $\alpha \not\to M_i$. Then, using Proposition \ref{pr:regularternary} for its characterisation of elements represented by $\qf{1,\ve,2}$ yields:
		\[
		\alpha \equiv \pm(1+2\sqrt{3}) \qquad\text{ or } \qquad \alpha \in R.
		\]
		So it remains to check that if these conditions apply to $\alpha$, then they cannot apply to $\ve\alpha$ as well. This is straightforward. We see that $R \cap \ve R = \emptyset$ and that if $\alpha\equiv \pm(1+2\sqrt{3})\pmod4$, then $\ve\alpha \equiv \pm(2+\sqrt3)(1+2\sqrt3)\equiv \pm\sqrt3 \pmod4$. The remaining implications to be checked are \enquote{$\alpha \in R$ implies $\ve\alpha\not\equiv \pm(1+2\sqrt3)$} and \enquote{$\alpha\equiv \pm(1+2\sqrt3)$ implies $\ve\alpha \not\in R$}; they are in fact equivalent and one easily checks one of them.
	\end{proof}
	
	With this preparation, the rest of the proof is short and elegant.
	
	\begin{proof}[Proof of Proposition \ref{pr:sqrt3main}]
		Let $(L,Q)$ be a lattice representing all of $1,\ve,2+\ve,2+\overline{\ve}, 1+2\ve, 1+2\overline{\ve}$.
		
		If $L$ contains $\qf{1,\ve,1+\ve}$ or its conjugate $\qf{1,\ve,1+\overline{\ve}}$, then by Conjecture \ref{co:sqrt3ternary} (or by Lemma \ref{le:boundHuge} if we do not assume the conjecture but instead study $\alpha$ with bounded norm) this ternary sublattice already represents all elements except for two elements of norm $13$ and two elements of norm $37$. Since all elements of norm $13$ are represented by $L$ by assumption, the only possible non-represented elements have norm $37$ and the proposition is proved.
		
		So in the rest of the proof we assume $L$ to contain neither $\qf{1,\ve,1+\ve}$ nor $\qf{1,\ve,1+\overline{\ve}}$. Observe that then the same holds for the scaled version $(L,\ve Q)$ as well. The crucial point now is that both the lattice $(L,Q)$ and its scaled version $(L,\ve Q)$ represent $1,\ve,2+\ve$ and $2+\overline{\ve}$. Thus they both satisfy the assumptions of Lemma \ref{le:technical}.
		
		Hence, if we assume Conjecture \ref{co:5lattices} or $\NN(\alpha)\leq \boundIII{}$, then we get either $\alpha \to (L,Q)$, or $\ve\alpha \to (L,\ve Q)$, which is of course equivalent to $\alpha \to (L,Q)$ as well.
		
		This concludes the proof -- and we see that in fact, if $L$ contains neither of the two ternary sublattices, it is actually universal (under the Conjectures or up to norm $\boundIII{}$, of course).
	\end{proof}
	
	By proving Proposition \ref{pr:sqrt3main}, the proof of Theorem \ref{th:sqrt3} is concluded.

	\section{Diagonal criteria for \texorpdfstring{$\qq{D}$}{Q(√D)}, \texorpdfstring{$D=2,{3},5$}{D=2, 3, 5}} \label{se:diagExplicit}
	
	Let us now briefly discuss the classical universality criterion for $\qq{5}$ and then the diagonal criteria for the first three quadratic fields.
	
	Recall that  Lee computed the universality criterion set for classical lattices over $\qq{5}$. Namely, \cite[Thm.~3.5]{Le}, which we already stated in the Introduction as the Norm-45-Theorem, claims that a classical quadratic lattice over $\qq{5}$ is universal if and only if it represents all elements of
	\[
	\Ccl_{\qq{5}}=\Bigl\{1,2,\frac{5+\sqrt5}{2}, \frac{7\pm\sqrt5}{2},2\cdot\frac{5+\sqrt5}{2}, 3\cdot\frac{5+\sqrt5}{2}\Bigr\}.
	\]
	
	We are able to strengthen this result as follows.
	
	\begin{theorem} \label{th:sq5unique}
		The set $\Ccl_{\qq{5}}$ given by Lee is minimal (i.e.\ it is indeed $\Ccl_{\qq{5}}$ in the sense of \cite{KKR}): For every $\alpha \in \Ccl_{\qq{5}}$, there exists a classical quadratic lattice $L_{\alpha}$ which represents all other elements of $\Z\bigl[\frac{1+\sqrt5}{2}\bigr]^{+}$ but not $\alpha\U_{\qq{5}}^2$.
	\end{theorem}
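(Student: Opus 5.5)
By Proposition \ref{pr:KKRcritical}, $\alpha$ is critical exactly when it is a truant of some classical lattice. So the plan is to exhibit, for each element of Lee's set, a classical lattice whose truant is that element; criticality then follows. Lee's theorem says Lee's set is a criterion set, and by Theorem \ref{th:KKRunique} every criterion set contains $\Ccl_{\qq{5}}$. Together these give equality, which is the claimed minimality. Conjugation preserves the set of critical elements, since criterion sets are closed under field automorphisms \cite[Cor.~3.5]{KKR}. Hence for the pair $\frac{7\pm\sqrt5}{2}$ it suffices to treat one sign. We therefore need lattices with truants $1$, $2$, $\frac{5+\sqrt5}{2}$, $\frac{7+\sqrt5}{2}$, $2\cdot\frac{5+\sqrt5}{2}$ and $3\cdot\frac{5+\sqrt5}{2}$.

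I would follow the escalation tree of Section \ref{se:finite} over $\qq{5}$, essentially Lee's own escalation, and read off the truants. Write $\varphi=\frac{1+\sqrt5}{2}$; the norms of the six elements are $1,4,5,11,20,45$.

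\begin{enumerate}
\item The zero lattice has truant $1$.
\item The lattice $\qf{1}$ has truant $2$. Indeed, $\frac{5+\sqrt5}{2}$ has norm $5>4$, and every unit is a square times $\pm1$, so all totally positive units are squares.
\item The lattice $\qf{1,1}$ represents $2$. It should fail to represent $\frac{5+\sqrt5}{2}=\varphi^2+1+\dots$; this needs checking, and in any case the first non-represented element has norm $5$.
\item For norm $11$, I would take $\qf{1,1,1}$ or a suitable binary lattice. For $\frac{7+\sqrt5}{2}$, which has norm $11$, one would need a lattice representing $\frac{5+\sqrt5}{2}$ and its conjugate but not $\frac{7+\sqrt5}{2}$.
\item For norms $20$ and $45$, I would take diagonal forms such as $\qf{1,1,2}$ and $\qf{1,1,2,\frac{5+\sqrt5}{2}}$, or their analogues, whose truants are $2\cdot\frac{5+\sqrt5}{2}$ and $3\cdot\frac{5+\sqrt5}{2}$.
\end{enumerate}

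For each candidate we must verify two things. Representation of all totally positive elements of smaller norm is a finite check: by the preliminaries there are only finitely many elements of $\OO$ below a norm bound. Non-representation of $\alpha$ itself is also a finite check, via the Cauchy--Schwarz-type bound $Q(\vv)\succeq$ (coordinate contributions) for a diagonal form: each summand must satisfy $\alpha_ix_i^2\preceq\alpha$, leaving finitely many $x_i$ to try. Both checks are routine case work, which I would do by hand or by Magma.

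The main obstacle is choosing the right lattices for the largest elements, $3\cdot\frac{5+\sqrt5}{2}$ and the norm-$11$ pair. For these, natural small forms may have a smaller truant or may represent $\alpha$. To find them I would run the escalation explicitly, as in the $\qq{2}$ computation: compute the escalations of the ternary escalators with truant of norm $20$ and select one whose truant is the norm-$45$ element. Proposition \ref{pr:escalators}(d) guarantees such an escalator exists, because Lee's proof shows $3\cdot\frac{5+\sqrt5}{2}$ is needed. More concretely, I would extract the witnesses from Lee's escalation tree, which lists ternary escalators of class number $1$. The local--global principle then decides exactly which elements they represent, so the truants are certified rigorously.
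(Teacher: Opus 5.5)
Your framework is right and matches the paper's. Lee's set is a criterion set, so it contains $\Ccl_{\qq{5}}$. By Proposition \ref{pr:KKRcritical} it then remains to exhibit, for each element, a classical lattice having it as a truant. But exhibiting these witnesses is the entire content of the proof, and most of the ones you propose are wrong.
\begin{itemize}
\item Since $\frac{5+\sqrt5}{2}=\varphi^2+1^2$, the lattice $\qf{1,1}$ does represent it. In fact $\qf{1,1}$ represents every element of norm at most $9$ (for instance $3=\varphi^2+\varphi^{-2}$), and its truants are $\frac{7\pm\sqrt5}{2}$. So $\qf{1,1}$ is the witness for the norm-$11$ pair, while the norm-$5$ element needs $\qf{1,2}$.
\item Your other candidates have no truant at all. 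The form $\qf{1,1,1}$ is universal (Maa{\ss}). The form $\qf{1,1,2}$ is universal too: it represents, e.g., $5+\sqrt5=1+1+2\varphi^2$, and universality is shown in the proof of Theorem \ref{th:diagonal}. Hence $\qf{1,1,2,\frac{5+\sqrt5}{2}}$ is also universal.
\item A correct witness for $2\cdot\frac{5+\sqrt5}{2}$ is $\qf{1,2,\frac{5+\sqrt5}{2}}$.
\end{itemize}

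The gap at the largest element is more serious. For $3\cdot\frac{5+\sqrt5}{2}$, no diagonal form \enquote{or analogue} can work. Theorem \ref{th:diagonal} shows this element is not in $\Cdiag_{\qq{5}}$, so by the diagonal version of Proposition \ref{pr:KKRcritical} it is the truant of no diagonal form. You need a genuinely non-diagonal lattice, such as $M\simeq\qf{1,2}\perp\bigl\langle\begin{smallmatrix} (5+\sqrt5)/2 & (1+\sqrt5)/2 \\ (1+\sqrt5)/2 & 5+\sqrt5 \end{smallmatrix}\bigr\rangle$ from Lee's computations. You then perform two finite checks: every element of norm below $45$ is represented, and $3\cdot\frac{5+\sqrt5}{2}$ is not.

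Your fallback of invoking Proposition \ref{pr:escalators}(d) \enquote{because Lee's proof shows the element is needed} is circular. Part (d) presupposes that the element is critical, which is exactly what the theorem asserts, and Lee proved only sufficiency of his set, not minimality. Extracting a concrete lattice with truant $3\cdot\frac{5+\sqrt5}{2}$ from Lee's escalation is the step that must actually be carried out, and your proposal does not do it. As a minor point, no local--global argument is needed to certify a truant; it is a finite computation. Besides, the witness for norm $45$ is quaternary, not one of Lee's class-number-one ternaries.
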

	Note that Lee did not construct the lattices $L_{\alpha}$, and neither shall we. Their existence follows from the fact that Lee found lattices which have them as their truants, so one can use Proposition \ref{pr:KKRcritical}.
	\begin{proof}
		By Lee's result, this set is a criterion set. Therefore it contains all critical elements. It remains to prove that all its elements are indeed critical.
		
		By Proposition \ref{pr:KKRcritical}, it is enough to find a lattice with truant $\alpha$ for every $\alpha\in\Ccl_{\qq{5}}$. They can be found in Lee's computations (but it is easy to check the truants independently): $\truant\bigl(\{\vectorstyle{0}\}\bigr)=1$; $\truant\bigl(\qf{1}\bigr)=2$; $\truant\bigl(\qf{1,2}\bigr)=\frac{5+\sqrt5}{2}$; $\truant\bigl(\qf{1,1}\bigr)$ is not unique and takes two values $\frac{7\pm\sqrt5}{2}$; $\truant\bigl(\qf{1,2,\frac{5+\sqrt5}{2}}\bigr)=2\cdot\frac{5+\sqrt5}{2}$; $\truant(M)=3\cdot\frac{5+\sqrt5}{2}$ for $M \simeq \qf{1,2}\perp \bigl\langle\begin{smallmatrix} (5+\sqrt5)/2 & (1+\sqrt5)/2 \\ (1+\sqrt5)/2 & 5+\sqrt5 \end{smallmatrix}\bigr\rangle$.
	\end{proof}

	When one knows $\Ccl_K$, one fully understands universality of classical quadratic forms over $K$. Therefore, one can use it to compute the criterion set for diagonal forms.
	
	\begin{theorem} \label{th:diagonal}
		We have
		\[
		\Cdiag_{\qq{5}} = \Bigl\{1,2,\frac{5+\sqrt5}{2}, \frac{7\pm\sqrt5}{2},2\cdot\frac{5+\sqrt5}{2}\Bigr\}.
		\]
		
		Assuming universality of $E_{4,(5)}^{(1)}\simeq\qf{1,2+\sqrt2,3,3(2+\sqrt2)}$, 
		$E_{4,(5)}^{(19)}\simeq\qf{1,2+\sqrt2,3,2(2+\sqrt2)}$ and
		$E_{4,(5)}^{(41)}\simeq\qf{1,2+\sqrt2,3,3+\sqrt2}$, we also get
		\[
		\Cdiag_{\qq{2}} = \bigl\{1,2+\sqrt2,3,3(2+\sqrt2)\bigr\}.
		\]
		In both cases, every universal diagonal form contains a ternary or quaternary universal diagonal subform.
	\end{theorem}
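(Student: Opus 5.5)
The plan is to run the pseudoescalation procedure of Section \ref{se:diagonal}, in the explicit form of Proposition \ref{pr:diagonalpractical}, and to use the known classical criterion as the universality oracle in Algorithm \ref{al:DIescalation}. Over $\qq{5}$ the oracle is Lee's set $\Ccl_{\qq{5}}$: a diagonal form is classical, so it is universal if and only if it represents the seven elements of the Norm-45-Theorem. Over $\qq{2}$ the oracle is only partial. A diagonal form containing one of $E_3^{(1)}$, $E_{4,(5)}^{(1)}$, $E_{4,(5)}^{(19)}$, $E_{4,(5)}^{(41)}$ as a principal diagonal subform is universal, the last three by assumption. A form that fails to represent an element is certainly not universal. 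The aim is that these two tests decide every pseudoescalator that occurs, so that no use of Conjecture \ref{co:mainsqrt2} is needed.

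For $\qq{5}$ I would proceed as follows. Start from $\qf{}$, with truant $1$. The only pseudoescalation is $\qf{1}$, with truant $2$, and $N_2$ gives $\qf{1,1}$ and $\qf{1,2}$. Continue in the same way, each time listing $M_\alpha=\{\beta\preceq\alpha\}$ and dividing by squares to get $N_\alpha$. At each node I test the finitely many elements of Lee's set to decide universality, and otherwise record the truant. I expect the truants to be $1$, $2$, $\frac{5+\sqrt5}{2}$, $\frac{7\pm\sqrt5}{2}$ and $2\cdot\frac{5+\sqrt5}{2}$, and I expect every node reached to be universal by depth $4$. By Proposition \ref{pr:DIescalators}(d) and Theorem \ref{th:DIfinite}, $\Cdiag_{\qq{5}}$ is exactly the set of truants that occur. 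For the lower inclusion, the truant computations in the proof of Theorem \ref{th:sq5unique} already exhibit diagonal forms with each of the six truants. The key claim is the upper inclusion: $3\cdot\frac{5+\sqrt5}{2}$ is never the truant of a diagonal pseudoescalator. The lattice $M$ with this truant is not diagonal. So I must check that every diagonal escalation of $\qf{1,2,\frac{5+\sqrt5}{2}}$, namely $\qf{1,2,\frac{5+\sqrt5}{2},\gamma}$ with $\gamma\in N_{5+\sqrt5}$, already represents $3\cdot\frac{5+\sqrt5}{2}$, and similarly along the other branches. Since universality of the subtrees follows from Lee, the fact that the tree stops at ranks $3$ and $4$ also gives the final sentence of the theorem.

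For $\qq{2}$ the tree starts $\qf{}\to\qf{1}\to\qf{1,2+\sqrt2}$, with truants $1$, $2+\sqrt2$ and $3$. Then $N_3$ (the elements $\beta\preceq 3$, i.e.\ $1,2,3,2\pm\sqrt2$, and the results of dividing by squares) gives the diagonal ternaries $\qf{1,2+\sqrt2,\gamma}$ with $\gamma\in\{1,2,3,2-\sqrt2\}$. Here $\gamma=1$ gives $E_3^{(1)}$. Also $\qf{1,2+\sqrt2,2-\sqrt2}$ is isometric to $E_3^{(1)}$ up to the unit square, or is universal by \cite{CKR}; I need to check which. The remaining two forms, $E_3^{(6)}$ and $E_3^{(5)}$, have truant $3(2+\sqrt2)$, so the diagonal quaternaries are $\qf{1,2+\sqrt2,2,\gamma}$ and $\qf{1,2+\sqrt2,3,\gamma}$ for $\gamma\in N_{3(2+\sqrt2)}$. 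The first family is universal by Proposition \ref{pr:6universality}, or directly by Lemma \ref{le:diagsublat}, since each such form is a diagonal escalation of $E_3^{(6)}$. For the second family I would match each $\gamma$ against the list \eqref{eq:list5}. The diagonal ones correspond to $b=c=0$, with $\gamma=3(2+\sqrt2)-a^2$ or a divisor-by-square of it. I would then show that each is isometric to, or contains, $E_{4,(5)}^{(1)}$, $E_{4,(5)}^{(19)}$, $E_{4,(5)}^{(41)}$, or a universal ternary (for example, $\gamma=1$ or $2+\sqrt2$ gives a known universal sublattice). The crucial point is that $E_{4,(5)}^{(2)}$, the one with truant $3(3-\sqrt2)$, is not diagonal. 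Hence the truants $3(3\pm\sqrt2)$ never arise, and all diagonal pseudoescalators are universal at rank $\le4$.

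I expect the main obstacle to be the bookkeeping of $N_\alpha$ for the largest truants, $2\cdot\frac{5+\sqrt5}{2}$ and $3(2+\sqrt2)$. These sets contain elements that only appear after dividing non-squarefree $\beta\preceq\alpha$ by squares. Each resulting quaternary must then be matched with a known universal form, either through Lee's finite check or through one of the three assumed lattices. I would first enumerate all $\beta\preceq\alpha$ by trace, and then reduce modulo unit squares before testing.
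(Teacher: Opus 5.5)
Your plan follows the paper's proof. You run pseudoescalation through Proposition~\ref{pr:diagonalpractical}. Over $\qq{5}$ you use Lee's set as the universality oracle, and the diagonal forms from the proof of Theorem~\ref{th:sq5unique} give the lower inclusion, so only $3\cdot\frac{5+\sqrt5}{2}$ has to be excluded. Over $\qq{2}$ your oracle is $E_3^{(1)}$, Proposition~\ref{pr:6universality} and the three assumed quaternaries. The $\qq{5}$ half is fine as planned, but your $\qq{2}$ enumeration contains errors.

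First, $2\pm\sqrt2\not\preceq 3$, since one embedding of each equals $2+\sqrt2>3$. Hence $M_3=\{1,2,3\}$ and $N_3=\{1,2,3\}$ (as $2/\sqrt2^{\,2}=1$), so there are exactly three ternary pseudoescalators. Your extra node $\qf{1,2+\sqrt2,2-\sqrt2}\simeq\qf{1,2+\sqrt2,2+\sqrt2}$ is not a pseudoescalation, and neither of your guesses about it holds. No nonzero $(2-\sqrt2)z^2$ is $\preceq 3$, because its norm $2\NN(z)^2$ is never $1$, $4$ or $9$. Since $\qf{1,2+\sqrt2}$ does not represent $3$, this form does not even represent $3$.

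This error propagates. At the truant $3(2+\sqrt2)$ the correct set is $N=\{1,\,2+\sqrt2,\,3\pm\sqrt2,\,2(2+\sqrt2),\,3(2+\sqrt2)\}$. To get it, discard $\ve^2$, use $5+3\sqrt2=\ve^2(3-\sqrt2)$, and note $2(2+\sqrt2)/\sqrt2^{\,2}=2+\sqrt2$. Treat each $\gamma$ directly rather than looking for entries of \eqref{eq:list5} with $b=c=0$; that identification is wrong, since $E_{4,(5)}^{(19)}$ comes from $(0,\pi_2,0)$. For $E_3^{(5)}\perp\qf{\gamma}$ the cases are as follows.

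$\gamma=1$: the form contains $E_3^{(1)}$.

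$\gamma=2+\sqrt2$: this does \emph{not} give a known universal sublattice, because $\qf{1,2+\sqrt2,2+\sqrt2}$ is not universal. You must instead use that it contains $E_{4,(5)}^{(19)}$ through the vector $\sqrt2\ee_4$, so this case is conditional as well.

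$\gamma=3(2+\sqrt2),\,2(2+\sqrt2),\,3+\sqrt2$: these give the three assumed lattices.

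$\gamma=3-\sqrt2$: the form is neither isometric to nor contains any of the three assumed lattices. It is the conjugate of $E_{4,(5)}^{(41)}$, so you need the additional fact that conjugation preserves universality.

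It is the universality of these six forms, not the non-diagonality of $E_{4,(5)}^{(2)}$, that excludes $3(3\pm\sqrt2)$.

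Finally, in the $E_3^{(6)}$ branch you need not claim that each $\qf{1,2+\sqrt2,2,\gamma}$ is itself an escalation. It contains the escalation generated by $E_3^{(6)}$ and a vector representing $3(2+\sqrt2)$, which is universal by Proposition~\ref{pr:6universality}.
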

	Although we know (from \cite{Le} and from Section \ref{se:sqrt2}) the full lists of classical escalators, this is of no direct use for the proof, since they do not help us to produce pseudoescalators. We will compute the pseudoescalators independently, based on the explicit description from Proposition \ref{pr:diagonalpractical}, and \enquote{only} use the knowledge of $\Ccl_{\qq{5}}$ as an effective criterion for proving universality. (We could do the same with the conditional knowledge of $\Ccl_{\qq{2}}$, but we do not want to assume the full Conjecture \ref{co:13lattices}, and so we compute the diagonal criterion for $\qq{2}$ independently on the main results of Section \ref{se:sqrt2}.)
	\begin{proof}[Proof of Theorem \ref{th:diagonal}]
		We compute pseudoescalators using Proposition \ref{pr:diagonalpractical}. For $\qq{5}$, we will use the notation $\varphi = \frac{1+\sqrt5}{2}$. The truant of $\{\vectorstyle{0}\}$ is $1$, this has no decomposition, so the only pseudoescalation is $\qf{1}$ with truant $2$. This decomposes as $1+1$ or $2$, so the next pseudoescalators are $\qf{1,1}$ and $\qf{1,2}$. The former has truants $3+\varphi$ and $3+\overline{\varphi}$. This already means that both of them belong to $\Cdiag$; but we have to choose an order $\rho$, for example $3+\varphi <_\rho 3+\overline{\varphi}$, to continue with the escalation. One has $0 \prec \beta \preceq 3+\varphi$ precisely for $\beta = 1$, $2$, $1+\varphi$, $2+\varphi$ and $3+\varphi$; we can omit $1+\varphi = \varphi^2 \in 1\cdot\U_{\qq{5}}^2$. These are all squarefree elements, so we do not have to add any other, and the pseudoescalations are $\qf{1,1,\beta}$ for $\beta = 1,2,2+\varphi, 3+\varphi$. All but the last one are universal (by \cite{CKR} or by application of $\Ccl_{\qq{5}}$). The last one, $\qf{1,1,3+\varphi}$, has truant $3+\overline{\varphi}$ as expected; symmetrically, its decompositions contain $\beta = 1$, $2$, $1+\overline{\varphi}$, $2+\overline{\varphi}$ and $3+\overline{\varphi}$, and $1+\overline{\varphi}$ can again be omitted. The four resulting pseudoescalators $\qf{1,1,3+\varphi,\beta}$ for $\beta = 1$, $2$, $2+\overline{\varphi}$, $3+\overline{\varphi}$ are all universal, since they represent all of $\Ccl_{\qq{5}}$. In the other branch, $\qf{1,2}$ has truant $2+\varphi$, and since we can again replace $1+\varphi$ by $1$, there are only two pseudoescalators, namely $\qf{1,2,1}$ which is universal, and $\qf{1,2,2+\varphi}$ with truant $2(2+\varphi)$. Studying the decompositions (and omitting duplicities such as $3+\sqrt5 = 2\varphi^2$), one finally arrives at the pseudoescalators $\qf{1,2,2+\varphi,\beta}$ for $\beta = 1$, $2$, $2+\varphi$, $3+\varphi$, $3+\overline{\varphi}$, $2(2+\varphi)$. They can all be checked to be universal. Once the escalation procedure is finished, we find $\Cdiag$ as the collection of all truants which we encountered.
		
		For $\qq{2}$, the process is the same, and there are actually fewer pseudoescalators: $\{\vectorstyle{0}\}$ with truant $1$, then $\qf{1}$ with truant $2+\sqrt2$, then $\qf{1,2+\sqrt2}$ with truant $3$. The decompositions of $3$ contain only $1,2$ and $3$; although $2$ is not squarefree, division by the nontrivial square gives $1$, which was already included in our list anyway, so there are only three pseudoescalators: $\qf{1,2+\sqrt2,1}$, which is universal, and our old acquaintances $E_3^{(6)}\simeq \qf{1,2+\sqrt2,2}$ and $E_3^{(5)}\simeq \qf{1,2+\sqrt2,3}$, both with truant $3(2+\sqrt2)$. Studying the decompositions, omitting $(1+\sqrt2)^2$ and replacing $5+3\sqrt2$ by $3-\sqrt2$, we conclude that in both cases $i=2,3$, the pseudoescalators are given by $\qf{1,2+\sqrt2,i,\beta}$ where $\beta = 1$, $2+\sqrt2$, $3\pm\sqrt2$, $2(2+\sqrt2)$, $3(2+\sqrt2)$. They are all universal -- if one assumes a selected part of Conjecture \ref{co:13lattices}.
	\end{proof}
	
	As for $\qq{3}$, the diagonal criterion is already included in our main results (Conjecture \ref{co:sqrt3} and Theorem \ref{th:sqrt3}). Note that, since our aim was to prove the equality of $\Ccl_{\qq{3}}$ with $\Cdiag_{\qq{3}}$, there was no need perform the pseudoescalation in this case. The reason is that, in contrast with the proof of Theorem \ref{th:diagonal}, we did not need to show that some element of $\alpha\in\Ccl$ does not belong into $\Cdiag$ by explicitly listing all pseudoescalations and checking that none of them has $\alpha$ as truant. Rather, for every $\alpha\in\Ccl$, it was enough to find one diagonal form with truant $\alpha$, and it was of no consequence whether this form corresponds to an escalator, or is a pseudoescalator, or neither of these. (Also, thanks to symmetry it was in fact enough to find three and not ten such diagonal forms, see the proof of Theorem \ref{th:sqrt3}.)
	
	One potential reason for going through the pseudoescalation procedure anyway would be to prove the diagonal part of Theorem \ref{th:sqrt3} under weaker assumptions. However, at least Conjecture \ref{co:sqrt3ternary} would still need to be assumed. And let us stress once again that one cannot obtain the sufficient conjecture just by omitting all non-diagonal forms from Conjecture \ref{co:5lattices} without any computation; it is quite possible that some of them occurs as a sublattice of a diagonal pseudoescalator.

	\section{Maximal critical element} \label{se:amax}
	
	We have observed an interesting phenomenon regarding the largest element of $\CC_K$. It is analogous to the following well-known \cite[Thm.~2.3 and 2.8]{Moon} facts: \emph{If a quadratic form (classical quadratic form, resp.) over $\Q$ represents all numbers $<290$ ($<15$, resp.), then it represents all numbers $>290$ ($>15$, resp.).} The same phenomenon was recently observed for coprime-universal quadratic forms over $\Q$, see \cite[Cor.~1.7]{BC}.
	
	\begin{proposition} \label{pr:non-con}
		Let $K = \qq{D}$ for $D=2,3,5$, and assume the validity of Conjecture \ref{co:13lattices} if $D=2$ and the validity of Conjectures \ref{co:sqrt3ternary} and \ref{co:5lattices} if $D=3$. 
		
		Let $\amax$ be an element of maximal norm in $\Ccl_K$. Then:
		\begin{enumerate}
			\item Any lattice representing $\CC_K \setminus \{\amax\}$ represents all of $\OKPlus$ except possibly $\amax$. \label{it:noncon1}
			\item More generally, any lattice representing all elements of strictly smaller norm than $\NN(\amax)$ also represents all elements of strictly bigger norm. \label{it:noncon2}
		\end{enumerate}
		The analogous statement for $\Cdiag_K$ holds if $D=3$ but fails for $D=2$ and $D=5$.
	\end{proposition}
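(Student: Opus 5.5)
Part \ref*{it:noncon1} follows from part \ref*{it:noncon2}, so I would prove \ref*{it:noncon2} first. Suppose $L$ represents all of $\OKPlus$ with norm below $\NN(\amax)$ but fails to represent some $\beta$ with $\NN(\beta)>\NN(\amax)$. Two cases arise.
\begin{enumerate}
\item If $L$ represents every element of norm $\NN(\amax)$, then every truant of $L$ has norm larger than $\NN(\amax)$. By Proposition \ref{pr:KKRcritical} such a truant is critical, which contradicts the maximality of $\amax$.
\item Otherwise, choose an admissible $\rho$. Then $\Srtruant(L)$ is some $\alpha$ with $\NN(\alpha)=\NN(\amax)$. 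By Proposition \ref{pr:subescalators}\ref*{it:2}, $L$ contains an escalator $E$ with truant $\alpha$, and it suffices to know that every such escalator represents all elements of norm greater than $\NN(\amax)$.
\end{enumerate}

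To reduce \ref*{it:noncon1} to \ref*{it:noncon2}: if $L$ represents $\Ccl_K\setminus\{\amax\}$ and misses some $\beta\neq\amax$ (up to squares of units), take a truant $\gamma$ of $L$. It is critical by Proposition \ref{pr:KKRcritical}, so $\gamma\in\Ccl_K$, hence $\gamma=\amax$. Therefore $L$ represents all elements of smaller norm, and \ref*{it:noncon2} shows that only elements of norm $\NN(\amax)$ can be missed. Where several critical elements share the maximal norm ($D=2,3$), I would read $\amax$ as any one of them. The statement then needs that $L$ misses only that one, and this is exactly what the case analysis below checks.

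The real content is the per-field description of the escalators whose truant has maximal norm.
\begin{itemize}
\item For $D=2$, $\amax=3(3\pm\sqrt2)$ and the relevant escalators are exactly $E_{4,(5)}^{(2)}$ and its conjugate (Propositions \ref{pr:ternaryesc}, \ref{pr:6summary}, \ref{pr:5summary}). Conjecture \ref{co:13lattices}\ref*{it:co1} says each of them misses only its truant. So $L$ misses at most $3(3-\sqrt2)$, because a copy of $E_{4,(5)}^{(2)}$ inside $L$ forces $3(3+\sqrt2)\to L$.
\item For $D=3$, the statement is essentially Proposition \ref{pr:sqrt3main} together with the fact that every element of norm below $37$ is represented.
\item For $D=5$, I would go through Lee's escalation tree. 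The escalators with truant $3\cdot\frac{5+\sqrt5}{2}$ (such as $M$ in the proof of Theorem \ref{th:sq5unique}) contain ternary class-number-one sublattices. A local computation, as in Lemma \ref{le:diagsublat}, should show that they represent everything else.
\end{itemize}
I expect the main obstacle to be this $D=5$ verification, since it requires re-extracting from \cite{Le} the complete list of those escalators and their represented sets.

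For the diagonal statement with $D=3$, nothing new is needed: by Theorem \ref{th:sqrt3} and Conjecture \ref{co:sqrt3} we have $\Cdiag_K=\Ccl_K$, and the classical argument applies to diagonal forms. For $D=2$ and $D=5$, I would give explicit counterexamples to \ref*{it:noncon1}.
\begin{itemize}
\item For $D=2$, $\amax^{\diag}=3(2+\sqrt2)$. The pseudoescalator $\qf{1,2+\sqrt2,3}$ represents $\Cdiag\setminus\{\amax^{\diag}\}$ but, by Remark \ref{re:l-gfails}, also misses $3(3\pm\sqrt2)$.
\item For $D=5$, $\amax^{\diag}=2\cdot\frac{5+\sqrt5}{2}$. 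I would take $\qf{1,2,\frac{5+\sqrt5}{2}}$ and check by a short computation that it misses $3\cdot\frac{5+\sqrt5}{2}$ as well. If it does not, I would search among the diagonal forms $\qf{1,2,2+\varphi,\beta}$ for one missing an element of larger norm.
\end{itemize}
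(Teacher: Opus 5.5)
Your proposal is correct and essentially follows the paper's proof. It reduces to escalators via Proposition~\ref{pr:subescalators}; your explicit remark that a truant of norm above $\NN(\amax)$ would be critical is a step the paper uses only implicitly. It then invokes $E_{4,(5)}^{(2)}$ and its conjugate under Conjecture~\ref{co:13lattices} for $D=2$ and Proposition~\ref{pr:sqrt3main} for $D=3$, and uses the counterexamples $\qf{1,2+\sqrt2,3}$ and $\qf{1,2,\frac{5+\sqrt5}{2}}$ in the diagonal case; the latter indeed misses $3\cdot\frac{5+\sqrt5}{2}$, so no fallback is needed.

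For $D=5$ you do more work than necessary: no new local computation is required, since \cite[Prop.~3.2]{Le} already shows that there are exactly three escalators with truant $3\cdot\frac{5+\sqrt5}{2}$ and that each of them represents all other elements, and this is what the paper cites.
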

	\begin{proof}
		Note that the proposition is equivalent to the same statement where \enquote{lattice} is replaced by \enquote{escalator}. Indeed, one implication is trivial, and the other is due to the fact that every lattice contains an escalator with the same truant, see Proposition \ref{pr:subescalators}.
		
		In $\qq{5}$, we have $\amax=3\cdot\frac{5+\sqrt5}{2}$, and $\Ccl_{\qq{5}}$ contains no other element of the same norm. Thus a lattice satisfying the conditions in \ref*{it:noncon1} or \ref*{it:noncon2} is either universal and there is nothing to prove, or it has $\amax$ as the truant. By \cite[Prop.~3.2]{Le}, there are only three escalators with this truant, and all of them represent all other elements.
		
		Assuming Conjecture \ref{co:13lattices}, in $\qq{2}$ we have two maximal elements, $3(3\pm\sqrt2)$. We have seen that for each of them there exists only one escalator with this truant, namely $E_{4,(5)}^{(3)}$ and $E_{4,(5)}^{(2)}$, respectively, and it indeed represents everything except the (unique) truant. This proves \ref*{it:noncon1} as well as \ref*{it:noncon2}.
		
		For $\qq{3}$, the statement is just a rephrasing of Proposition \ref{pr:sqrt3main}.
		
		We conclude with a brief discussion of the diagonal case. For $D=3$, we (conjecturally) have $\Cdiag_{\qq{3}}=\Ccl_{\qq{3}}$, so if the statement holds for all classical lattices, then it trivially holds for diagonal forms. For $D=2$, we (conjecturally) have $\Cdiag_{\qq{2}} = \bigl\{1,2+\sqrt2,3,3(2+\sqrt2)\bigr\}$ by Theorem \ref{th:diagonal}; we know (and can easily check) that $E_3^{(5)}\simeq \qf{1,2+\sqrt2,3}$ has truant $3(2+\sqrt2)$, but there are many other non-represented elements, for example $3(3+\sqrt2)$. For $D=5$, we know that $\Cdiag_{\qq{5}} = \bigl\{1,2,\frac{5+\sqrt5}{2}, \frac{7\pm\sqrt5}{2},2\cdot\frac{5+\sqrt5}{2}\bigr\}$ thanks to Theorem \ref{th:diagonal}; again, the lattice $\qf{1,2,\frac{5+\sqrt5}{2}}$ has truant $2\cdot \frac{5+\sqrt5}{2}$, but it fails to represent many other elements as well, e.g.\ $3\cdot \frac{5+\sqrt5}{2}$.
	\end{proof}
	
	As we already mentioned, a very similar situation occurs for $\Ccl_{\Q} = \Cdiag_{\Q}$ and $\CC_{\Q}$ as well as for $\CC_{S^{(p)}}$ and $\Ccl_{S^{(p)}}$ where $p$ is a prime and $S^{(p)}=\{\text{$n$ coprime to $p$}\}$, see \cite[Cor.~1.7]{BC}. Therefore we ask the following question:
	
	\begin{question} \label{qu:estion}
		How often does the situation analogous to Proposition \ref{pr:non-con}\ref*{it:noncon1} occur for a totally real number field $K$? Is there some deeper explanation why such a statement should often be true? What about $\CC_S$ or $\Ccl_S$ for suitable infinite sets $S$?
	\end{question}
	
	In a follow-up paper \cite{Kr}, we will present strong computational evidence that $\Ccl_{\qq{21}}=\{1,\ve,2,2\ve\}$, where $\ve=\frac{5+\sqrt{21}}{2}$. Assuming this is indeed the full criterion set, this field gives one example of a negative answer to the Question: The lattice $\qf{1,2,\ve}$ represents all of $\Ccl_{\qq{21}}$ except for $\amax=2\ve$, but there are many other non-represented elements, for example $1+2\ve=6+\sqrt{21}$.
	
	One can argue that the statement is true for $D=2$, $3$, $5$ simply because it is true for a few escalators; and since there are always only finitely many escalators, it is possible that in all the observed cases it happens purely by chance.
	
	Let us make one further comment regarding Proposition \ref{pr:non-con}: For $\Z[\sqrt2]$, we really have to assume Conjecture \ref{co:13lattices} (the one about $13$ specific lattices) to obtain it. Assuming directly the main Conjecture \ref{co:mainsqrt2} (the exact value of $\Ccl_{\qq{2}}$) is not enough; and the reason \emph{why} is quite interesting, and connected to Question \ref{qu:estion}. 
	
	Imagine that we \emph{know} that $3 (3\pm\sqrt2)$ are indeed the largest elements of $\Ccl_{\qq{2}}$. However, assume that the lattice $E_{4,(5)}^{(2)}$ with truant $3(3-\sqrt2)$ fails to represent some huge element $\beta$. To obtain the necessary contradiction, one would need to show that at least one of the escalations of $E_{4,(5)}^{(2)}$ will still fail to represent $\beta$; and this sound plausible but by no means certain. How do we even know that this cannot happen for some $\beta = 3(3-\sqrt2)\omega^2$?

	\bigskip
	
	Finally, let us say a few words about invariants connected with the universality criterion sets. Describing the criterion set explicitly for any family of fields seems very hard. However, there are at least three quantities connected with criteria for which it would be interesting to obtain good upper and lower bounds. These are:
	\begin{enumerate}
		\item The size of the criterion set, $\#\CC_K$,
		\item the largest norm in the criterion set, $\max\{\NN(\alpha) \mid \alpha \in \CC_K\}$,
		\item \enquote{the largest escalator}, $\min\{d \mid \text{every escalator of rank $d$ is universal}\}$.
	\end{enumerate}
	Note that the exact knowledge of $\#\CC_K$ or the knowledge of any upper bound on the second quantity yields an \enquote{effective} algorithm for computing the criterion set, since the method of escalation allows us to compute any finite part of $\CC_K$. While in \cite{KKR} it was remarked that $\#\CC_K \geq \Cdiag_K \geq m_K^{\diag}\geq m_K$ where $m_K$ is the minimal rank of a universal quadratic lattice over $K$, no upper bound on $\#\CC_K$ is known.
	
	Regarding the last quantity, let us just mention that it might be seen as the integral analogue of the field invariant $u(K)$ (one variant of which can be defined roughly as the lowest dimension for which every positive definite quadratic space over $K$ is universal). Of course there exist non-universal integral quadratic lattices of any dimension, hence one might argue that restricting the attention to escalators (which are sincerely trying to become universal in each step) is really the best possible analogue. The analogy is clearly visible from the formulation that $u(K)$ is the lowest number $d$ such that every universal positive definite quadratic space contains a universal quadratic subspace of dimension at most $d$.
	
	\smallskip
	
	The computations were carried out in Magma \cite{magma}; the programs are available upon request. The authors thank Vítězslav Kala for several helpful discussions. The second author was supported by the project PRIMUS/25/SCI/008 from Charles University.

\end{document}